\theoremstyle{plain}
\newtheorem{theorem}{Theorem}
\newtheorem{proposition}{Proposition}
\newtheorem{lemma}{Lemma}
\newtheorem{corollary}{Corollary}
\theoremstyle{definition}
\newtheorem{problem}{Problem}
\newtheorem{question}{Question}
\theoremstyle{remark}
\newtheorem{remark}{Remark}
\renewcommand*\subjclass[2][2010]{\def\@subjclass{#2}\@ifundefined{subjclassname@#1}{\ClassWarning{\@classname}{Unknown edition (#1) of Mathematics Subject Classification; using '2010'.}}{\@xp\let\@xp\subjclassname\csname subjclassname@#1\endcsname}}
\renewcommand{\subjclassname}{\textup{2010} Mathematics Subject Classification}
\begin{document}

\title[Minimal monoids generating varieties with complex subvariety lattices]{Minimal monoids generating varieties with complex subvariety lattices}
\thanks{Supported by the Ministry of Science and Higher Education of the Russian Federation (project FEUZ-2020-0016).}

\author[S. V. Gusev]{Sergey V. Gusev}
\address{Institute of Natural Sciences and Mathematics\\
Ural Federal University\\ 620000 Ekaterinburg, Russia}
\email{sergey.gusb@gmail.com}

\begin{abstract}
A variety is \textit{finitely universal} if its lattice of subvarieties contains an isomorphic copy of every finite lattice.
We show that the 6-element Brandt monoid generates a finitely universal variety of monoids and, by the previous results, it is the smallest generator for a monoid variety with this property.
It is also deduced that the join of two Cross varieties of monoids can be finitely universal.
In particular, we exhibit a finitely universal variety of monoids with uncountably many subvarieties which is the join of two Cross varieties of monoids whose lattices of subvarieties are the 6-element and the 7-element chains, respectively.
\end{abstract}

\keywords{Monoid, variety, lattice of varieties, finitely universal variety, Brandt monoid.}

\subjclass{20M07, 08B15}

\maketitle

\section{Introduction}

A \textit{variety} is a class of algebras of a fixed type that is closed under the formation of homomorphic images, subalgebras, and arbitrary direct products.
A variety is \textit{finitely based} if it can be defined by a finite set of identities, otherwise, it is \textit{non-finitely based}.
A variety is \textit{finitely generated} if it is generated by a finite algebra.
A variety is \textit{small} if it contains only finitely many subvarieties.
A finitely generated, finitely based, small variety of algebras is called a \textit{Cross variety}.
Cross varieties have been heavily investigated for many years.
For classical algebras such as groups~\cite{Oates-Powell-64}, associative rings~\cite{Kruse-73,Lvov-73}, and Lie rings~\cite{Bahturin-Olshanskii-75}, every finite member generates a Cross variety.
However, this result is not true for arbitrary algebras.
In general, the variety $\mathbb V$ generated by a finite algebra can be non-Cross in several ways, for instance, $\mathbb V$ can be non-finitely based, the lattice $\mathfrak L(\mathbb V)$ of subvarieties of $\mathbb V$ can be infinite or even uncountable, and $\mathbb V$ can be \textit{finitely universal} in the sense that $\mathfrak L(\mathbb V)$ contains an isomorphic of every finite lattice.

Examples of finitely universal varieties of semigroups have been known since the early 1970s~\cite{Burris-Nelson-71b}, and the smallest semigroup generating such a variety is of order four~\cite{Lee-07}; see Section~12 in the survey~\cite{Shevrin-Vernikov-Volkov-09} for more information.
For a long time, however, it was unknown if finitely universal varieties of monoids exist~\cite[Question 6.3]{Jackson-Lee-18}. 
The first examples of finitely universal varieties of monoids have recently been found~\cite{Gusev-Lee-20}; in fact, there also exist finitely universal varieties that are finitely generated, but an explicit smallest example have not been found; see Section~4 in the very recent survey~\cite{Gusev-Lee-Vernikov-22} for more details.
Unlike semigroups, the variety generated by any monoid of order five or less is not finitely universal~\cite{Gusev-Li-Zhang-23,Lee-Zhang-14}.
This naturally leads to the following problem.

\begin{problem}[{see \cite[Problem 4.7]{Gusev-Lee-Vernikov-22}}]
\label{prob:min-order}
Is there a monoid of order six that generates a finitely universal variety of monoids?
\end{problem}

The 6-element Brandt monoid
\[
B_2^1:=\langle a, b \mid aba = a,\,bab = b,\,aa = bb = 0\rangle\cup\{1\}
\]
is one of the most famous finite monoids.
It can be represented as the matrix semigroup
\[
\begin{tabular}{cccccc}
$\left(\begin{matrix} 0&0\\0&0\end{matrix}\right)$,
&
$\left(\begin{matrix} 1&0\\0&0\end{matrix}\right)$,
&
$\left(\begin{matrix} 0&1\\0&0\end{matrix}\right)$,
&
$\left(\begin{matrix} 0&0\\1&0\end{matrix}\right)$,
&
$\left(\begin{matrix} 0&0\\0&1\end{matrix}\right)$,
&
$\left(\begin{matrix} 1&0\\0&1\end{matrix}\right)$.
\end{tabular}
\]
The Brandt monoid $B_2^1$ is perhaps the most ubiquitous harbinger of complex behaviour in all finite semigroups. 
In particular, $B_2^1$ has no finite basis for its identities~\cite{Perkins-69} and is one of the four smallest semigroups with this property~\cite{Lee-Zhang-15}. 
It generates a monoid variety with uncountably many subvarieties~\cite{Jackson-Lee-18,Jackson-Zhang-21} and, moreover, it is the smallest generator for a monoid variety with uncountably many subvarieties~\cite{Gusev-Li-Zhang-23,Lee-Zhang-14}.

The 6-element monoid
\[
A_2^1:=\langle a, b \mid aba = a,\,bab = b,\,aa = 0,\, bb = b\rangle\cup\{1\}
\]
is one more of the most famous 6-element monoids.
It can be represented as the matrix semigroup
\[
\begin{tabular}{cccccc}
$\left(\begin{matrix} 0&0\\0&0\end{matrix}\right)$,
&
$\left(\begin{matrix} 0&1\\1&0\end{matrix}\right)$,
&
$\left(\begin{matrix} 1&0\\1&0\end{matrix}\right)$,
&
$\left(\begin{matrix} 1&0\\1&0\end{matrix}\right)$,
&
$\left(\begin{matrix} 0&1\\0&1\end{matrix}\right)$,
&
$\left(\begin{matrix} 1&0\\0&1\end{matrix}\right)$.
\end{tabular}
\]
It is well known that $A_2^1$ generates a variety properly containing that generated by $B_2^1$.
The monoid $A_2^1$ as well as the 6-element Brandt monoid $B_2^1$ plays a critical role in the theory of semigroup varieties. 
So, the following question is of fundamental interest.

\begin{problem}[{\!\cite[Question~6.2]{Gusev-Lee-20}}]
\label{prob:B21-A21}
Which, if any, of the monoids $B_2^1$ and $A_2^1$ generates a finitely universal variety?
\end{problem}

Problems~\ref{prob:min-order} and~\ref{prob:B21-A21} are addressed in the present article.
We exhibit a finitely universal monoid variety $\mathbb C$ and show that $\mathbb C$ is contained in the variety generated by the Brandt monoid $B_2^1$. 
Problems~\ref{prob:min-order} and~\ref{prob:B21-A21} are thus completely solved.

The new finitely universal variety $\mathbb C$ allows us to construct examples of two small varieties of monoids with an incredibly complex join resulting in solving the following problem.

\begin{problem}[{\!\cite[Question~6.4]{Gusev-Lee-20}}]
\label{prob:join}
\quad
\begin{itemize}
\item[\textup{(i)}] Are there varieties of monoids $\mathbb V_1$ and $\mathbb V_2$ that are not finitely universal such that the join $\mathbb V_1 \vee \mathbb V_2$ is finitely universal?
\item[\textup{(ii)}] Are there small varieties of monoids $\mathbb V_1$ and $\mathbb V_2$ such that the join $\mathbb V_1 \vee \mathbb V_2$ is finitely universal?
\end{itemize}
\end{problem}

\begin{remark}
Problem~\ref{prob:join}(i) has an affirmative answer within the context of varieties of semigroups, that is, there are two semigroup varieties that are not finitely universal such that their join is finitely universal. 
However, one of these varieties is not small, so that they do not provide an affirmative answer to Problem~\ref{prob:join}(ii) within the context of varieties of semigroups; see Section~6.3 in~\cite{Gusev-Lee-20} for more details.
\end{remark}

In fact, we not only provide an affirmative solution to Problem~\ref{prob:join}, but establish a much stronger counterintuitive result. 
Namely, we prove that there are two Cross varieties of monoids, whose lattices of subvarieties are the 6-element and the 7-element chains, respectively, such that the join of these two varieties is finitely universal and contains uncountably many subvarieties.
Moreover, we construct infinitely examples of finitely universal varieties with uncountably many subvarieties which are the join of two Cross varieties.

The article consists of five sections. 
Background information and some basic results are first given in Section~\ref{sec:prelim}.
In Section~\ref{sec:main-results} we introduce the variety $\mathbb C$, which, as we show in Section~\ref{sec:proof}, turns out to be finitely universal (Theorem~\ref{thm:C}).
Then we formulate our main results announced above (Theorems~\ref{thm:M(xzytxy)veeN} and~\ref{thm:B21-A21}) and deduce them from Theorem~\ref{thm:C}.
Section~\ref{sec:proof} is the technical core of the article; it is devoted to the proof of Theorem~\ref{thm:C}. 
We prove Theorem~\ref{thm:C} by showing that the lattice $\mathfrak{Eq}(\mathsf{A})$ of equivalence relations on every sufficiently large finite set $\mathsf{A}$ is anti-isomorphic to some subinterval of the lattice $\mathfrak L(\mathbb C)$ of subvarieties.
In view of the well-known theorem of Pudl\'ak and T$\mathring{\text{u}}$ma~\cite{Pudlak-Tuma-80} stating that every finite lattice is embeddable in a lattice of equivalence relations on a finite set, Theorem~\ref{thm:C} thus holds.
The article ends with some open problems in Section~\ref{sec:problems}.

\section{Preliminaries} 
\label{sec:prelim}

Acquaintance with rudiments of universal algebra is assumed of the reader.
Refer to the monograph~\cite{Burris-Sankappanavar-81} for more information.

\subsection{Words, identities, and deduction}

Let $\mathscr X^\ast$ denote the free monoid over a countably infinite alphabet $\mathscr X$.
Elements of~$\mathscr X$ are called \textit{variables} and elements of~$\mathscr X^\ast$ are called \textit{words}.
The \textit{content} of a word $\mathbf w$, that is, the set of all variables occurring in $\mathbf w$ is denoted by $\mathsf{con}(\mathbf w)$.
For a word $\mathbf w$ and a variable $x$, let $\mathsf{occ}_x(\mathbf w)$ denote the number of occurrences of $x$ in $\mathbf w$. 
A variable $x$ is called \textit{simple} [\textit{multiple}] \textit{in a word} $\mathbf w$ if $\mathsf{occ}_x(\mathbf w)=1$ [respectively, $\mathsf{occ}_x(\mathbf w)>1$]. 
The set of all simple [multiple] variables of a word $\mathbf w$ is denoted by $\mathsf{sim}(\mathbf w)$ [respectively, $\mathsf{mul}(\mathbf w)$]. 
A non-empty word $\mathbf w$ is called \textit{linear} if $\mathsf{con}(\mathbf w)=\mathsf{sim}(\mathbf w)$.
For any $\mathscr A\subseteq \mathsf{con}(\mathbf w)$, let $\mathbf w(\mathscr A)$ denote the word obtained by applying the substitution that fixes the variables in $\mathscr A$ and assigns the empty word~$1$ to all other variables.
Further, for any $\mathscr A\subseteq \mathsf{con}(\mathbf w)$, let $\mathbf w_{\mathscr A}:=\mathbf w(\mathsf{con}(\mathbf w)\setminus\mathscr A)$.
The expression $_{i\mathbf w}x$ means the $i$th occurrence of a variable $x$ in a word $\mathbf w$. 
If the $i$th occurrence of $x$ precedes the $j$th occurrence of $y$ in a word $\mathbf w$, then we write $({_{i\mathbf w}x}) < ({_{j\mathbf w}y})$.

An identity is written as $\mathbf u \approx \mathbf v$, where $\mathbf u,\mathbf v \in \mathscr X^\ast$; it is \textit{nontrivial} if $\mathbf u \neq \mathbf v$.
A variety $\mathbb V$ \textit{satisfies} an identity $\mathbf u \approx \mathbf v$, if for any monoid $M\in \mathbb V$ and any substitution $\varphi\colon \mathscr X \to M$, the equality $\varphi(\mathbf u)=\varphi(\mathbf v)$ holds in $M$.
An identity $\mathbf u \approx \mathbf v$ is \textit{directly deducible} from an identity $\mathbf s \approx \mathbf t$ if there exist some words $\mathbf a,\mathbf b \in \mathscr X^\ast$ and substitution $\varphi\colon \mathscr X \to \mathscr X^\ast$ such that $\{ \mathbf u,\mathbf v \} = \big\{ \mathbf a\varphi(\mathbf s)\mathbf b,\mathbf a\varphi(\mathbf t)\mathbf b \big\}$.
A nontrivial identity $\mathbf u \approx \mathbf v$ is \textit{deducible} from a set~$\Sigma$ of identities if there exists some finite sequence $\mathbf u = \mathbf w_0, \mathbf w_1, \ldots, \mathbf w_m = \mathbf v$ of distinct words such that each identity $\mathbf w_i \approx \mathbf w_{i+1}$ is directly deducible from some identity in~$\Sigma$.

\begin{proposition}[{Birkhoff's Completeness Theorem for Equational Logic; see \cite[Theorem~II.14.19]{Burris-Sankappanavar-81}}] 
\label{prop:deduction}
Let~$\mathbb V$ be the variety defined by some set~$\Sigma$ of identities.
Then~$\mathbb V$ satisfies an identity $\mathbf u \approx \mathbf v$ if and only if $\mathbf u \approx \mathbf v$ is deducible from $\Sigma$.\qed
\end{proposition}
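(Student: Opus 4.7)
The plan is to prove the two implications separately. The ``if'' direction (soundness) is a routine induction along the deduction sequence: the base case is that direct deducibility preserves satisfaction, since if $\mathbf u = \mathbf a\varphi(\mathbf s)\mathbf b$ and $\mathbf v = \mathbf a\varphi(\mathbf t)\mathbf b$ and $\mathbb V$ satisfies $\mathbf s \approx \mathbf t$, then for any $M \in \mathbb V$ and any substitution $\psi\colon \mathscr X \to M$, the composition $\psi\circ\varphi$ is itself a substitution into $M$, so $\psi(\varphi(\mathbf s)) = \psi(\varphi(\mathbf t))$; multiplying by $\psi(\mathbf a)$ on the left and $\psi(\mathbf b)$ on the right gives $\psi(\mathbf u) = \psi(\mathbf v)$. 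Concatenating the argument along an arbitrary deduction $\mathbf u = \mathbf w_0, \mathbf w_1, \ldots, \mathbf w_m = \mathbf v$ completes this direction.

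The ``only if'' direction (completeness) is the substantive one, handled by the standard free-algebra construction. I define a relation $\equiv$ on $\mathscr X^\ast$ by declaring $\mathbf w \equiv \mathbf w'$ iff either $\mathbf w = \mathbf w'$ or $\mathbf w \approx \mathbf w'$ is deducible from $\Sigma$. The first key step is to verify that $\equiv$ is a monoid congruence on $\mathscr X^\ast$. Reflexivity is immediate, symmetry is built into direct deducibility (the roles of $\mathbf s$ and $\mathbf t$ are interchangeable), and transitivity follows from concatenating deduction sequences. Compatibility with concatenation comes from the observation that if $\mathbf w_i = \mathbf c\varphi(\mathbf s)\mathbf d$ and $\mathbf w_{i+1} = \mathbf c\varphi(\mathbf t)\mathbf d$ witness a direct deduction, then $\mathbf a\mathbf w_i\mathbf b = (\mathbf a\mathbf c)\varphi(\mathbf s)(\mathbf d\mathbf b)$ and $\mathbf a\mathbf w_{i+1}\mathbf b = (\mathbf a\mathbf c)\varphi(\mathbf t)(\mathbf d\mathbf b)$ witness another direct deduction from the same identity.

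Having established that $\equiv$ is a congruence, form the quotient monoid $F := \mathscr X^\ast / {\equiv}$ with quotient map $\pi\colon \mathscr X^\ast \to F$. To see that $F \in \mathbb V$, I check that $F$ satisfies every identity $\mathbf s \approx \mathbf t \in \Sigma$: given a substitution $\psi\colon \mathscr X \to F$, lift $\psi$ through $\pi$ to some $\tilde\psi\colon \mathscr X \to \mathscr X^\ast$ with $\pi\circ\tilde\psi = \psi$; then $\tilde\psi(\mathbf s) \approx \tilde\psi(\mathbf t)$ is directly deducible from $\mathbf s \approx \mathbf t$ (take $\mathbf a = \mathbf b = 1$ and $\varphi = \tilde\psi$), so $\tilde\psi(\mathbf s) \equiv \tilde\psi(\mathbf t)$ and hence $\psi(\mathbf s) = \psi(\mathbf t)$ in $F$. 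Now the hypothesis that $\mathbb V$ satisfies $\mathbf u \approx \mathbf v$ applies to $F \in \mathbb V$ with the substitution $\pi$ itself, yielding $\pi(\mathbf u) = \pi(\mathbf v)$, which is precisely $\mathbf u \equiv \mathbf v$, i.e.\ deducibility of $\mathbf u \approx \mathbf v$ from $\Sigma$.

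The main obstacle is really the congruence verification for $\equiv$, and in particular the compatibility with concatenation; once that is cleared, the rest reduces to evaluating identities on the generic substitution $\pi$ in the quotient, a standard maneuver in universal algebra. Since the paper simply cites Burris--Sankappanavar for this classical result, there is no bespoke argument to reconstruct beyond this textbook proof.
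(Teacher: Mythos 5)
Your reconstruction is correct: both directions are the standard textbook proof of Birkhoff's completeness theorem (soundness by induction on the deduction, completeness by forming the Lindenbaum--Tarski quotient $F = \mathscr X^\ast/{\equiv}$ and applying the generic substitution), which is exactly the argument in the reference the paper cites. The paper itself gives no proof at all --- the \qed{} after the statement signals that it is quoted verbatim from Burris and Sankappanavar --- so there is no bespoke argument here to compare against.

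One small point worth spelling out, given the paper's specific formalization: the definition of deducibility in Section~2 requires the deduction sequence $\mathbf u = \mathbf w_0, \ldots, \mathbf w_m = \mathbf v$ to consist of \emph{distinct} words. When you establish transitivity of $\equiv$ by concatenating two deduction sequences, the concatenation may contain repetitions; you should add one line noting that any loop (a segment between two occurrences of the same word) can be excised without destroying the property that consecutive terms are directly deducible, and iterating this produces a sequence of distinct words, or detecting that the endpoints coincide (which your inclusion of equality in $\equiv$ already handles). The same remark applies when you prepend $\mathbf a$ and append $\mathbf b$ to every term of a sequence, but there cancellativity of $\mathscr X^\ast$ guarantees distinctness is preserved, so no further work is needed. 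With that clause supplied, the proof is complete and matches the classical argument.
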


Two sets of identities~$\Sigma_1$ and~$\Sigma_2$ are \textit{equivalent} (within a variety $\mathbb V$) if~$\Sigma_1$ and~$\Sigma_2$ define the same variety (within $\mathbb V$).

\subsection{Factor monoids}

For any set $\mathscr W$ of words, the \textit{factor monoid of $\mathscr W$}, denoted by $M(\mathscr W)$, is the monoid that consists of all factors of~$\mathscr W$ and a zero element~$0$, with multiplication~$\cdot$ given by 
\[ 
\mathbf u \cdot \mathbf v := 
\begin{cases} 
\mathbf u\mathbf v & \text{if $\mathbf u\mathbf v$ is a factor of~$\mathbf w$}, \\ 0 & \text{otherwise}; 
\end{cases} 
\] 
the empty word~$1$ is the identity element of $M(\mathscr W)$.
A word~$\mathbf w$ is an \textit{isoterm} for a variety~$\mathbb V$ if~$\mathbb V$ violates any nontrivial identity of the form $\mathbf w \approx \mathbf w^\prime$.
Given any set $\mathscr W$ of words, let $\mathbb M(\mathscr W)$ denote the variety generated by the factor monoid $M(\mathscr W)$.
One advantage in working with factor monoids is the relative ease of checking if a variety $\mathbb M(\mathscr W)$ is contained in some given variety.

\begin{lemma}[{\!\cite[Lemma~3.3]{Jackson-05}}] 
\label{lem:M(W)}
For any variety~$\mathbb V$ and any set~$\mathscr W$ of word, the inclusion $\mathbb M(\mathscr W) \subseteq \mathbb V$ holds if and only if any word in~$\mathscr W$ is an isoterm for~$\mathbb V$.
\end{lemma}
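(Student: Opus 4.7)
The plan is to prove both directions using the ``identity substitution'' that sends each variable to itself in $M(\mathscr W)$, together with a refined version that allows some variables to be sent to $0$.

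For the forward direction ($\Rightarrow$), I argue contrapositively: if some $\mathbf w \in \mathscr W$ is not an isoterm for $\mathbb V$, pick a nontrivial identity $\mathbf w \approx \mathbf w'$ of $\mathbb V$ and consider the substitution $\varphi \colon \mathscr X \to M(\mathscr W)$ defined by $\varphi(x) = x$ for $x \in \mathsf{con}(\mathbf w)$, $\varphi(x) = 0$ for $x \in \mathsf{con}(\mathbf w') \setminus \mathsf{con}(\mathbf w)$, and $\varphi(x) = 1$ otherwise. Since $\mathbf w$ is a factor of itself, $\varphi(\mathbf w) = \mathbf w \neq 0$. However, $\varphi(\mathbf w')$ is either $0$ (when some variable of $\mathbf w'$ is sent to $0$, or when $\mathbf w'$ fails to be a factor of any word in $\mathscr W$) or the word $\mathbf w'$ itself; in either case it differs from $\mathbf w$, because $\mathbf w \neq \mathbf w'$. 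Thus $M(\mathscr W) \notin \mathbb V$, contrary to the hypothesis $\mathbb M(\mathscr W) \subseteq \mathbb V$.

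For the backward direction ($\Leftarrow$), I assume every $\mathbf w \in \mathscr W$ is an isoterm for $\mathbb V$ and verify that $M(\mathscr W)$ satisfies every identity $\mathbf u \approx \mathbf v$ of $\mathbb V$ (available via Proposition~\ref{prop:deduction}). Fix a substitution $\varphi \colon \mathscr X \to M(\mathscr W)$; the only nontrivial case is $\varphi(\mathbf u) \neq 0$ (the case $\varphi(\mathbf v) \neq 0$ is symmetric, and both zero is trivial). Then $\varphi(x) \neq 0$ for all $x \in \mathsf{con}(\mathbf u)$, and $\mathbf w = \mathbf p\,\varphi(\mathbf u)\,\mathbf q$ lies in $\mathscr W$ for some $\mathbf p, \mathbf q \in \mathscr X^\ast$. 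Lift $\varphi$ to a substitution $\theta \colon \mathscr X \to \mathscr X^\ast$ by setting $\theta(x) = \varphi(x)$ (viewed as a word) when $\varphi(x) \neq 0$, and $\theta(x) = z_x$ for a fresh variable $z_x$ when $\varphi(x) = 0$, so $\theta(\mathbf u) = \varphi(\mathbf u)$ as a word. Then $\mathbf w \approx \mathbf p\,\theta(\mathbf v)\,\mathbf q$ is an identity of $\mathbb V$, obtained from $\mathbf u \approx \mathbf v$ by substitution and bracketing; since $\mathbf w$ is an isoterm, cancellation in $\mathscr X^\ast$ forces $\theta(\mathbf v) = \varphi(\mathbf u)$. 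In particular, no fresh $z_x$ appears in $\theta(\mathbf v)$, so $\varphi$ sends no variable of $\mathbf v$ to $0$, and therefore $\varphi(\mathbf v) = \theta(\mathbf v) = \varphi(\mathbf u)$ in $M(\mathscr W)$.

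The delicate point is the fresh-variable device in the backward direction: it converts the otherwise problematic possibility ``$\varphi$ kills some variable of $\mathbf v$'' into a visible structural obstruction, namely a variable of $\theta(\mathbf v)$ that does not occur in $\mathbf w$. Past this device, everything reduces to case analysis on which of $\varphi(\mathbf u), \varphi(\mathbf v)$ vanishes, combined with free-monoid cancellation.
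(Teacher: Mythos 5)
Your proof is correct, and both directions are sound. The paper does not reproduce a proof of this result --- it is quoted verbatim as Lemma~3.3 of Jackson (2005) --- so there is no in-paper argument to compare against. Your forward direction uses the standard ``evaluate by the identity substitution inside $M(\mathscr W)$'' idea, and correctly sends the new variables of $\mathbf w'$ to $0$ rather than $1$, which is exactly what is needed to rule out the case where $\mathbf w'$ is $\mathbf w$ with extra variables interleaved. Your backward direction is the delicate one, and the fresh-variable lifting $\theta$ is a clean device: it converts ``$\varphi$ kills some variable of $\mathbf v$'' into ``a variable of $\theta(\mathbf v)$ does not occur in $\mathbf w$,'' which is directly incompatible with the conclusion $\varphi(\mathbf u)=\theta(\mathbf v)$ forced by the isoterm property and free-monoid cancellation. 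The step where you pass from $\mathbf w=\mathbf p\,\theta(\mathbf v)\,\mathbf q$ in $\mathscr X^\ast$ to $\varphi(\mathbf u)=\theta(\mathbf v)$ correctly uses that $\mathscr X^\ast$ is cancellative. This is in the spirit of Jackson's original argument and would be an acceptable replacement for the citation. One small stylistic remark: in the backward direction you can simply say ``if $\varphi(\mathbf u)=\varphi(\mathbf v)=0$ we are done; otherwise by symmetry assume $\varphi(\mathbf u)\ne 0$,'' since your argument then derives $\varphi(\mathbf v)\ne 0$ rather than needing to consider it as a separate case.
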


\section{Main results}
\label{sec:main-results}

\subsection{The variety $\mathbb C$.}

Here we introduce the variety $\mathbb C$, which, as we prove in Section~\ref{sec:proof}, is finitely universal.
All other finitely universal varieties in this article contain it.
We need some notation.
We denote by $S_k$ the symmetric group on the set $\{1,2,\dots,k\}$. 
As usual, $S_k^n$ denote the $n$th direct power of $S_k$.
If $\xi\in S_k^n$, then we denote by $\xi_i$ the $i$th component of $\xi$.
For any $n\ge2$ and $\xi\in S_2^n$, we define the word:
\[
\mathbf w_{\xi} := \mathbf p\,\biggl(\prod_{i=1}^na_i\biggr)\,a\, \biggl(\prod_{i=1}^nx_{1\xi_i}^{(i)}x_{2\xi_i}^{(i)}\biggr)\,b\,\biggl(\prod_{i=1}^nb_i\biggr)\, \mathbf q\mathbf r,
\]
where
\begin{align}
\label{eq:p=}
&\mathbf p := \biggl(\prod_{i=1}^n z_it_i\biggr)\biggl(\prod_{i=1}^nz_i^{\prime}t_i^{\prime}\biggr)\biggl(\prod_{i=1}^nz_i^{\prime\prime}t_i^{\prime\prime}\biggr),\\ 
\label{eq:q=}
&\mathbf q := \biggl(\prod_{i=0}^n s_iy_i\biggr)t,\\
\label{eq:r=}
&\mathbf r :=by_0\biggl(\prod_{i=1}^n x_1^{(i)}\,z_i a_iz_i^{\prime} b_iz_i^{\prime\prime} \,x_2^{(i)}\, y_i\biggr)a.
\end{align}
For any $n\ge2$, we denote by $\mathscr W_n$ the set of all words of the form $\mathbf w_{\xi}$ with $\xi\in S_2^n$.
Evidently, $|\mathscr W_n|=|S_2^n|=2^n$.

\begin{theorem}
\label{thm:C}
The variety $\mathbb C:= \mathbb M(\{\mathscr W_n\mid n\ge2\})$ is finitely universal.
\end{theorem}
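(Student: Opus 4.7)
The plan is to realise the strategy flagged at the end of the introduction: for every sufficiently large $n$, construct a lattice anti-embedding of $\mathfrak{Eq}_n := \mathfrak{Eq}(\{1,\dots,n\})$ into an interval of $\mathfrak L(\mathbb C)$. Once this is done, applying the Pudl\'ak--T\r{u}ma theorem to the dual $L^{\mathrm{op}}$ of any finite lattice $L$ yields an embedding of $L^{\mathrm{op}}$ into some $\mathfrak{Eq}_n$; composed with the anti-embedding, this becomes an embedding of $L$ itself into $\mathfrak L(\mathbb C)$, so that $\mathbb C$ is finitely universal.

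To each $\rho\in\mathfrak{Eq}_n$ one assigns a subvariety $\mathbb V_\rho\subseteq\mathbb C$ cut out, inside $\mathbb C$, by a carefully chosen family of identities of the form $\mathbf w_\xi\approx\mathbf w_\eta$ with $\mathbf w_\xi,\mathbf w_\eta\in\mathscr W_n$. The natural candidate exploits the group structure of $S_2^n\cong(\mathbb Z/2)^n$: to $\rho$ one associates a subgroup of $(\mathbb Z/2)^n$---for instance, the elements summing to zero on every $\rho$-block---and identifies $\mathbf w_\xi$ with $\mathbf w_\eta$ exactly when $\xi\cdot\eta^{-1}$ lies in this subgroup. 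The design of $\mathbf w_\xi$, whose $\xi$-dependence is concentrated in the central factor $\prod_{i=1}^n x_{1\xi_i}^{(i)}x_{2\xi_i}^{(i)}$ flanked by $\xi$-independent scaffolding $\mathbf p,\mathbf q,\mathbf r$, is engineered so that the identities $\mathbf w_\xi\approx\mathbf w_\eta$ implement exactly such coset-compatible identifications and no more. What must then be verified is: (i) the assignment $\rho\mapsto\mathbb V_\rho$ is order-reversing; (ii) it preserves lattice operations antitonically, i.e.\ $\mathbb V_{\rho\wedge\sigma}=\mathbb V_\rho\vee\mathbb V_\sigma$ and $\mathbb V_{\rho\vee\sigma}=\mathbb V_\rho\wedge\mathbb V_\sigma$; and (iii) it is injective, so that distinct $\rho,\sigma$ are separated by some identity $\mathbf w_\xi\approx\mathbf w_\eta$.

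The main obstacle, subsuming both (ii) and (iii), is the precise characterisation of which identities between words of $\mathscr W_n$ hold in $\mathbb V_\rho$: one must show that these are exactly those directly imposed, with no unintended deductions arising. By Proposition~\ref{prop:deduction} and Lemma~\ref{lem:M(W)}, this amounts to tracking direct deductions from the generating identities under the constraint that every word in $\bigcup_{m\ge2}\mathscr W_m$ remains an isoterm. The elaborate scaffolding of $\mathbf w_\xi$---the triple replication of index variables via $z_i,z_i',z_i''$ and $t_i,t_i',t_i''$ in $\mathbf p$, the linear tail $\mathbf q$, and the ordered block $\mathbf r$ that locks each pair $x_1^{(i)},x_2^{(i)}$ around $z_ia_iz_i'b_iz_i''$---appears crafted to render any admissible substitution essentially rigid on the indices $1,\dots,n$, forcing every direct deduction step to act only as a coset-compatible move on the $\xi$-tuple. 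Ruling out exotic substitutions and driving every deduction to its final form is the combinatorial heart of Section~\ref{sec:proof}.
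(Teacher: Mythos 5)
Your high-level strategy matches the paper's: send equivalence relations to subvarieties cut out by identities of the form $\mathbf w_\xi\approx\mathbf w_\eta$, exploit the rigidity of the scaffolding $\mathbf p,\mathbf q,\mathbf r$ to control deductions, and finish with Pudl\'ak--T\r{u}ma applied to the dual lattice. But there are two points where your plan diverges, and one of them is a genuine gap.

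The minor point first: your detour through subgroups of $(\mathbb Z/2)^n$ is unnecessary and actually wasteful. The paper simply takes \emph{all} equivalence relations on the $2^n$-element set $\mathscr W_n$ and maps $\pi\mapsto\mathbb M(\mathscr W_n)\{\mathsf{Id}(\pi)\}$; the natural inclusion of $\mathfrak{Eq}(\{1,\dots,k\})$ into $\mathfrak{Eq}(\mathscr W_n)$ for $k\le 2^n$ (extend by singleton classes) then yields an anti-embedding of $\mathfrak{Eq}(\{1,\dots,k\})$ for all $k$ up to $2^n$, whereas your coset construction encodes only $\mathfrak{Eq}(\{1,\dots,n\})$. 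You get exponentially less lattice per fixed $n$, for more bookkeeping.

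The serious point is your item (ii). You claim that preservation of lattice operations is subsumed by ``the precise characterisation of which identities between words of $\mathscr W_n$ hold in $\mathbb V_\rho$.'' That characterisation (the paper's Corollary~\ref{cor:FIC(M(W_n)Id(pi))-class}) gives you injectivity and the fact that $\pi\mapsto\mathbb V_\pi$ is an order anti-isomorphism onto its \emph{image as a poset}. It does \emph{not} give you that the image is a \emph{sublattice} of $\mathfrak L(\mathbb C)$. Concretely, the easy inclusion $\mathbb V_\rho\vee\mathbb V_\sigma\subseteq\mathbb V_{\rho\wedge\sigma}$ is fine, but the reverse inclusion asks that every identity holding in both $\mathbb V_\rho$ and $\mathbb V_\sigma$ --- including identities not of the form $\mathbf w_\xi\approx\mathbf w_\eta$ --- also holds in $\mathbb V_{\rho\wedge\sigma}$; knowing only how $\mathscr W_n$-word identities behave says nothing about this. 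The paper closes exactly this gap by proving the far stronger Proposition~\ref{prop:mapping}: the map $\Phi$ is \emph{surjective onto the entire interval} $[\mathbb M(\mathscr W_n)\{\mathsf{Id}(\upsilon_{\mathscr W_n})\},\mathbb M(\mathscr W_n)]$. Surjectivity onto an interval makes the image trivially a sublattice, and an order anti-isomorphism between two lattices is then automatically a lattice anti-isomorphism. Establishing surjectivity is the real technical core of Section~\ref{sec:proof} (the reduction to reduced identities via the auxiliary variety $\mathbb O$ and Lemma~\ref{lem:O}, the induction on invertibility degree, Lemmas~\ref{lem:three-isotrems}--\ref{lem:1x-1c1-1c2-1y}); your proposal neither mentions surjectivity nor offers an alternative route to the sublattice property, so as written the lattice anti-embedding is not established.
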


\begin{remark}
Recall that a variety is \textit{periodic} if it satisfies the identity $x^{m+k} \approx x^m$ for some $m,k \geq 1$; in this case, the number~$m$ is the \textit{index} of the variety.
Varieties of index~1 are \textit{completely regular}, that is, consist of unions of groups.
The lattice of subvarieties of every completely regular variety of semigroups and, therefore, monoids is modular and moreover, Arguesian; this fundamental result was established in three different ways by Pastijn~\cite{Pastijn-90,Pastijn-91} and Petrich and Reilly~\cite{Petrich-Reilly-90} (see also Section~5.3 in the survey~\cite{Gusev-Lee-Vernikov-22}).
Thus, varieties of index~1 are not finitely universal.
For each $m \ge 3$, an example of a finitely universal variety of index $m$ was found in~\cite{Gusev-Lee-20}.
As for varieties of index~2, a finitely universal example was unknown so far; see~\cite[Question~6.1]{Gusev-Lee-20} or~\cite[Question~4.10]{Gusev-Lee-Vernikov-22}.
It is easy to see that the variety $\mathbb C$ satisfies the identity $x^2\approx x^3$ and so is of index~2. 
Thus, Theorem~\ref{thm:C} provides an example of a finitely universal variety of monoids of index~2.
\end{remark}

The proof of Theorem~\ref{thm:C} is given in Section~\ref{sec:proof}.
For the rest of Section~\ref{sec:main-results}, we discuss our main results and show how to deduce them from Theorem~\ref{thm:C}.

\subsection{The join of two Cross varieties}

Let $\mathbb N$ denote the variety defined by the identities 
\begin{equation}
\label{eq:five identities}
x^2\approx x^3,\ x^2y\approx yx^2,\ xyxzx\approx x^2yz,\ xzxyty\approx xzyxty,\ xzytxy\approx xzytyx.
\end{equation}
It is verified in~\cite[Theorem~1.1]{Gusev-19} that the lattice $\mathfrak L(\mathbb M(xzytxy)\vee\mathbb N)$ is as shown in Fig.~\ref{fig:L(M(xzytxy)veeN)}, where $\mathbb T$ is the variety of all trivial monoids and the interval $[\mathbb M(xzytxy), \mathbb M(xzytxy)\vee\mathbb N]$ contains uncountably many varieties.
In particular, the lattices $\mathfrak L(\mathbb M(xzytxy))$ and $\mathfrak L(\mathbb N)$ are the 6-element and the 7-element chains, respectively.
The following counterintuitive result provides a complete solution to Problem~\ref{prob:join}.

\begin{figure}[htb]
\unitlength=1mm
\linethickness{0.4pt}
\begin{center}
\begin{picture}(60,80)
\put(7,63){\circle*{1.33}}
\put(17,53){\circle*{1.33}}
\put(17,73){\circle*{1.33}}
\put(27,3){\circle*{1.33}}
\put(27,13){\circle*{1.33}}
\put(27,23){\circle*{1.33}}
\put(27,33){\circle*{1.33}}
\put(27,43){\circle*{1.33}}
\put(27,63){\circle*{1.33}}
\put(37,53){\circle*{1.33}}
\gasset{AHnb=0,linewidth=0.4}
\drawline(27,3)(27,43)(7,63)(17,73)
\drawline(27,43)(37,53)(27,63)(17,53)
\put(22,68)
{
\begin{rotate}{-47}
\drawoval[fillgray=0.7](-0.85,-0.55,12.7,4,2)
\end{rotate}
}
\gasset{AHnb=1,AHLength=2,linewidth=0.4}
\drawline(36,76)(22,68)
\put(27,0){\makebox(0,0)[cc]{$\mathbb T$}}
\put(28,13){\makebox(0,0)[lc]{$\mathbb M(\emptyset)$}}
\put(28,23){\makebox(0,0)[lc]{$\mathbb M(x)$}}
\put(28,33){\makebox(0,0)[lc]{$\mathbb M(xy)$}}
\put(28,42){\makebox(0,0)[lc]{$\mathbb M(xyx)$}}
\put(6,63){\makebox(0,0)[rc]{$\mathbb N$}}
\put(17,77){\makebox(0,0)[cc]{$\mathbb M(xzytxy)\vee\mathbb N$}}
\put(16,52){\makebox(0,0)[rc]{$\mathbb M(xyzxty)$}}
\put(29,64){\makebox(0,0)[lc]{$\mathbb M(xyzxty,xzytxy)$}}
\put(38,53){\makebox(0,0)[lc]{$\mathbb M(xzytxy)$}}
\put(37,78){\makebox(0,0)[lc]{\emph{uncountably}}}
\put(37,74){\makebox(0,0)[lc]{\emph{many varieties}}}
\end{picture}
\end{center}
\caption{The lattice $\mathfrak L(\mathbb M(xzytxy)\vee\mathbb N)$}
\label{fig:L(M(xzytxy)veeN)}
\end{figure}
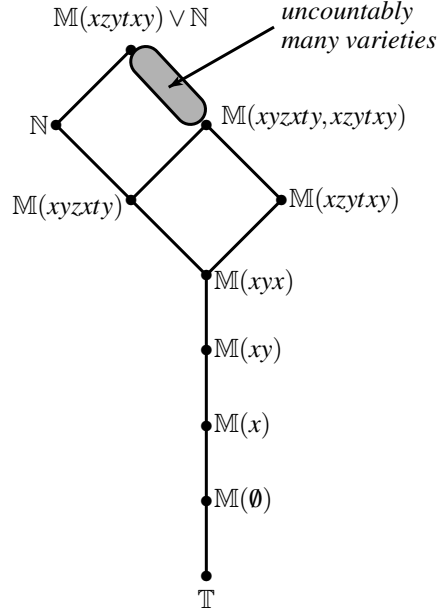

\begin{theorem}
\label{thm:M(xzytxy)veeN}
There are two Cross varieties of monoids such that the join of these varieties is finitely universal and contains uncountably many subvarieties. 
Namely, the varieties $\mathbb M(xzytxy)$ and $\mathbb N$ satisfy this property.
\end{theorem}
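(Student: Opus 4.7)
The plan is to derive the theorem from Theorem~\ref{thm:C} by verifying the inclusion
\[
\mathbb{C} \subseteq \mathbb{M}(xzytxy) \vee \mathbb{N}.
\]
Before addressing this, I note that the auxiliary clauses of the statement --- that $\mathbb{M}(xzytxy)$ and $\mathbb{N}$ are Cross varieties with the $6$-element and $7$-element chain lattices indicated in Fig.~\ref{fig:L(M(xzytxy)veeN)}, and that $\mathfrak{L}(\mathbb{M}(xzytxy) \vee \mathbb{N})$ is uncountable --- can all be read directly off the description of this lattice established in~\cite{Gusev-19}. Since finite universality is inherited by supervarieties (the lattice $\mathfrak{L}(\mathbb{C})$ is simply the interval $[\mathbb{T},\mathbb{C}]$ inside $\mathfrak{L}(\mathbb{M}(xzytxy) \vee \mathbb{N})$), Theorem~\ref{thm:C} reduces the finite-universality assertion to the displayed inclusion.

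By Lemma~\ref{lem:M(W)}, this inclusion holds if and only if every word $\mathbf{w}_\xi$ with $n \ge 2$ and $\xi \in S_2^n$ is an isoterm for the join. Because a variety $\mathbb{V}_1 \vee \mathbb{V}_2$ satisfies an identity exactly when both $\mathbb{V}_1$ and $\mathbb{V}_2$ do, this amounts to the following statement: whenever an identity $\mathbf{w}_\xi \approx \mathbf{w}'$ holds simultaneously in $\mathbb{M}(xzytxy)$ and in $\mathbb{N}$, we must have $\mathbf{w}' = \mathbf{w}_\xi$. I would attack this by starting from the $\mathbb{N}$-side: by Proposition~\ref{prop:deduction}, any such $\mathbf{w}'$ arises from $\mathbf{w}_\xi$ through a finite sequence of direct deductions from~\eqref{eq:five identities}, and one then analyses which deductions are actually possible.

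The five defining identities have very different effects on $\mathbf{w}_\xi$. The identities $x^2 \approx x^3$, $x^2y \approx yx^2$, and $xyxzx \approx x^2yz$ all require adjacent or nearly adjacent repeated occurrences of a variable, which $\mathbf{w}_\xi$ does not exhibit except via substitutions that collapse distinct variables; the many simple ``separator'' variables $t_i, t'_i, t''_i, s_i, t$ distributed through $\mathbf{p}$ and $\mathbf{q}$ obstruct such collapses from passing undetected. The critical identities are the last two, $xzxyty \approx xzyxty$ and especially $xzytxy \approx xzytyx$; the word $\mathbf{w}_\xi$ has been engineered so that the only nontrivial applications of these must swap some pair $x_{1\xi_i}^{(i)}\,x_{2\xi_i}^{(i)}$ in the middle block, which in turn deforms a specific $xzytxy$-shaped infix inside $\mathbf{r}$ (built from the second occurrences of $x_1^{(i)}, x_2^{(i)}$, the separator $z_i a_i z'_i b_i z''_i$, and the boundary variables $a,b$). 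Since $xzytxy$ is an isoterm for $\mathbb{M}(xzytxy)$, any such deformation yields an identity refuted in $\mathbb{M}(xzytxy)$, hence also in the join.

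The main obstacle, as I see it, is the combinatorial bookkeeping in the last step: exhaustively matching each admissible $\mathbb{N}$-deduction with a broken $xzytxy$-isoterm detectable in $\mathbb{M}(xzytxy)$. This is where the specific interleaving of $\mathbf{p}$, the $\xi$-controlled middle block, and $\mathbf{q}\mathbf{r}$ must be used in full, and it is likely to rely on an intermediate normal-form lemma describing the identities of $\mathbb{N}$ that can possibly carry $\mathbf{w}_\xi$ to another word. The high-level deduction from Theorem~\ref{thm:C} is conceptually short, but making this isoterm analysis airtight will form the technical heart of the argument.
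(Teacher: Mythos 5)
Your high-level framework is the same as the paper's: the Cross-ness of $\mathbb M(xzytxy)$ and $\mathbb N$ and the uncountability of the join are imported from \cite{Gusev-19} and the Erratum to \cite{Jackson-05}, finite universality is inherited upward, so everything reduces via Lemma~\ref{lem:M(W)} to showing that each $\mathbf w_\xi\in\mathscr W_n$ is an isoterm for $\mathbb M(xzytxy)\vee\mathbb N$. The divergence is in how that isoterm claim is established, and your chosen route has a genuine gap. You propose to start from the $\mathbb N$-side and classify the direct deductions from the defining identities~\eqref{eq:five identities} that can be applied to $\mathbf w_\xi$. But Proposition~\ref{prop:deduction} yields a finite \emph{chain} $\mathbf w_\xi=\mathbf v_0,\mathbf v_1,\dots,\mathbf v_m=\mathbf w'$, and your discussion (``$\mathbf w_\xi$ does not exhibit adjacent repeated occurrences'', ``the only nontrivial applications must swap some pair $x_{1\xi_i}^{(i)}x_{2\xi_i}^{(i)}$'') inspects only the first step, i.e.\ the effect of a single deduction on $\mathbf w_\xi$ itself. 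The intermediate words $\mathbf v_i$ need not be square-free, need not preserve the block structure, and may well admit applications of $x^2\approx x^3$, $x^2y\approx yx^2$, or $xyxzx\approx x^2yz$ that you declared irrelevant. To make your route airtight you would need an induction on $m$ controlling the whole chain (as the paper does in the much harder Lemmas~\ref{lem:u_{c,h}}--\ref{lem:1x-1c1-1c2-1y} of Section~\ref{sec:proof}, but those are for a different variety). You do not supply or even signal such an induction.

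The paper bypasses this entirely by attacking first from the $\mathbb M(xzytxy)$-side using Lemma~\ref{lem:FIC(M(xzytxy))-class}, which is a short consequence of a pre-existing structural result (Lemma~3.1 in \cite{Gusev-Vernikov-21}) characterizing $\mathbb M(xzytxy)$-identities on words of this shape. This forces any $\mathbf w'$ with $\mathbb M(xzytxy)\vee\mathbb N\models \mathbf w_\xi\approx\mathbf w'$ to be of the rigid form $\mathbf p\mathbf w''\mathbf q\mathbf r$ with $\mathbf w''$ linear on the prescribed alphabet and with $\mathbf r$ itself (not merely a word of the same content) recovered via evaluations of the type $\mathbf u(b,s_0,t,y_0)$. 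Only then is $\mathbb N$ invoked, and only through a one-line substitution argument: for each pair $x,y\in\mathsf{con}(\mathbf w'')$, evaluating at $(x,y,t)$ gives $\mathbf w_\xi(x,y,t)=xyt\mathbf a$ with $\mathbf a\in\{xy,yx\}$, and since $\mathbb N$ refutes $xyt\mathbf a\approx yxt\mathbf a$, the order of first occurrences in $\mathbf w''$ is pinned down, yielding $\mathbf w'=\mathbf w_\xi$. No deduction-chain analysis for $\mathbb N$ is required. So the missing ingredient in your sketch is precisely this structural lemma; without it, your ``technical heart'' is not merely bookkeeping but a full induction over $\mathbb N$-deductions that your outline does not begin to supply.
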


The proof of Theorem~\ref{thm:M(xzytxy)veeN} requires one intermediate result.

\begin{lemma}
\label{lem:FIC(M(xzytxy))-class}
Let $\mathbf u \approx \mathbf v$ be an identity of $\mathbb M(xzytxy)$.
Suppose that $\mathbf u \in \mathscr W_n$.
Then $\mathbf v=\mathbf p\mathbf v^\prime \mathbf q\mathbf r$, where the words $\mathbf p$, $\mathbf q$ and $\mathbf r$ are defined by the equalities~\eqref{eq:p=},~\eqref{eq:q=} and~\eqref{eq:r=}, respectively, while $\mathbf v^\prime$ is a linear word with $\mathsf{con}(\mathbf v^\prime)=\{a,a_i,b,b_i,x_1^{(i)},x_2^{(i)}\mid 1\le i\le n\}$.
\end{lemma}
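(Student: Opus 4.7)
The plan is to combine three basic preservation properties of identities of $\mathbb M(xzytxy)$ with one key rigidity pattern, and then apply these to the specific structure of $\mathbf u = \mathbf w_\xi$. For the preservation properties: the identities $x \approx 1$, $x \approx xy$, and $x \approx x^k$ for $k \ge 2$ all fail in $\mathbb M(xzytxy)$ (since $M(xzytxy)$ is nontrivial and $x^2 = 0 \ne x$ in $M(xzytxy)$), which gives content preservation and preservation of simpleness of each variable. Because $xy$ is a factor of $xzytxy$ and hence an isoterm for $\mathbb M(xzytxy)$ (by Lemma~\ref{lem:M(W)}), projecting to any two simple variables and invoking this isotermness shows that the linear order of simple variables in $\mathbf u$ is inherited by $\mathbf v$.

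The key rigidity exploits the nonzero factorization $x \cdot zyt \cdot x = xzytx$ in $M(xzytxy)$. For any variables $\alpha,\beta$ with $\alpha$ simple and $\beta$ occurring exactly twice in $\mathbf u$, the substitution sending $\beta$ to the length-one factor $x$ and $\alpha$ to the length-three factor $zyt$ evaluates $\beta\alpha\beta$ to the nonzero element $xzytx$ of $M(xzytxy)$, while it sends both $\alpha\beta\beta$ and $\beta\beta\alpha$ to $0$ (since $x^2 = 0$). Hence if $\mathbf u(\{\alpha,\beta\}) = \beta\alpha\beta$ then $\mathbf v(\{\alpha,\beta\}) = \beta\alpha\beta$. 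Applied to $\mathbf u = \mathbf w_\xi$---where every multiple variable occurs exactly twice---the preserved order of simple variables $t_1,\dots,t_n,t_1',\dots,t_n',t_1'',\dots,t_n'',s_0,\dots,s_n,t$, together with this ``between'' rigidity for each pair (simple, multiple), iteratively pins down the first occurrence of each $z_i$, $z_i'$, $z_i''$ in $\mathbf v$ strictly between the appropriate $t$-anchors, forcing the prefix of $\mathbf v$ of length $|\mathbf p|$ to coincide with $\mathbf p$. A symmetric argument on the suffix, using the $y_i$'s anchored by $s_j$ and $t$ and the fine interleaved structure of $\mathbf r$, forces the last $|\mathbf q\mathbf r|$ letters of $\mathbf v$ to coincide with $\mathbf q\mathbf r$. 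The middle factor $\mathbf v'$ then has content $\{a,b,a_i,b_i,x_1^{(i)},x_2^{(i)}:1\le i\le n\}$ by content preservation; it is linear because the total occurrence count of each twice-occurring variable is also preserved (e.g.\ $xyx\not\approx xyx^2$, since the right-hand side evaluates to $0$ under the same substitution).

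The main obstacle will be the combinatorial bookkeeping in the third step. When several multiple variables compete for placement between the same two consecutive simple variables, the basic pairwise rigidity (which distinguishes only $\beta\alpha\beta$ from $\alpha\beta\beta$ and $\beta\beta\alpha$) is insufficient, and one must appeal to triple projections or to finer substitutions (for instance $\alpha\mapsto zy$, $\beta\mapsto x$, with an auxiliary variable mapped to $t$) to resolve the remaining ambiguities. This delicate bookkeeping---especially inside the three generations $\prod z_it_i$, $\prod z_i't_i'$, $\prod z_i''t_i''$ of $\mathbf p$ and inside the inner product of $\mathbf r$---is the principal technical content of the proof.
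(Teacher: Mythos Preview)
Your from-scratch approach via the $xyx$-isoterm is sound for pinning down $\mathbf p$, $\mathbf q$, and the block structure: once content, multiplicity, and the order of simple variables are preserved, the rigidity $\beta\alpha\beta\Rightarrow\beta\alpha\beta$ places each occurrence of each multiple variable in the correct slot between consecutive simple variables, and for $\mathbf p$ and $\mathbf q$ each slot contains exactly one such occurrence. This is more elementary than the paper's route, which simply cites Lemma~3.1 of \cite{Gusev-Vernikov-21} to obtain $\mathbf v=\mathbf p\mathbf v'\mathbf q\mathbf r'$ with $\mathbf r'$ an as-yet-unordered linear word on $\mathsf{con}(\mathbf r)$.

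The gap is in your handling of $\mathbf r$. You call it ``a symmetric argument on the suffix'', but it is \emph{not} symmetric to the $\mathbf p$ argument: every letter of $\mathbf r$ is a second occurrence and $\mathbf r$ contains no simple variables at all, so your $\beta\alpha\beta$ rigidity says nothing about the internal order of $\mathbf r'$. The tool the paper actually uses here is the full $xzytxy$-isoterm: for two multiple variables $X,Y$ one projects to $\{X,Y,s,t\}$ with suitable simple $s,t$ so that one of $\mathbf u(\{X,Y,s,t\})$ or the putative wrong-order $\mathbf v(\{X,Y,s,t\})$ is a renaming of $xzytxy$; since $xzytxy$ is an isoterm for $\mathbb M(xzytxy)$, the order of $X,Y$ in $\mathbf r'$ is forced (this is the paper's computation $\mathbf u(b,s_0,t,y_0)=bs_0y_0tby_0$ and its ``similar arguments''). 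For pairs $X,Y$ whose first occurrences both lie in the middle block---where no simple variable separates them---one must chain through the $z_i,z_i',z_i'',y_i$, whose first occurrences are separated by the $t_i$'s and $s_i$'s. Your suggested ``finer substitutions'' $\alpha\mapsto zy$, $\beta\mapsto x$, auxiliary $\mapsto t$ do not reach this: with only three variables one cannot realise a nonzero value distinguishing the orders, and indeed $M(xzytxy)$ satisfies $xytxy\approx yxtxy\approx xytyx$, so three-variable projections cannot order second occurrences. The missing idea is precisely the four-variable projection onto two multiple and two simple variables yielding a copy of $xzytxy$.
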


\begin{proof}
In view of Lemma~3.1 in~\cite{Gusev-Vernikov-21}, $\mathbf v=\mathbf p\mathbf v^\prime \mathbf q\mathbf r^\prime$, where the words $\mathbf p$ and $\mathbf q$ are defined by the equalities~\eqref{eq:p=} and~\eqref{eq:q=}, respectively, while $\mathbf v^\prime$ and $\mathbf r^\prime$ are linear words with $\mathsf{con}(\mathbf v^\prime)=\{a,a_i,b,b_i,x_1^{(i)},x_2^{(i)}\mid 1\le i\le n\}$ and $\mathsf{con}(\mathbf r)=\mathsf{con}(\mathbf r^\prime)$.
Since $\mathbf u(b,s_0,t,y_0)=bs_0y_0tby_0$ and the identity $\mathbf u \approx \mathbf v$ is satisfied by $\mathbb M(xzytxy)$, the word $\mathbf v(b,s_0,t,y_0)$ must coincide with $bs_0y_0tby_0$. 
Therefore, $(_{1\mathbf r^\prime}b)<(_{1\mathbf r^\prime}y_0)$.
By a similar argument we can show that all the variables occur in $\mathbf r^\prime$ in the same order as in $\mathbf r$ and, therefore, $\mathbf r^\prime=\mathbf r$.
\end{proof}

\begin{proof}[Proof of Theorem~\ref{thm:M(xzytxy)veeN}]
It is shown in the Erratum to~\cite{Jackson-05} that the variety $\mathbb M(xzytxy)$ is finitely based.
In view of this fact and Fig.~\ref{fig:L(M(xzytxy)veeN)}, $\mathbb M(xzytxy)$ is a Cross variety.
A finite generator for the variety $\mathbb N$ is also exhibited in the Erratum to~\cite{Jackson-05}.
Thus, $\mathbb N$ is also a Cross variety.

Let $n\ge 2$, $\xi\in S_2^n$ and $\mathbf w_\xi \approx \mathbf w$ be an identity of $\mathbb M(xzytxy)\vee\mathbb N$.
It follows from Lemma~\ref{lem:FIC(M(xzytxy))-class} that $\mathbf w=\mathbf p\mathbf w^\prime \mathbf q\mathbf r$, where the words $\mathbf p$, $\mathbf q$ and $\mathbf r$ are defined by the equalities~\eqref{eq:p=},~\eqref{eq:q=} and~\eqref{eq:r=}, respectively, while $\mathbf w^\prime$ is a linear word with $\mathsf{con}(\mathbf w^\prime)=\{a,a_i,b,b_i,x_1^{(i)},x_2^{(i)}\mid 1\le i\le n\}$.
Further, consider arbitrary $x,y\in\mathsf{con}(\mathbf w^\prime)$ with $({_{1\mathbf w_\xi}x})<({_{1\mathbf w_\xi}y})$.
Then $\mathbf w_\xi(x,y,t)=xyt\mathbf a$ with $\mathbf a\in\{xy,yx\}$.
Since $\mathbb N$ violates $xyt\mathbf a \approx yxt\mathbf a$, it follows that $({_{1\mathbf w}x})<({_{1\mathbf w}y})$.
Therefore,
\[
\mathbf w^\prime=\biggl(\prod_{i=1}^na_i\biggr)\,a\, \biggl(\prod_{i=1}^nx_{1\xi_i}^{(i)}x_{2\xi_i}^{(i)}\biggr)\,b\,\biggl(\prod_{i=1}^nb_i\biggr)
\]
and so $\mathbf w=\mathbf w_\xi$.
We have proved that every word in $\mathscr W_n$ is an isoterm for $\mathbb M(xzytxy)\vee\mathbb N$. 
Thus, $\mathbb C\subseteq\mathbb M(xzytxy)\vee\mathbb N$ by Lemma~\ref{lem:M(W)}.
Now Theorem~\ref{thm:C} applies, yielding that the variety $\mathbb M(xzytxy)\vee\mathbb N$ is finitely universal.
Finally, $\mathbb M(xzytxy)\vee\mathbb N$ contains uncountably many subvarieties by~\cite[Theorem~1.1]{Gusev-19}.
\end{proof}

\begin{remark}
Theorem~\ref{thm:M(xzytxy)veeN} implies that a cover of a Cross variety of monoids can be finitely universal. 
In contrast, it is unknown whether or not the similar result holds within the context of varieties of semigroups.
Although it is known that the class of Cross semigroup varieties is closed under neither joins nor covers~\cite{Sapir-91}.
\end{remark}
 
For a monoid $K$, we denote by $\mathbb K$ the variety generated by $K$.
We have the following result on the join $\mathbb M(xyx) \vee \mathbb G$ for a group $G$ of finite exponent. 

\begin{corollary}
\label{cor:M(xyx)veeG}
Let $G$ be a group of finite exponent which does not satisfy the identities
\begin{equation}
\label{eq:two-identities}
xyzxy\approx yxzyx\  \text{ and }\ xyzyx\approx yxzxy.
\end{equation}
Then $\mathbb M(xyx) \vee \mathbb G$ is a non-finitely based finitely universal variety with uncountably many subvarieties.
\end{corollary}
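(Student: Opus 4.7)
The plan is to reduce the corollary to Theorem~\ref{thm:C} via the inclusion $\mathbb C \subseteq \mathbb M(xyx)\vee\mathbb G$. By Lemma~\ref{lem:M(W)}, this amounts to showing that every word $\mathbf w_\xi\in\bigcup_{n\ge2}\mathscr W_n$ is an isoterm for $\mathbb M(xyx)\vee\mathbb G$; so let $\mathbf w_\xi\approx\mathbf v$ be an identity of the join, satisfied by both $\mathbb M(xyx)$ and $\mathbb G$, and I aim to deduce $\mathbf v = \mathbf w_\xi$.

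First, exploit the $\mathbb M(xyx)$ factor. Because $xyx$ is an isoterm for $\mathbb M(xyx)$, this variety preserves, for every pair of variables, their scattered $\{x,y\}$-subpatterns of length at most three, and in particular pins simple variables against the multiple variables adjacent to them. A bookkeeping argument in the style of Lemma~\ref{lem:FIC(M(xzytxy))-class} --- isolating the simple variables $t_i,t_i',t_i'',s_i,t$ via $\{x,y\}$-projections and chasing first and last occurrences block by block --- then forces $\mathbf v = \mathbf p\,\mathbf v'\,\mathbf q\,\mathbf r'$, where $\mathbf p$ and $\mathbf q$ are exactly as in \eqref{eq:p=} and \eqref{eq:q=}, $\mathbf v'$ is a linear word on the middle alphabet $\{a,b,a_i,b_i,x_1^{(i)},x_2^{(i)}\mid 1\le i\le n\}$, and $\mathbf r'$ has the same content as $\mathbf r$. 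Unlike in Lemma~\ref{lem:FIC(M(xzytxy))-class}, the weaker isoterm $xyx$ does not by itself pin $\mathbf r' = \mathbf r$, so a priori both $\mathbf v'$ and $\mathbf r'$ could differ from their targets.

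Second, use the $\mathbb G$ factor to pin both orderings simultaneously. For each pair of middle variables $x,y$, consider the substitution that fixes $x,y$, sends the simple variable $t$ (at the end of $\mathbf q$, strictly between the middle block and $\mathbf r$) to a fresh variable $z$, and kills every other variable. Since each middle variable occurs exactly once further inside $\mathbf r$, this projects $\mathbf w_\xi\approx\mathbf v$ to an identity of the form $\alpha z\beta\approx\alpha'z\beta'$ with $\alpha,\beta,\alpha',\beta'\in\{xy,yx\}$: $\alpha,\beta$ come from $\mathbf w_\xi$'s middle block and $\mathbf r$, and $\alpha',\beta'$ come from $\mathbf v$. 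Whenever $(\alpha,\beta)\ne(\alpha',\beta')$, the resulting identity is, up to swapping sides and renaming $x\leftrightarrow y$, one of $xyzxy\approx yxzxy$, $xyzyx\approx yxzyx$, $xyzxy\approx yxzyx$, or $xyzyx\approx yxzxy$. The last two fail in $\mathbb G$ by hypothesis; the first two, which by right-cancellation in the group reduce to $xy\approx yx$, fail because $G$ is non-abelian (itself a consequence of the hypothesis, since an abelian $G$ would satisfy both excluded identities). Hence $(\alpha,\beta) = (\alpha',\beta')$ for every pair of middle variables, which forces $\mathbf v' = \bigl(\prod_{i=1}^n a_i\bigr)\,a\,\bigl(\prod_{i=1}^n x^{(i)}_{1\xi_i}x^{(i)}_{2\xi_i}\bigr)\,b\,\bigl(\prod_{i=1}^n b_i\bigr)$ and $\mathbf r' = \mathbf r$, giving $\mathbf v = \mathbf w_\xi$. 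By Lemma~\ref{lem:M(W)} and Theorem~\ref{thm:C}, $\mathbb M(xyx)\vee\mathbb G$ is finitely universal.

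For the two remaining assertions, I would adapt the corresponding parts of the proof of Theorem~\ref{thm:M(xzytxy)veeN}: once the inclusion $\mathbb C\subseteq\mathbb M(xyx)\vee\mathbb G$ has been established, the analogue for $\mathbb C$ of the uncountable family of \cite[Theorem~1.1]{Gusev-19} --- visible in the construction of Section~\ref{sec:proof} --- transplants $2^{\aleph_0}$ subvarieties into $\mathbb M(xyx)\vee\mathbb G$ and simultaneously precludes a finite equational axiomatisation. The main obstacle in the plan is the uniform case analysis in the second step: one must verify that the single simple variable $t$ really does serve as the separating letter $z$ for every pair of middle variables --- including inner--outer pairs such as $(x_1^{(i)},a_j)$ and $(b_i,x_2^{(j)})$ --- and that each such projection produces exactly one of the four tabulated three-variable identities, with no unintended cancellations elsewhere.
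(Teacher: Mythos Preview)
Your route to finite universality differs from the paper's and is considerably heavier. Rather than proving directly that each $\mathbf w_\xi$ is an isoterm for $\mathbb M(xyx)\vee\mathbb G$, the paper shows only that the two short words $xyzxy$ and $xyzyx$ are isoterms (a brief case check using the exponent of $G$ and the hypothesis on~\eqref{eq:two-identities}), then invokes Lemma~\ref{lem:M(W)} and Lemma~\ref{lem:M(xyzxy,xyzyx)} to obtain $\mathbb M(xzytxy)\vee\mathbb N\subseteq\mathbb M(xyx)\vee\mathbb G$, and finishes with Theorem~\ref{thm:M(xzytxy)veeN}. Your direct argument is essentially sound, but two points need patching: your case list omits the identities with $\alpha=\alpha'$ and $\beta\ne\beta'$ (e.g.\ $xyzxy\approx xyzyx$), which are not obtainable from your four by swapping sides and renaming; and pinning pairs of \emph{middle} variables alone does not force $\mathbf r'=\mathbf r$, since $\mathbf r$ also contains $y_i,z_i,z_i',z_i''$. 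Both are easily repaired by the same projection-through-$t$ trick, using left or right cancellation in $G$ for the extra cases.

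Where your proposal has a genuine gap is the last paragraph. The construction of Section~\ref{sec:proof} (Proposition~\ref{prop:mapping}) produces, for each $n$, a \emph{finite} interval anti-isomorphic to $\mathfrak{Eq}(\mathscr W_n)$; taken together these give only countably many subvarieties, so the bare inclusion $\mathbb C\subseteq\mathbb M(xyx)\vee\mathbb G$ does not yield $2^{\aleph_0}$ subvarieties. The paper obtains uncountability from the \emph{stronger} inclusion $\mathbb M(xzytxy)\vee\mathbb N\subseteq\mathbb M(xyx)\vee\mathbb G$ combined with \cite[Theorem~1.1]{Gusev-19}, which is exactly what its short-word isoterm argument buys and your direct approach bypasses. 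Likewise, non-finite basedness does not follow from having uncountably many subvarieties (witness $\mathbb M(xyzxy,xyzyx)$, which is finitely based by~\cite{Jackson-Sapir-00} yet contains $\mathbb M(xzytxy)\vee\mathbb N$); the paper obtains it from the separate result \cite[Theorem~3]{Lee-13}, using only that $G$ is non-abelian of finite exponent.
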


The proof of Corollary~\ref{cor:M(xyx)veeG} requires one auxiliary result.

\begin{lemma}
\label{lem:M(xyzxy,xyzyx)}
The variety $\mathbb M(xzytxy)\vee\mathbb N$ is a subvariety of $\mathbb M(xyzxy,xyzyx)$.
\end{lemma}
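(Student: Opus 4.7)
My approach is to reduce this containment of varieties to two simpler ones via the universal property of join: since $\mathbb V_1 \vee \mathbb V_2 \subseteq \mathbb W$ if and only if both $\mathbb V_1 \subseteq \mathbb W$ and $\mathbb V_2 \subseteq \mathbb W$, it suffices to establish $\mathbb M(xzytxy) \subseteq \mathbb M(xyzxy, xyzyx)$ and $\mathbb N \subseteq \mathbb M(xyzxy, xyzyx)$ separately.

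The first containment would be handled by a direct application of Lemma~\ref{lem:M(W)}: it reduces to showing that $xzytxy$ is an isoterm for $\mathbb M(xyzxy, xyzyx)$. Given an identity $xzytxy \approx \mathbf w$ satisfied by $M(\{xyzxy, xyzyx\})$, I would apply substitutions $\varphi \colon \{x,y,z,t\} \to M(\{xyzxy,xyzyx\})$ sending variables to carefully chosen single-letter factors so that $\varphi(xzytxy)$ is a nonzero factor of one of the generating words. Because any stray product lies outside the factors of $xyzxy$ and $xyzyx$ and so collapses to $0$, matching $\varphi(\mathbf w)$ with $\varphi(xzytxy)$ position by position pins down the letters of $\mathbf w$ and forces $\mathbf w = xzytxy$.

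For the second containment, my primary strategy would be to extract from the finite generator of $\mathbb N$ exhibited in the Erratum to~\cite{Jackson-05} (as used in the proof of Theorem~\ref{thm:M(xzytxy)veeN}) a factor-monoid presentation $\mathbb N = \mathbb M(\mathscr W_{\mathbb N})$ for some finite set $\mathscr W_{\mathbb N}$ of words; then Lemma~\ref{lem:M(W)} again reduces the desired inclusion to checking that every word in $\mathscr W_{\mathbb N}$ is an isoterm for $\mathbb M(xyzxy, xyzyx)$, each verification being a substitution argument of the same flavour as in the first part. Should such a factor-monoid presentation not be readily available, the fallback is the identity-theoretic route: show that every identity satisfied by $M(\{xyzxy, xyzyx\})$ is deducible, via Proposition~\ref{prop:deduction}, from the five defining identities~\eqref{eq:five identities} of $\mathbb N$.

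The main obstacle lies in this second containment. Establishing $\mathbb N \subseteq \mathbb M(xyzxy, xyzyx)$ requires understanding in detail how the restricted combinatorics of the two generating words $xyzxy$ and $xyzyx$ --- which differ only in the order of their final pair of letters --- interact with the rewriting moves enabled by~\eqref{eq:five identities}, in order to certify that every constraint on identities coming from the factor monoid is already forced by those five identities. The isoterm machinery of Lemma~\ref{lem:M(W)}, which is essentially free for the first containment, gives genuine leverage here only if a good choice of $\mathscr W_{\mathbb N}$ can be pinned down; otherwise a bare-hands normal-form analysis modulo~\eqref{eq:five identities} will be needed.
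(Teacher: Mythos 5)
Your decomposition into the two containments $\mathbb M(xzytxy) \subseteq \mathbb M(xyzxy,xyzyx)$ and $\mathbb N \subseteq \mathbb M(xyzxy,xyzyx)$ is exactly the paper's strategy, and your treatment of the first containment via Lemma~\ref{lem:M(W)} is fine --- the paper calls it ``obvious'' and an isoterm check does the job.

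The gap is the second containment, which you correctly flag as the hard part but leave without an actual argument. Your primary route presupposes a factor-monoid presentation $\mathbb N = \mathbb M(\mathscr W_{\mathbb N})$; nothing in the paper asserts that $\mathbb N$ is generated by a factor monoid of some set of words, and having a finite generator (from the Erratum to~\cite{Jackson-05}) is a much weaker fact --- factor monoids $M(\mathscr W)$ are a very special class, so this is not something you can take for granted. Your fallback, a bare-hands deducibility analysis of all identities of $M(\{xyzxy,xyzyx\})$ from the five identities~\eqref{eq:five identities}, is not carried out and would be a substantial undertaking in its own right. The paper instead invokes a structural dichotomy from the proof of Lemma~3.14 in~\cite{Gusev-Vernikov-21}: any variety containing $\mathbb M(xyx)$ but not $\mathbb N$ must satisfy one of the three identities $xyzxy\approx yxzxy$, $xyzxy\approx xyzyx$, $xyzxy\approx yxzyx$. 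Since $\mathbb M(xyx) \subseteq \mathbb M(xyzxy,xyzyx)$ (an easy isoterm check) and none of these three identities holds in $\mathbb M(xyzxy,xyzyx)$ (again easy to check directly on the factor monoid), the containment $\mathbb N \subseteq \mathbb M(xyzxy,xyzyx)$ follows at once. That one cited lemma is the missing key idea; without it, or something equally sharp, your proposal does not close.
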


\begin{proof}
Obviously, $\mathbb M(xzytxy)\subseteq\mathbb M(xyzxy,xyzyx)$.
Further, it is shown in the proof of Lemma~3.14 in~\cite{Gusev-Vernikov-21} that if a variety contains $\mathbb M(xyx)$ but does not contain $\mathbb N$, then it satisfies one of the identities $xyzxy\approx yxzxy$, $xyzxy\approx xyzyx$ or $xyzxy\approx yxzyx$.
Since these three identities do not hold in $\mathbb M(xyzxy,xyzyx)$, it follows that $\mathbb N\subseteq\mathbb M(xyzxy,xyzyx)$.
Therefore, $\mathbb M(xzytxy)\vee\mathbb N$ is a subvariety of $\mathbb M(xyzxy,xyzyx)$.
\end{proof}

\begin{proof}[Proof of Corollary~\ref{cor:M(xyx)veeG}]
Since the group $G$ does not satisfy the identities~\eqref{eq:two-identities}, this group is non-abelian, whence $\mathbb M(xyx) \vee \mathbb G$ is non-finitely based by~\cite[Theorem~3]{Lee-13}.
Let $xyzxy\approx \mathbf v$ be an identity of $\mathbb M(xyx) \vee \mathbb G$.
Since $xyx$ is an isoterm for $\mathbb M(xyx) \vee \mathbb G$, we have $\mathbf v\in\{xyzxy,xyzyx,yxzxy,yxzyx\}$.
The word $\mathbf v$ cannot coincide with $yxzyx$ because $G$ violates the identities~\eqref{eq:two-identities}.
Let $m$ denote the exponent of $G$.
If $\mathbf v=xyzyx$, then $G$ satisfies the identities
\[
xy\approx (xy)^{m+1}\approx (xy)^m(yx)\approx yx
\]
contradicting the fact that $G$ is a non-abelian group.
If $\mathbf v=yxzxy$, then $G$ satisfies the identities
\[
xy\approx (xy)^{m+1}\approx (yx)(xy)^m\approx yx
\]
contradicting the fact that the group $G$ is a non-abelian again.
Therefore, $\mathbf v=xyzxy$.
We see that $xyzxy$ is an isoterm for $\mathbb M(xyx) \vee \mathbb G$.
By similar arguments we can show that $xyzyx$ is an isoterm for $\mathbb M(xyx) \vee \mathbb G$ as well.
Now Lemmas~\ref{lem:M(W)} and~\ref{lem:M(xyzxy,xyzyx)} apply, yielding that $\mathbb M(xzytxy)\vee\mathbb N$ is a subvariety of $\mathbb M(xyx) \vee \mathbb G$.
Hence the variety $\mathbb M(xyx) \vee \mathbb G$ is finitely universal and contains uncountably many subvarieties by Theorem~\ref{thm:M(xzytxy)veeN}.
\end{proof}

\begin{remark}
By the theorem of Oates and Powell~\cite{Oates-Powell-64}, if $G$ is a finite group, then $\mathbb G$ is a Cross variety. 
Hence Corollary~\ref{cor:M(xyx)veeG} provides plenty of examples of non-finitely based finitely universal varieties of monoids with uncountably many subvarieties which are the join of two Cross varieties. 
Namely, they are the varieties of the form $\mathbb M(xyx)\vee\mathbb G$ for any finite group $G$ violated the identities~\eqref{eq:two-identities}. 
For example, for each prime $p>2$, consider the dihedral group
\[
D_p:=\langle a,b\mid a^p=b^2=(ab)^2=1\rangle
\]
(the group of symmetries of a regular polygon with $p$ sides).
This group does not satisfy the identities~\eqref{eq:two-identities}.
Indeed, consider the substitutions $\varphi\colon \mathscr X \to D_p$ and $\psi\colon \mathscr X \to D_p$ defined by
\[
\varphi(v):= 
\begin{cases} 
ab & \text{if $v=x$}, \\ 
b & \text{if $v=y$}, \\ 
1 & \text{otherwise},
\end{cases}
\ \ \text{ and }\ \ 
\psi(v):= 
\begin{cases} 
a & \text{if $v=x$}, \\ 
b & \text{if $v=y$}, \\ 
1 & \text{otherwise}.
\end{cases}
\]
It is routine to check that 
\[
\varphi(xyzxy)=\psi(xyzyx)=a^2 \ \text{ and }\ \varphi(yxzyx)=\psi(yxzxy)=a^{p-2}.
\]
Since $p>2$ and $p$ is prime, we have $a^2\ne a^{p-2}$.
Hence $D_p$ violates the identities~\eqref{eq:two-identities}.
It is well known that $D_p$ is a minimal non-abelian group of order $2p$; see~\cite[Section~1.9]{Hall-59}. 
From this it can be easily deduced that the lattice $\mathfrak L(\mathbb D_p)$ is as shown in Fig.~\ref{fig:L(D_p)} (we denote by $\mathbb Z_k$ the variety of all abelian groups of exponent dividing $k$).
Thus, we have a countably infinite series of finitely universal varieties
of monoids with uncountably many subvarieties which are the join of two Cross varieties
whose lattices of subvarieties are 5-element.
\end{remark}

\begin{figure}[htb]
\unitlength=1mm
\linethickness{0.4pt}
\begin{center}
\begin{picture}(30,36)
\put(15,3){\circle*{1.33}}
\put(5,13){\circle*{1.33}}
\put(25,13){\circle*{1.33}}
\put(15,23){\circle*{1.33}}
\put(15,33){\circle*{1.33}}
\gasset{AHnb=0,linewidth=0.4}
\drawline(15,3)(25,13)(15,23)(15,33)
\drawline(15,3)(5,13)(15,23)
\put(15,0){\makebox(0,0)[cc]{$\mathbb T$}}
\put(15,36){\makebox(0,0)[cc]{$\mathbb D_p$}}
\put(16.5,23){\makebox(0,0)[lc]{$\mathbb Z_{2p}$}}
\put(26,13){\makebox(0,0)[lc]{$\mathbb Z_p$}}
\put(4,13){\makebox(0,0)[rc]{$\mathbb Z_2$}}
\end{picture}
\end{center}
\caption{The lattice $\mathfrak L(\mathbb D_p)$}
\label{fig:L(D_p)}
\end{figure}
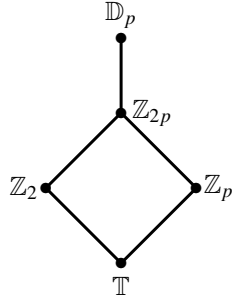

\begin{remark}
The following is claimed in the proof of Corollary~3.1 in~\cite{Gusev-19}:
$\mathbb M(xzytxy)\vee\mathbb N$ is a subvariety of $\mathbb M(xyx) \vee \mathbb G$ for any finite non-abelian group $G$.
In fact, this result is wrong in general. 
For example, it is easy to see that the quaternion group
\[
Q_8:=\langle i, j,k \mid i^2=j^2=k^2 = ijk\rangle
\]
satisfies the identities~\eqref{eq:two-identities}, whence $\mathbb N\nsubseteq \mathbb M(xyx)\vee \mathbb Q_8$.
As we have shown in the proof of Corollary~\ref{cor:M(xyx)veeG}, the discussed result is true whenever $G$ is a finite group violated the identities~\eqref{eq:two-identities}.
\end{remark}

\subsection{Minimal monoids generating finitely universal varieties}

Here we provide a complete solution to Problems~\ref{prob:min-order} and~\ref{prob:B21-A21}. 
As we have mentioned in the introduction, every monoid of order five or less generates a non-finitely universal variety~\cite[Proposition~6.9]{Gusev-Li-Zhang-23}.
Examples of finitely universal varieties generated by 6-element monoids are provided by the following theorem.

\begin{theorem}
\label{thm:B21-A21}
The 6-element monoids $B_2^1$ and $A_2^1$ generate finitely universal varieties.
\end{theorem}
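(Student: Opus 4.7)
The plan is to reduce Theorem~\ref{thm:B21-A21} to Theorem~\ref{thm:C}. Since the introduction records $\mathbb{B}_2^1 \subsetneq \mathbb{A}_2^1$, it suffices to establish the single inclusion $\mathbb{C} \subseteq \mathbb{B}_2^1$; once this is in hand, $\mathfrak{L}(\mathbb{C})$ is the principal ideal $[\mathbb{T},\mathbb{C}]$ of both $\mathfrak{L}(\mathbb{B}_2^1)$ and $\mathfrak{L}(\mathbb{A}_2^1)$, so the finite universality of $\mathbb{C}$ transfers upward to both varieties.

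By Lemma~\ref{lem:M(W)}, the containment $\mathbb{C} \subseteq \mathbb{B}_2^1$ is equivalent to the assertion that every word $\mathbf{w}_\xi$ with $n \geq 2$ and $\xi \in S_2^n$ is an isoterm for $B_2^1$. The first step is to verify the auxiliary claim that the single word $xzytxy$ is an isoterm for $B_2^1$, a short matrix-unit substitution exercise inside $B_2$; this yields $\mathbb{M}(xzytxy) \subseteq \mathbb{B}_2^1$. Consequently, any identity $\mathbf{w}_\xi \approx \mathbf{v}$ of $B_2^1$ is also an identity of $\mathbb{M}(xzytxy)$, so Lemma~\ref{lem:FIC(M(xzytxy))-class} immediately forces $\mathbf{v} = \mathbf{p}\mathbf{v}'\mathbf{q}\mathbf{r}$, with $\mathbf{p}$, $\mathbf{q}$, $\mathbf{r}$ as in \eqref{eq:p=}--\eqref{eq:r=} and $\mathbf{v}'$ a linear word on the alphabet $\{a, a_i, b, b_i, x_1^{(i)}, x_2^{(i)} \mid 1 \le i \le n\}$.

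It then remains to pin down the order of the letters in $\mathbf{v}'$, which should be the sequence
\[
a_1 \cdots a_n \, a \, x_{1\xi_1}^{(1)} x_{2\xi_1}^{(1)} \cdots x_{1\xi_n}^{(n)} x_{2\xi_n}^{(n)} \, b \, b_1 \cdots b_n.
\]
For any pair of these letters whose relative order I wish to pin down, I would design a substitution $\varphi \colon \mathscr{X} \to B_2^1$ that sends $a$ and $b$ to $e_{12}$ and $e_{21}$ respectively, assigns the pair in question to suitably chosen matrix units, and collapses every other variable to the identity of $B_2^1$. With the choices calibrated to the pair under test, $\varphi(\mathbf{w}_\xi)$ is a specific nonzero matrix unit of $B_2$, while swapping the pair in $\mathbf{v}'$ produces a mismatched index in the corresponding product and collapses $\varphi(\mathbf{v})$ to~$0$. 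Running through the $O(n)$ pairwise comparisons needed to separate consecutive letters of the target sequence forces $\mathbf{v}'$ to coincide with the middle factor of $\mathbf{w}_\xi$, whence $\mathbf{v} = \mathbf{w}_\xi$.

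The main obstacle is not conceptual but purely bookkeeping: the substitutions must be organised so that each of the many occurrences in the long word $\mathbf{w}_\xi$ behaves predictably, and in particular so that variables mapped to the identity do not inadvertently merge adjacent blocks in a way that masks the ordering constraint being tested. Fortunately, because Lemma~\ref{lem:FIC(M(xzytxy))-class} already rigidifies the prefix $\mathbf{p}$, the suffix $\mathbf{q}\mathbf{r}$, and the linearity of $\mathbf{v}'$ on its alphabet, the residual combinatorics reduce to a short catalogue of Brandt-semigroup calculations and no genuinely new technical difficulty arises.
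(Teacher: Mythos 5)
Your proposal is correct, but it takes a genuinely different route from the paper's two-line deduction. The paper first establishes (via Perkins) that $xyzxy$ and $xyzyx$ are isoterms for $B_2^1$, giving $\mathbb M(xyzxy,xyzyx)\subseteq\mathbb B_2^1$ by Lemma~\ref{lem:M(W)}; it then invokes Lemma~\ref{lem:M(xyzxy,xyzyx)} to place $\mathbb M(xzytxy)\vee\mathbb N$ inside $\mathbb M(xyzxy,xyzyx)$, and cites Theorem~\ref{thm:M(xzytxy)veeN} for finite universality of the latter join. You instead prove the single containment $\mathbb C\subseteq\mathbb B_2^1$ directly and appeal to Theorem~\ref{thm:C}. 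Your plan rests on (a) Lemma~\ref{lem:FIC(M(xzytxy))-class} to reduce every $\mathbb B_2^1$-identity $\mathbf w_\xi\approx\mathbf v$ to determining the ordering inside the linear middle block $\mathbf v'$, and (b) $B_2^1$-substitutions to pin down that ordering. For (b), the cleanest formulation is the one the paper already uses against $\mathbb N$ in the proof of Theorem~\ref{thm:M(xzytxy)veeN}: for $x,y\in\mathsf{con}(\mathbf v')$ with $({_{1\mathbf w_\xi}x})<({_{1\mathbf w_\xi}y})$, set everything except $x,y,t$ to $1$, yielding $\mathbf w_\xi(x,y,t)=xyt\mathbf a$ with $\mathbf a\in\{xy,yx\}$; then one checks $B_2^1$ violates $xyt\mathbf a\approx yxt\mathbf a$ (e.g.\ $x\mapsto e_{12},y\mapsto e_{21},t\mapsto 1$ for $\mathbf a=xy$, and $t\mapsto e_{12}$ for $\mathbf a=yx$). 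Fixing $a,b$ too, as you suggest, is unnecessary and only complicates the bookkeeping. In short, your argument replaces the role played by $\mathbb N$ with explicit $B_2^1$ computations, which works precisely because $\mathbb N\subseteq\mathbb B_2^1$.

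Two cautions. First, verifying that $xzytxy$ is an isoterm for $B_2^1$ is not a single substitution; it is a modest but non-trivial case analysis (the paper sidesteps this by citing Perkins for $xyzxy$ and $xyzyx$ and then applying Lemma~\ref{lem:M(xyzxy,xyzyx)}). If you want a short proof of this step, it is easier to note that $xzytxy$ is an isoterm for $\mathbb M(xyzxy,xyzyx)\subseteq\mathbb B_2^1$ than to work in $B_2^1$ from scratch. Second, by bypassing Theorem~\ref{thm:M(xzytxy)veeN} you lose, as a byproduct, the stronger statement about joins of two Cross varieties; the paper's route delivers that for free. Your reduction of the $A_2^1$ case to the $B_2^1$ case via the known inclusion $\mathbb B_2^1\subsetneq\mathbb A_2^1$ matches the paper exactly.
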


\begin{proof}
It is easy to show that $xyzxy$ and $xyzyx$ are isoterms for $\mathbb B_2^1$; see the proof of Theorem~10 in~\cite{Perkins-69}.
This fact an Lemma~\ref{lem:M(W)} imply that $\mathbb M(xyzxy,xyzyx)\subseteq\mathbb B_2^1$.
Then $\mathbb M(xzytxy)\vee\mathbb N$ is a subvariety of $\mathbb B_2^1$ by Lemma~\ref{lem:M(xyzxy,xyzyx)} and, therefore, the variety $\mathbb B_2^1$ is finitely universal by Theorem~\ref{thm:M(xzytxy)veeN}.
Since $\mathbb A_2^1$ properly contains $\mathbb B_2^1$, the variety $\mathbb A_2^1$ is also finitely universal.
\end{proof}

\subsection{Finitely generated finitely based varieties}

Recall that a variety is \textit{locally finite} if every finitely generated member of it is finite.
More than being just non-finitely based, the varieties $\mathbb B_2^1$ and $\mathbb A_2^1$ are \textit{inherently non-finitely based} in the sense that every locally finite variety containing it is non-finitely based~\cite{Sapir-87}.
However, it is verified in~\cite[Theorem~3.2]{Jackson-Sapir-00} that the variety $\mathbb M(xyzxy,xyzyx)$ is finitely based by the first four identities in~\eqref{eq:five identities}.
Hence Theorem~\ref{thm:M(xzytxy)veeN} and Lemma~\ref{lem:M(xyzxy,xyzyx)} imply the following result providing an affirmative answer to Question~6.3 in~\cite{Gusev-Lee-20}.

\begin{theorem}
\label{thm:FB-FU}
There is a finitely universal variety of monoids that is both finitely based and finitely generated.\qed
\end{theorem}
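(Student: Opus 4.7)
The plan is to exhibit $\mathbb V := \mathbb M(xyzxy, xyzyx)$ as the desired witness. This choice is effectively forced by the paragraph preceding the theorem, where the author recalls that $\mathbb V$ is finitely based (by the first four identities of~\eqref{eq:five identities}, per \cite[Theorem~3.2]{Jackson-Sapir-00}) and, via Lemma~\ref{lem:M(xyzxy,xyzyx)}, that $\mathbb V$ contains $\mathbb M(xzytxy)\vee\mathbb N$.

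First I would verify that $\mathbb V$ is finitely generated. By construction, $\mathbb V$ is generated by the factor monoid $M(\{xyzxy, xyzyx\})$, whose underlying set is the (finite) collection of all factors of $xyzxy$ and $xyzyx$, together with the identity $1$ and the zero $0$. Hence $M(\{xyzxy, xyzyx\})$ is a finite monoid, and $\mathbb V$ is finitely generated. The finite basis statement is then imported wholesale from \cite[Theorem~3.2]{Jackson-Sapir-00}.

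For finite universality I would invoke the chain of inclusions already assembled: by Lemma~\ref{lem:M(xyzxy,xyzyx)}, $\mathbb M(xzytxy)\vee\mathbb N\subseteq\mathbb V$, and by Theorem~\ref{thm:M(xzytxy)veeN} the smaller variety is finitely universal. Since $\mathfrak L(\mathbb M(xzytxy)\vee\mathbb N)$ is a principal ideal of $\mathfrak L(\mathbb V)$, any finite lattice that embeds into the former automatically embeds into the latter, so $\mathbb V$ itself is finitely universal.

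There is essentially no obstacle left to overcome: the substantive work — the finite universality of $\mathbb C$ (Theorem~\ref{thm:C}), the transfer to $\mathbb M(xzytxy)\vee\mathbb N$ (Theorem~\ref{thm:M(xzytxy)veeN}), the inclusion lemma (Lemma~\ref{lem:M(xyzxy,xyzyx)}), and the finite basis of $\mathbb M(xyzxy,xyzyx)$ (cited) — has already been performed. The theorem functions as a short assembly step whose only real content is the observation that the factor monoid $M(\{xyzxy,xyzyx\})$ is finite, so the three properties (finite generation, finite basis, finite universality) are simultaneously enjoyed by one and the same variety.
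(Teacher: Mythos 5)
Your proposal is correct and follows exactly the paper's intended argument: take $\mathbb M(xyzxy,xyzyx)$, which is finitely generated by its (finite) factor monoid, finitely based by \cite[Theorem~3.2]{Jackson-Sapir-00}, and finitely universal because it contains $\mathbb M(xzytxy)\vee\mathbb N$ by Lemma~\ref{lem:M(xyzxy,xyzyx)} and Theorem~\ref{thm:M(xzytxy)veeN}. The paper presents this as the one-paragraph remark preceding the theorem (hence the \qed on the statement), and your write-up is just a slightly more explicit version of the same assembly.
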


\section{Proof of Theorem~\ref{thm:C}}
\label{sec:proof}

We verify Theorem~\ref{thm:C} modulo Proposition~\ref{prop:mapping} below and then prove this proposition.

\begin{proof}[Proof of Theorem~\ref{thm:C}]
The inclusion $\mathbb M(\mathscr W_n)\subseteq\mathbb C$ and Proposition~\ref{prop:mapping} imply that the lattice $\mathfrak{Eq}(\mathscr W_n)$ of equivalence relations on the set $\mathscr W_n$ is anti-isomorphic to a sublattice of $\mathfrak L(\mathbb C)$.
Since $|\mathscr W_n|=2^n$, it is easy to see that, for any $k=1,2,\dots,2^n$, the lattice $\mathfrak{Eq}(\{1,2,\ldots,k\})$ is anti-isomorphic to a sublattice of $\mathfrak{Eq}(\mathscr W_n)$.
Therefore, the lattice $\mathfrak L(\mathbb C)$ contains an anti-isomorphic copy of every finite lattice of equivalence relations.
To complete the proof, it remains to refer to the theorem of Pudl\'ak and T$\mathring{\text{u}}$ma~\cite{Pudlak-Tuma-80}, which states that every finite lattice can be embedded in a finite lattice of equivalence relations.
The variety $\mathbb C$ is thus finitely universal.
\end{proof}

The subvariety of a variety~$\mathbb V$ defined by a set~$\Sigma$ of identities is denoted by~$\mathbb V\Sigma$.
Given any set $\mathscr W \subseteq \mathscr X^\ast$ of words and any equivalence relation $\pi \in \mathfrak{Eq}(\mathscr W)$, define 
\[ 
\mathsf{Id}(\pi) := \{ \mathbf u \approx \mathbf v \mid (\mathbf u,\mathbf v) \in \pi \}. 
\]
For any set~$\mathsf{A}$, the universal relation on~$\mathsf{A}$ is denoted by~$\upsilon_\mathsf{A}$.

\begin{proposition}
\label{prop:mapping}
For each $n\ge 2$, the lattice $\mathfrak{Eq}(\mathscr W_n)$ is anti-isomorphic to the interval $[\mathbb M(\mathscr W_n)\{\mathsf{Id}(\upsilon_{\mathscr W_n})\},\mathbb M(\mathscr W_n)]$ of the lattice $\mathfrak L(\mathbb M(\mathscr W_n))$.
\end{proposition}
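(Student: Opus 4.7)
The plan is to construct an explicit order-reversing bijection between $\mathfrak{Eq}(\mathscr W_n)$ and the interval under consideration. Define
\[
\Phi\colon \mathfrak{Eq}(\mathscr W_n)\to \bigl[\mathbb M(\mathscr W_n)\{\mathsf{Id}(\upsilon_{\mathscr W_n})\},\mathbb M(\mathscr W_n)\bigr],\quad \pi\mapsto \mathbb M(\mathscr W_n)\{\mathsf{Id}(\pi)\},
\]
and its candidate inverse
\[
\Psi\colon \mathbb V\mapsto \{(\mathbf u,\mathbf v)\in \mathscr W_n\times \mathscr W_n : \mathbb V\models \mathbf u\approx \mathbf v\}.
\]
Both maps are patently order-reversing. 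The map $\Phi$ is well-defined into the prescribed interval (the identity relation on $\mathscr W_n$ gives $\mathbb M(\mathscr W_n)$, and $\upsilon_{\mathscr W_n}$ gives the bottom by definition), while $\Psi(\mathbb V)$ is automatically an equivalence relation on $\mathscr W_n$.

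The technical heart of the proof is a rigidity lemma asserting that, for each $\pi\in\mathfrak{Eq}(\mathscr W_n)$, each $\xi\in S_2^n$, and each word $\mathbf v$, if $\Phi(\pi)$ satisfies $\mathbf w_\xi\approx \mathbf v$, then $\mathbf v=\mathbf w_{\xi'}$ for some $\xi'\in S_2^n$ with $(\mathbf w_\xi,\mathbf w_{\xi'})\in\pi$. The plan to prove this is to apply Proposition~\ref{prop:deduction} to obtain a finite chain of direct deductions $\mathbf w_\xi=\mathbf v_0,\mathbf v_1,\dots,\mathbf v_m=\mathbf v$ from $\mathsf{Id}(\pi)$ together with the identities of $\mathbb M(\mathscr W_n)$. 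Since every $\mathbf w_\eta\in\mathscr W_n$ is an isoterm for $\mathbb M(\mathscr W_n)$ by Lemma~\ref{lem:M(W)}, no single-step deduction using only an identity of $\mathbb M(\mathscr W_n)$ can nontrivially transform a word of $\mathscr W_n$. Hence each transition out of the current $\mathscr W_n$-word must apply some $\mathbf w_\eta\approx \mathbf w_{\eta'}\in \mathsf{Id}(\pi)$. The highly rigid structure of $\mathbf w_\eta$—in particular the prefix $\mathbf p$ of \eqref{eq:p=} with its simple variables $z_i, t_i, z_i', t_i', z_i'', t_i''$ and the suffix $\mathbf q\mathbf r$ of \eqref{eq:q=} and \eqref{eq:r=} whose interleaving $z_ia_iz_i'b_iz_i''$ pins down the positions of $a_i, b_i, z_i, z_i', z_i''$—forces any such direct deduction to occur at the top level (outer words $\mathbf a=\mathbf b=1$) via a substitution that is a bijective renaming of variables, so that each intermediate word again lies in $\mathscr W_n$ and, by transitivity of $\pi$, in the same $\pi$-class as $\mathbf w_\xi$. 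Induction on $m$ then yields the rigidity lemma.

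Given the rigidity lemma, the equality $\Psi(\Phi(\pi))=\pi$ is immediate: $\pi\subseteq\Psi(\Phi(\pi))$ holds by the defining identities of $\Phi(\pi)$, and $\Psi(\Phi(\pi))\subseteq\pi$ is the content of the lemma. Consequently $\Phi$ is injective and a lattice anti-embedding. For surjectivity, given $\mathbb V$ in the interval, set $\pi:=\Psi(\mathbb V)$; the inclusion $\mathbb V\subseteq\Phi(\pi)$ is immediate. For the reverse inclusion, one argues that since $\mathbb V$ sits above $\mathbb M(\mathscr W_n)\{\mathsf{Id}(\upsilon_{\mathscr W_n})\}$, every identity of $\mathbb V$ not already in $\mathbb M(\mathscr W_n)$ is deducible from the identities of $\mathbb M(\mathscr W_n)$ together with $\mathsf{Id}(\upsilon_{\mathscr W_n})$ by Proposition~\ref{prop:deduction}, and the rigidity lemma (used to identify which $\mathscr W_n$-identifications each step actually effects) shows that in fact only identities in $\mathsf{Id}(\pi)$ are needed, whence the identity also holds in $\Phi(\pi)$. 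Combining yields $\mathbb V=\Phi(\pi)$, and order-reversal of $\Phi$ then yields the desired anti-isomorphism. The main obstacle is the rigidity lemma itself, which demands a careful combinatorial analysis of how direct deductions from $\mathsf{Id}(\pi)$ can interact with the rigid prefix/suffix scaffolding of words in $\mathscr W_n$.
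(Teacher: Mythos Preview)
Your map $\Phi$ and the rigidity lemma (that $\Phi(\pi)\models\mathbf w_\xi\approx\mathbf v$ forces $\mathbf v\in\mathscr W_n$ with $(\mathbf w_\xi,\mathbf v)\in\pi$) match the paper's Corollary~\ref{cor:FIC(M(W_n)Id(pi))-class}, and your injectivity and order-reversal arguments are correct and essentially identical to the paper's.

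The gap is in surjectivity. You take $\mathbb V$ in the interval, set $\pi=\Psi(\mathbb V)$, and must show $\Phi(\pi)\subseteq\mathbb V$, i.e.\ that every identity $\mathbf u\approx\mathbf v$ of $\mathbb V$ is deducible from the identities of $\mathbb M(\mathscr W_n)$ together with $\mathsf{Id}(\pi)$. You argue: such an identity is deducible from the identities of $\mathbb M(\mathscr W_n)$ together with $\mathsf{Id}(\upsilon_{\mathscr W_n})$, and ``the rigidity lemma shows that in fact only identities in $\mathsf{Id}(\pi)$ are needed.'' But the rigidity lemma controls deductions \emph{starting from a word of $\mathscr W_n$}; it says nothing about a deduction chain $\mathbf u=\mathbf w_0,\dots,\mathbf w_m=\mathbf v$ whose intermediate words are arbitrary. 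A step $\mathbf w_i=\mathbf a\varphi(\mathbf w_\eta)\mathbf b\to\mathbf a\varphi(\mathbf w_{\eta'})\mathbf b=\mathbf w_{i+1}$ with $(\mathbf w_\eta,\mathbf w_{\eta'})\notin\pi$ need not hold in $\mathbb V$ (only the composite $\mathbf u\approx\mathbf v$ does), and there is no mechanism in your argument to replace it by a $\pi$-step.

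This is precisely the difficulty the paper spends most of Section~\ref{sec:proof} overcoming. The paper first passes to the auxiliary variety $\mathbb O\supseteq\mathbb M(\mathscr W_n)$ and uses Lemma~\ref{lem:O} to replace an arbitrary defining identity of $\mathbb V$ by a \emph{reduced} one; it then runs a lengthy induction on the ``invertibility index'' of a reduced identity, invoking the battery of isoterm and stability results (Lemmas~\ref{lem:three-isotrems}--\ref{lem:1x-1c1-1c2-1y} and Corollary~\ref{cor:-ix-1h-iy}) to locate inside $\mathbf u$ an explicit substituted copy of some $\mathbf w_\xi$ and peel off a single identity $\mathbf w_\xi\approx\mathbf w_\eta\in\mathsf{Id}(\upsilon_{\mathscr W_n})$, reducing the index. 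None of this is bypassed by the rigidity lemma; your surjectivity argument as written does not go through.
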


The proof of Proposition~\ref{prop:mapping} requires some intermediate results.

\begin{lemma}
\label{lem:isotrems-for-M(W_n)Id(U_n)}
For each $n\ge2$, the words $xy$, $xyx$, $xyzxty$, $xzytxy$ and $xytzsxzy$ are isoterms for the variety $\mathbb M(\mathscr W_n)\{\mathsf{Id}(\upsilon_{\mathscr W_n})\}$.
\end{lemma}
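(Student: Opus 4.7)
The plan is to fix a word $\mathbf w$ from the list and show that any identity $\mathbf w \approx \mathbf w'$ satisfied by $\mathbb V := \mathbb M(\mathscr W_n)\{\mathsf{Id}(\upsilon_{\mathscr W_n})\}$ is trivial. By Proposition~\ref{prop:deduction}, such an identity admits a deduction $\mathbf w = \mathbf w_0, \mathbf w_1, \ldots, \mathbf w_m = \mathbf w'$ whose steps use either (a)~an identity of $\mathbb M(\mathscr W_n)$, or (b)~an identity $\mathbf w_\xi \approx \mathbf w_{\xi'}$ from $\mathsf{Id}(\upsilon_{\mathscr W_n})$. I would aim to show that each individual step is trivial, so that the whole deduction collapses.

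First, I would verify that each of the five words is an isoterm for $\mathbb M(\mathscr W_n)$ itself, by exhibiting for each word a substitution $\sigma\colon\mathsf{con}(\mathbf w)\to\mathscr X^{\ast}$ such that $\sigma(\mathbf w)$ occurs as a factor of some $\mathbf w_\xi$ in a way that is faithfully witnessed by the factor monoid $M(\mathscr W_n)$. Given the structure of $\mathbf w_\xi$, this is mostly routine: $xy$ and $xyx$ are immediate (the latter using the two occurrences of $a$ bracketing the middle block in $\mathbf w_\xi$); $xyzxty$ and $xzytxy$ exploit the two occurrences of $a$ (respectively~$b$) together with intervening simple variables such as $t_{i}^{\prime}$; and $xytzsxzy$ uses the elaborate pattern occurring in $\mathbf r$ combined with the simple variables appearing in $\mathbf q$. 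This handles all deduction steps of type~(a).

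The harder part is to rule out non-trivial steps of type~(b). Note that $\mathbf w_\xi$ and $\mathbf w_{\xi'}$ differ only at the middle positions $x_{1\xi_i}^{(i)}x_{2\xi_i}^{(i)}$ versus $x_{1\xi_i'}^{(i)}x_{2\xi_i'}^{(i)}$ for indices $i$ with $\xi_i\neq\xi_i'$. Hence, for any substitution $\varphi$, we have $\varphi(\mathbf w_\xi)=\varphi(\mathbf w_{\xi'})$ unless, for some such $i$, both $\varphi(x_1^{(i)})$ and $\varphi(x_2^{(i)})$ are non-empty and do not commute as words. In that non-trivial case, $\varphi(\mathbf w_\xi)$ inevitably contains two well-separated occurrences of the pair $(\varphi(x_1^{(i)}),\varphi(x_2^{(i)}))$: one inside the middle block, another inside $\varphi(x_1^{(i)}z_i a_i z_i^{\prime}b_i z_i^{\prime\prime}x_2^{(i)})$ from $\mathbf r$, together with substantial prefix and suffix patterns inherited from $\varphi(\mathbf p)$ and $\varphi(\mathbf q)$. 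Any ambient word $\mathbf w_i=\mathbf a\varphi(\mathbf w_\xi)\mathbf b$ of this form then exhibits very specific combinatorial features (several simple variables sandwiched among multiple variables, particular adjacency patterns, and so on) which I would check, case by case for the five target shapes, are incompatible with any word reachable from $\mathbf w$ via the preceding deduction steps (tracked through the isoterm property established in the previous paragraph).

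The main obstacle is this last case analysis: controlling how the ``shape'' of $\mathbf w$ propagates through the deduction, and verifying for each of the five words that the forced pattern associated with $\varphi(\mathbf w_\xi)$ cannot be accommodated in any word that is reachable from $\mathbf w$ by the earlier steps. The design of $\mathbf w_\xi$, with many mutually distinct simple variables in $\mathbf p$ and $\mathbf q$ together with the intricate grouping of the $x_1^{(i)},x_2^{(i)}$ between blocks of other multiple variables in $\mathbf r$, is precisely tailored so that any substitution $\varphi$ whose image pattern can fit inside a descendant of $\mathbf w$ is forced to satisfy $\varphi(\mathbf w_\xi)=\varphi(\mathbf w_{\xi'})$, reducing each type~(b) step to a trivial one.
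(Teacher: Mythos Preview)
Your plan is correct and matches the paper's proof in its first half: exhibiting each target word as (the image under a substitution of) a factor of some $\mathbf w_\xi$ to establish that it is an isoterm for $\mathbb M(\mathscr W_n)$. Your handling of the type~(b) steps is also correct in substance, but it is framed in an unnecessarily roundabout way.

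The paper bypasses the whole deduction-tracking argument by a second application of Lemma~\ref{lem:M(W)}. To show that $\mathbf w$ is an isoterm for $\mathbb V:=\mathbb M(\mathscr W_n)\{\mathsf{Id}(\upsilon_{\mathscr W_n})\}$ is, by that lemma, equivalent to showing $\mathbb M(\mathbf w)\subseteq\mathbb V$, which in turn amounts to checking that the finite monoid $M(\mathbf w)$ satisfies every identity $\mathbf w_\xi\approx\mathbf w_\eta$ in $\mathsf{Id}(\upsilon_{\mathscr W_n})$. That check is exactly your observation that no substitution $\varphi$ with $\varphi(x_1^{(i)})$ and $\varphi(x_2^{(i)})$ non-empty and non-commuting can land $\varphi(\mathbf w_\xi)$ among the factors of $\mathbf w$ --- but now phrased as a single static verification in a concrete monoid on at most eight letters, with nothing to propagate along a deduction chain. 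Your inductive device ``each step is trivial, so $\mathbf w_i=\mathbf w$ throughout'' recovers the same content, but the Lemma~\ref{lem:M(W)} formulation makes transparent that no dynamics are involved: one is merely testing membership of a fixed finite monoid in a fixed variety, which is why the paper can dismiss the verification as routine.
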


\begin{proof}
Consider the substitution $\varphi\colon \mathscr X \to \mathscr X^\ast$ defined by
\[
\varphi(v):= 
\begin{cases} 
x_2^{(1)} & \text{if $v=x$}, \\ 
x_1^{(2)} & \text{if $v=y$}, \\ 
y_1 & \text{if $v=z$}, \\ 
x_2^{(2)}\,x_{1}^{(3)}x_{2}^{(3)}x_{1}^{(4)}x_{2}^{(4)}\cdots x_{1}^{(n)}x_{2}^{(n)}\,bb_1b_2\cdots b_n\, s_0y_0s_1 & \text{if $v=t$}, \\
s_2y_2s_3y_3\cdots s_ny_n\,t b y_0\, x_1^{(1)}z_1 a_1z_1^{\prime} b_1z_1^{\prime\prime} & \text{if $v=s$}, \\ 
v & \text{otherwise}.
\end{cases} 
\]
Obviously, $\varphi(xytzsxzy)$ is a factor of $\mathbf w_\varepsilon$, where $\varepsilon$ is the identity element of $S_2^n$.
It follows that a nontrivial identity of the form $xytzsxzy \approx \mathbf a$ implies a nontrivial identity $\mathbf w_\varepsilon\approx \mathbf w$.
Therefore, $xytzsxzy$ is an isoterm for $\mathbb M(\mathscr W_n)$.
Further, it is routine to check that $M(xytzsxzy)$ satisfies $\mathbf w_{\xi}\approx \mathbf w_{\eta}$ for any $\xi,\eta\in S_2^n$.
Hence $xytzsxzy$ is an isoterm for $\mathbb M(\mathscr W_n)\{\mathsf{Id}(\upsilon_{\mathscr W_n})\}$ by Lemma~\ref{lem:M(W)}.
By similar arguments we can show that the words $xy$, $xyx$, $xyzxty$, $xzytxy$ are isoterms for $\mathbb M(\mathscr W_n)\{\mathsf{Id}(\upsilon_{\mathscr W_n})\}$ as well.
\end{proof}

\begin{lemma}
\label{lem:FIC(M(W_n)Id(U_n))-class}
Let $n\ge2$ and $\mathbf u \approx \mathbf v$ be an identity of $\mathbb M(\mathscr W_n)\{\mathsf{Id}(\upsilon_{\mathscr W_n})\}$.
Suppose that $\mathbf u \in \mathscr W_n$.
Then $\mathbf v \in \mathscr W_n$.
\end{lemma}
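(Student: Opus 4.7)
The strategy is to first exploit the isoterm $xzytxy$ (supplied by Lemma~\ref{lem:isotrems-for-M(W_n)Id(U_n)}) to reduce $\mathbf v$ to an almost-complete form, and then pin down the remaining structure by projecting onto carefully chosen subalphabets so as to invoke the other isoterms. Write $\mathbb V := \mathbb M(\mathscr W_n)\{\mathsf{Id}(\upsilon_{\mathscr W_n})\}$ for brevity.

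My first step is to observe that, since $xzytxy$ is an isoterm for $\mathbb V$, Lemma~\ref{lem:M(W)} yields $\mathbb M(xzytxy)\subseteq\mathbb V$, so the identity $\mathbf u\approx\mathbf v$ holds in $\mathbb M(xzytxy)$ as well. Applying Lemma~\ref{lem:FIC(M(xzytxy))-class} then gives $\mathbf v = \mathbf p\,\mathbf v'\,\mathbf q\mathbf r$, where $\mathbf v'$ is a linear word with content $\{a, a_i, b, b_i, x_1^{(i)}, x_2^{(i)} \mid 1\le i\le n\}$. So it suffices to show that $\mathbf v'$ has the middle-block form of some $\mathbf w_\eta$, namely
$\bigl(\prod_{i=1}^n a_i\bigr)\, a\, \bigl(\prod_{i=1}^n x_{1\eta_i}^{(i)}x_{2\eta_i}^{(i)}\bigr)\, b\, \bigl(\prod_{i=1}^n b_i\bigr)$
for some $\eta\in S_2^n$.

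To determine the order of variables inside $\mathbf v'$, I project the identity $\mathbf u\approx\mathbf v$ onto well-chosen subsets $\mathscr A\subseteq \mathsf{con}(\mathbf u)$, using the standard fact that every factor of an isoterm is itself an isoterm. Since $\mathbf p$, $\mathbf q$ and $\mathbf r$ are already fixed, the projection $\mathbf v(\mathscr A)$ is determined by $\mathbf v'(\mathscr A)$ together with the known contributions from $\mathbf p$, $\mathbf q$ and $\mathbf r$; choosing $\mathscr A$ so that $\mathbf u(\mathscr A)$ becomes a factor of one of the five isoterms of Lemma~\ref{lem:isotrems-for-M(W_n)Id(U_n)} forces $\mathbf v(\mathscr A)=\mathbf u(\mathscr A)$ and thereby pins down $\mathbf v'(\mathscr A)$. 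For instance, taking $\mathscr A=\{a_i,a_j,s_i\}$ with $i<j$ gives $\mathbf u(\mathscr A)=a_ia_js_ia_ia_j$, a word of the pattern $xyzxy$ (a factor of the isoterm $xyzxty$); this forces $a_i$ to precede $a_j$ in $\mathbf v'$, and an analogous projection orders the $b_i$'s. Similar projections involving $a$ (resp.\ $b$) together with one of the $a_k$'s (resp.\ $b_k$'s) or an $x$-variable, aided by a simple variable from $\mathbf q$, fix the positions of $a$ and $b$ relative to the $a_k$'s, $b_k$'s and $x$-variables. Projections of the form $\{x_1^{(i)},x_2^{(i)},s_i\}$ yield only the pattern $xyzxy$ (or $yxzxy$), permitting each pair $\{x_1^{(i)},x_2^{(i)}\}$ to occur in $\mathbf v'$ in either internal order, which is what produces the freedom encoded by $\eta_i\in S_2$.

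The main technical obstacle is preventing the $x$-pairs $\{x_1^{(i)},x_2^{(i)}\}$ from being interleaved across different indices $i$: the isoterms $xy$, $xyx$, $xyzxty$ and $xzytxy$ only constrain a small number of $x$-variables at a time. This is precisely the role of the eight-variable isoterm $xytzsxzy$. Carefully selected projections of two $x$-pairs together with variables drawn from $\mathbf q$ (and exploiting the second copies of the $x$-variables appearing in $\mathbf r$) reduce $\mathbf u(\mathscr A)$ to a factor of $xytzsxzy$, and this forces each pair $\{x_1^{(i)},x_2^{(i)}\}$ to be a contiguous block of $\mathbf v'$ and forces the blocks to appear in the order $1,2,\ldots,n$. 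Combining all of the above constraints, $\mathbf v'$ must take the claimed form, so $\mathbf v = \mathbf w_\eta \in \mathscr W_n$.
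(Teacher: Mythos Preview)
Your overall strategy coincides with the paper's: first invoke Lemma~\ref{lem:FIC(M(xzytxy))-class} to write $\mathbf v=\mathbf p\,\mathbf v'\,\mathbf q\mathbf r$, then pin down the order of variables in $\mathbf v'$ by projecting onto small subalphabets so that the image is a known isoterm. However, the specific projections you choose do not work.

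The projection $\mathscr A=\{a_i,a_j,s_i\}$ gives $\mathbf u(\mathscr A)=a_ia_js_ia_ia_j$, which up to renaming is $xyzxy$. You claim this is a factor of the isoterm $xyzxty$; it is not (the letter $t$ separates the second $x$ from the second $y$). More seriously, $xyzxy$ is \emph{not} an isoterm for $\mathbb V=\mathbb M(\mathscr W_n)\{\mathsf{Id}(\upsilon_{\mathscr W_n})\}$ at all. Indeed, pick any $\xi,\eta\in S_2^n$ with $\xi_k\neq\eta_k$ and project the identity $\mathbf w_\xi\approx\mathbf w_\eta\in\mathsf{Id}(\upsilon_{\mathscr W_n})$ onto $\{x_1^{(k)},x_2^{(k)},s_k\}$: you obtain $x_1^{(k)}x_2^{(k)}s_kx_1^{(k)}x_2^{(k)}\approx x_2^{(k)}x_1^{(k)}s_kx_1^{(k)}x_2^{(k)}$, i.e.\ $xyzxy\approx yxzxy$. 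So your projection cannot distinguish $a_ia_j$ from $a_ja_i$ in $\mathbf v'$, and the argument breaks down. (The same issue explains why your projection $\{x_1^{(i)},x_2^{(i)},s_i\}$ leaves the internal order of each $x$-pair free---but that freedom is not a feature of your choice of $\mathscr A$, it is a symptom of $xyzxy$ failing to be an isoterm.)

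The paper avoids this by using richer five-variable projections that hit the genuine isoterm $xytzsxzy$. For example, to force $(_{1\mathbf v}a_i)<(_{1\mathbf v}a_{i+1})$ one takes $\mathscr A=\{a_i,a_{i+1},s_i,t,y_i\}$, so that $\mathbf u(\mathscr A)=a_ia_{i+1}\,s_iy_it\,a_iy_ia_{i+1}$, which is $xytzsxzy$ up to renaming. Analogous five-variable projections handle all the remaining adjacencies you list, including the non-interleaving of the $x$-pairs. The extra variable $y_i$ (together with $t$) inserted between the two copies of the $\{a_i,a_{i+1}\}$-block is exactly what upgrades the pattern from the non-isoterm $xyzxy$ to the isoterm $xytzsxzy$, and this is the missing idea in your argument.
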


\begin{proof}
According to Lemma~\ref{lem:isotrems-for-M(W_n)Id(U_n)}, the words $xytzsxzy$ and $xzytxy$ are isoterms for the variety $\mathbb M(\mathscr W_n)\{\mathsf{Id}(\upsilon_{\mathscr W_n})\}$.
Then Lemmas~\ref{lem:M(W)} and~\ref{lem:FIC(M(xzytxy))-class} apply, yielding that $\mathbf v=\mathbf p\mathbf v^\prime \mathbf q\mathbf r$, where the words $\mathbf p$, $\mathbf q$ and $\mathbf r$ are defined by the equalities~\eqref{eq:p=},~\eqref{eq:q=} and~\eqref{eq:r=}, respectively, while $\mathbf v^\prime$ is a linear word with $\mathsf{con}(\mathbf v^\prime)=\{a,a_i,b,b_i,x_1^{(i)},x_2^{(i)}\mid 1\le i\le n\}$.
Further, $(_{1\mathbf v}a_i)<(_{1\mathbf v}a_{i+1})$ for any $i=1,2,\dots,n-1$ since $\mathbf u(a_i,a_{i+1},s_i,t,y_i)$ coincides (up to renaming of variables) with the word $xytzsxzy$ which is an isoterm for $\mathbb M(\mathscr W_n)\{\mathsf{Id}(\upsilon_{\mathscr W_n})\}$.
By a similar argument we can show that 
\begin{align*}
&(_{1\mathbf v}a_n)<(_{1\mathbf v}a),\ (_{1\mathbf v}a)<(_{1\mathbf v}x_1^{(1)}),\ (_{1\mathbf v}a)<(_{1\mathbf v}x_2^{(1)}),\\
&(_{1\mathbf v}x_1^{(i)})<(_{1\mathbf v}x_1^{(i+1)}),\ (_{1\mathbf v}x_1^{(i)})<(_{1\mathbf v}x_2^{(i+1)}),\\
&(_{1\mathbf v}x_2^{(i)})<(_{1\mathbf v}x_1^{(i+1)}),\ (_{1\mathbf v}x_2^{(i)})<(_{1\mathbf v}x_2^{(i+1)}),\\
&(_{1\mathbf v}x_1^{(n)})<(_{1\mathbf v}b),\ (_{1\mathbf v}x_2^{(n)})<(_{1\mathbf v}b),\ (_{1\mathbf v}b)<(_{1\mathbf v}b_1),\ (_{1\mathbf v}b_i)<(_{1\mathbf v}b_{i+1})
\end{align*}
for any $i=1,2,\dots,n-1$.
It follows that $\mathbf v =\mathbf w_{\eta}$ for some $\eta\in S_2^n$ and so $\mathbf v\in\mathscr W_n$.
\end{proof}

\begin{lemma}
\label{lem:directly}
Let $n\ge 2$, $\xi,\zeta,\eta\in S_2^n$ and $\mathbf w\in\mathscr X^\ast$.
Assume that $\mathbf w_\zeta=\mathbf a\varphi(\mathbf w_\xi)\mathbf b$ and $\mathbf w=\mathbf a\varphi(\mathbf w_\eta)\mathbf b$ for some words $\mathbf a,\mathbf b\in \mathscr X^\ast$ and substitution $\varphi\colon \mathscr X \to \mathscr X^\ast$.  
If the identity $\mathbf w_\zeta \approx \mathbf w$ is non-trivial, then $\varphi$ is the identity map on $\mathsf{con}(\mathbf w_\xi)$ and so $\mathbf a=\mathbf b=1$ and $(\mathbf w_\zeta,\mathbf w)=(\mathbf w_\xi,\mathbf w_\eta)$.
\end{lemma}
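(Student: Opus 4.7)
My plan is to show that $\mathbf a=\mathbf b=1$ and that $\varphi$ restricted to $\mathsf{con}(\mathbf w_\xi)$ is the identity; the stated consequences $(\mathbf w_\zeta,\mathbf w)=(\mathbf w_\xi,\mathbf w_\eta)$ then follow at once because $\mathsf{con}(\mathbf w_\eta)=\mathsf{con}(\mathbf w_\xi)$, and the non-triviality hypothesis $\mathbf w_\zeta\neq \mathbf w$ simply becomes $\xi\neq\eta$. Throughout, the argument leans on the very rigid structure of $\mathbf w_\xi$ together with the fact that $\mathbf w_\xi$ and $\mathbf w_\zeta$ coincide letter-for-letter outside the short middle block $\prod_{i=1}^n x_{1\zeta_i}^{(i)}x_{2\zeta_i}^{(i)}$.

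First I would set up length bookkeeping. A direct count from~\eqref{eq:p=}--\eqref{eq:r=} gives $|\mathbf w_\xi|=|\mathbf w_\zeta|=20n+8$ independently of $\xi,\zeta$, and every variable in either word appears exactly once or twice. From $\mathbf w_\zeta=\mathbf a\varphi(\mathbf w_\xi)\mathbf b$ I get
\[
|\mathbf a|+|\mathbf b|+\sum_{v\in\mathsf{con}(\mathbf w_\xi)}\mathsf{occ}_v(\mathbf w_\xi)\,|\varphi(v)|\;=\;\sum_{v\in\mathsf{con}(\mathbf w_\xi)}\mathsf{occ}_v(\mathbf w_\xi),
\]
so to win it suffices to prove $|\varphi(v)|=1$ for every $v\in\mathsf{con}(\mathbf w_\xi)$, after which $\mathbf a=\mathbf b=1$ is forced.

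The main obstacle is establishing $|\varphi(v)|=1$ everywhere: a priori, some images could be empty while others have length $\geq 2$ in a length-preserving way. The first leverage comes from multiple variables: if $v\in\mathsf{mul}(\mathbf w_\xi)$, every letter $y$ occurring in $\varphi(v)$ appears twice in $\varphi(\mathbf w_\xi)$, hence at most twice in $\mathbf w_\zeta$, hence $y\in\mathsf{mul}(\mathbf w_\zeta)$; moreover $y$ cannot appear in $\mathbf a\mathbf b$ or in $\varphi(v')$ for any other $v'$. Counting multiplicities then yields $\sum_{v\in\mathsf{mul}(\mathbf w_\xi)}|\varphi(v)|\leq|\mathsf{mul}(\mathbf w_\xi)|$. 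To rule out vanishing images altogether, I would perform a case analysis using the rigid positional skeleton of $\mathbf w_\xi$: each simple variable occupies a prescribed position in $\mathbf w_\zeta$ (namely, the same position it occupies in $\mathbf w_\xi$, since that position lies outside the middle block), so a vanished $\varphi(v_0)$ would force the unique position of $v_0$ to be absorbed into $\mathbf a$ or $\mathbf b$; tracking the consequent shifts in the positions of neighbouring variables (determined by the adjacency pattern of $\mathbf p$, $\mathbf q$, $\mathbf r$) produces a contradiction. Combining this with the multiplicity inequality and the length equation then yields $|\varphi(v)|=1$ everywhere and $\mathbf a=\mathbf b=1$.

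With $\mathbf a=\mathbf b=1$ and $|\varphi(v)|=1$ for each $v$, the equation $\mathbf w_\zeta=\varphi(\mathbf w_\xi)$ becomes a letter-for-letter match of words of length $20n+8$. The fixed prefix $\mathbf p\bigl(\prod_{i=1}^n a_i\bigr)a$ and the fixed suffix $b\bigl(\prod_{i=1}^n b_i\bigr)\mathbf q\mathbf r$, which coincide in $\mathbf w_\xi$ and $\mathbf w_\zeta$, pin down $\varphi(v)=v$ for every variable appearing in them; since the variables $x_1^{(i)},x_2^{(i)}$ also occur in $\mathbf r$ in a prescribed order, this fixes $\varphi$ on them as well. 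Hence $\varphi|_{\mathsf{con}(\mathbf w_\xi)}=\mathrm{id}$, giving $\mathbf w_\zeta=\mathbf w_\xi$ and $\mathbf w=\mathbf w_\eta$ as required.
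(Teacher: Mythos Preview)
Your length bookkeeping is sound, and the observation that images of multiple variables of $\mathbf w_\xi$ must land inside $\mathsf{mul}(\mathbf w_\zeta)$ with disjoint contents is correct and useful. The gap is in the step where you ``rule out vanishing images altogether.'' Your argument there does not use the non-triviality hypothesis, yet without it the claim $|\varphi(v)|=1$ for every $v\in\mathsf{con}(\mathbf w_\xi)$ is simply false: take $\mathbf a=\mathbf w_\zeta$, $\mathbf b=1$ and $\varphi(v)=1$ for all $v$. Then $\mathbf a\varphi(\mathbf w_\xi)\mathbf b=\mathbf w_\zeta$ holds, $\mathbf w=\mathbf w_\zeta$, and the identity is trivial---so this is not a counterexample to the lemma, but it shows that any argument for $|\varphi(v)|=1$ \emph{must} invoke $\mathbf w_\zeta\ne\mathbf w$ somewhere. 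Your sketch does not. Moreover the specific assertion ``a vanished $\varphi(v_0)$ would force the unique position of $v_0$ to be absorbed into $\mathbf a$ or $\mathbf b$'' conflates the position of the source letter $v_0$ in $\mathbf w_\xi$ with the position of the target letter $v_0$ in $\mathbf w_\zeta$; a priori the letter $v_0\in\mathsf{sim}(\mathbf w_\zeta)$ could just as well be contributed by $\varphi(v')$ for some other simple variable $v'$, so there is no contradiction from position-tracking alone.

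The paper's proof avoids this by using non-triviality right at the start: from $\mathbf w_\zeta\ne\mathbf w$ one first deduces (via Lemma~\ref{lem:FIC(M(W_n)Id(U_n))-class}) that $\mathbf w=\mathbf w_\nu$ for some $\nu\ne\zeta$, which singles out an index $j$ with $\nu_j\ne\zeta_j$. This forces $\varphi(x_1^{(k)})=x_1^{(j)}$ and $\varphi(x_2^{(k)})=x_2^{(j)}$ for some $k$ with $\xi_k\ne\eta_k$, giving a concrete anchor from which the argument propagates through the rigid factor $z_ka_kz_k'b_kz_k''$ of $\mathbf r$ to conclude $k=j$, then $\xi=\zeta$, and finally $\varphi=\mathrm{id}$ on $\mathsf{con}(\mathbf w_\xi)$. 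To repair your approach you would need to insert exactly such an anchor before the global length argument can bite.
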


\begin{proof}
Since the identity $\mathbf w_{\zeta} \approx \mathbf w$ is nontrivial, Proposition~\ref{prop:deduction} and Lemma~\ref{lem:FIC(M(W_n)Id(U_n))-class} imply that $\mathbf w=\mathbf w_{\nu}$ for some $\nu\in S_2^n\setminus\{\zeta\}$.
Then there is $j\in\{1,2,\dots,n\}$ such that $(1\nu_j,2\nu_j)=(2\zeta_j,1\zeta_j)$.
This is only possible when $x_{1\zeta_j}^{(j)}\in\mathsf{con}(\varphi(x_{1\xi_k}^{(k)}))$ and $x_{2\zeta_j}^{(j)}\in\mathsf{con}(\varphi(x_{2\xi_k}^{(k)}))$ for some $k\in\{1,2,\dots,n\}$ with $(1\xi_k,2\xi_k)\ne(1\eta_k,2\eta_k)$.
We note that the word $\mathbf w_{\zeta}$ is square-free and every factor of length $>1$ of $\mathbf w_{\zeta}$ has exactly one occurrence in $\mathbf w_{\zeta}$.
It follows that
\begin{itemize}
\item[\textup{($\ast$)}] $\varphi(c)$ is either the empty word~$1$ or a variable for any $c\in\mathsf{mul}(\mathbf w_{\zeta})$.
\end{itemize}
In view of this fact, $x_{1\zeta_j}^{(j)}=\varphi(x_{1\xi_k}^{(k)})$ and $x_{2\zeta_j}=\varphi(x_{2\xi_k}^{(k)})$.
Further, since $(_{2\mathbf w_{\zeta}}x_1^{(j)})<(_{2\mathbf w_{\zeta}}x_2^{(j)})$ and $(_{2\mathbf w_{\xi}}x_1^{(k)})<(_{2\mathbf w_{\xi}}x_2^{(k)})$, we have $x_1^{(j)}=\varphi(x_1^{(k)})$ and $x_2=\varphi(x_2^{(k)})$ (this means that $\xi_k=\zeta_j$).
Hence
\[
\varphi(z_k a_k z_k^\prime b_kz_k^{\prime\prime})=z_j a_j z_j^\prime b_jz_j^{\prime\prime}.
\]
It follows from~($\ast$) that $\varphi(a_k)=a_j$, $\varphi(b_k)=b_j$, $\varphi(z_k)=z_j$, $\varphi(z_k^\prime)=z_j^\prime$ and $\varphi(z_k^{\prime\prime})=z_j^{\prime\prime}$.
Then 
\[
\varphi\biggl(\biggl(\prod_{i=k+1}^na_i\biggr)\,a\, \biggl(\prod_{i=1}^nx_{1\xi_i}^{(i)}x_{2\xi_i}^{(i)}\biggr)\,b\,\biggl(\prod_{i=1}^{k-1}b_i\biggr)\biggr)=\biggl(\prod_{i=j+1}^na_i\biggr)\,a\, \biggl(\prod_{i=1}^nx_{1\zeta_i}^{(i)}x_{2\zeta_i}^{(i)}\biggr)\,b\,\biggl(\prod_{i=1}^{j-1}b_i\biggr).
\]
If $k>j+1$, then $\varphi(b_{k-j})=x_{2\zeta_n}^{(n)}$ and $\varphi(b_{k-j+1})=b_1$ contradicting the fact that $(_{2\mathbf w_\xi}b_{k-j})<(_{2\mathbf w_\xi}b_{k-j+1})$ and $(_{2\mathbf w_\zeta}b_1)<(_{2\mathbf w_\zeta}x_{2\zeta_n}^{(n)})$. 
If $k=j+1$, then $\varphi(b)=x_{2\zeta_n}^{(n)}$ and $\varphi(b_1)=b$ contradicting the fact that $(_{2\mathbf w_\xi}b)<(_{2\mathbf w_\xi}b_1)$ and $(_{2\mathbf w_\zeta}b_1)<(_{2\mathbf w_\zeta}x_{2\zeta_n}^{(n)})$. 
Hence $k\le j$.
By a similar argument we can show that $j\le k$ and, therefore, $k=j$.
Then $\varphi(x_{1\xi_i}^{(i)})=x_{1\zeta_i}^{(i)}$ and $\varphi(x_{2\xi_i}^{(i)})=x_{2\zeta_i}^{(i)}$ for any $i=1,2,\dots,n$ by~($\ast$).
Since $(_{2\mathbf w_{\zeta}}x_1^{(i)})<(_{2\mathbf w_{\zeta}}x_2^{(i)})$ and $(_{2\mathbf w_{\xi}}x_1^{(i)})<(_{2\mathbf w_{\xi}}x_2^{(i)})$, this implies that $\xi=\zeta$ and
\[
\varphi(z_i a_i z_i^\prime b_iz_i^{\prime\prime})=z_i a_i z_i^\prime b_iz_i^{\prime\prime}
\]
for any $i=1,2,\dots,n$.
Now~($\ast$) applies again, yielding that $\varphi(a_i)=a_i$, $\varphi(b_i)=b_i$, $\varphi(z_i)=z_i$, $\varphi(z_i^\prime)=z_i^\prime$, $\varphi(z_i^{\prime\prime})=z_i^{\prime\prime}$ for any $i=1,2,\dots,n$.
It follows that $\varphi(a)=a$, $\varphi(b)=b$ and $\varphi(y_i)=y_i$ for all $i=1,2,\dots,n-1$.
Hence $\varphi(y_0)=y_0$, $\varphi(y_n)=y_n$ and so $\varphi(t)=t$, $\varphi(t_i)=t_i$, $\varphi(t_i^\prime)=t_i^\prime$, $\varphi(t_i^{\prime\prime})=t_i^{\prime\prime}$, $\varphi(s_i)=s_i$.
Thus, $\varphi$ is the identity map on $\mathsf{con}(\mathbf w_\xi)$.
Hence $\mathbf a=\mathbf b=1$ and so $(\mathbf w_\zeta,\mathbf w)=(\varphi(\mathbf w_\xi),\varphi(\mathbf w_\eta))=(\mathbf w_\xi,\mathbf w_\eta)$ as required.
\end{proof}

\begin{corollary}
\label{cor:FIC(M(W_n)Id(pi))-class}
Let $n\ge2$ and $\mathbf u \approx \mathbf v$ be an identity of $\mathbb M(\mathscr W_n)\{\mathsf{Id}(\pi)\}$ for some $\pi \in \mathfrak{Eq}(\mathscr W_n)$.
Suppose that $\mathbf u \in \mathscr W_n$.
Then $\mathbf v \in \mathscr W_n$ and $(\mathbf u,\mathbf v) \in \pi$.
\end{corollary}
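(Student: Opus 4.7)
The plan is to first pin down the shape of $\mathbf v$, then chase a Birkhoff deduction while using Lemma~\ref{lem:directly} to keep everything inside $\mathscr W_n$. Since $\pi\subseteq\upsilon_{\mathscr W_n}$, we have $\mathsf{Id}(\pi)\subseteq\mathsf{Id}(\upsilon_{\mathscr W_n})$, so $\mathbb M(\mathscr W_n)\{\mathsf{Id}(\upsilon_{\mathscr W_n})\}$ is a subvariety of $\mathbb M(\mathscr W_n)\{\mathsf{Id}(\pi)\}$. Hence the identity $\mathbf u\approx\mathbf v$ is also satisfied by $\mathbb M(\mathscr W_n)\{\mathsf{Id}(\upsilon_{\mathscr W_n})\}$, and Lemma~\ref{lem:FIC(M(W_n)Id(U_n))-class} forces $\mathbf v\in\mathscr W_n$. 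If $\mathbf u=\mathbf v$, then $(\mathbf u,\mathbf v)\in\pi$ by reflexivity, so from now on I assume $\mathbf u\ne\mathbf v$.

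Next I observe, via Lemma~\ref{lem:M(W)} applied to the tautological inclusion $\mathbb M(\mathscr W_n)\subseteq\mathbb M(\mathscr W_n)$, that every word in $\mathscr W_n$ is an isoterm for $\mathbb M(\mathscr W_n)$. By Birkhoff's theorem (Proposition~\ref{prop:deduction}), $\mathbf u\approx\mathbf v$ is deducible from the defining set $\Sigma:=\mathsf{Id}(\mathbb M(\mathscr W_n))\cup\mathsf{Id}(\pi)$ of $\mathbb M(\mathscr W_n)\{\mathsf{Id}(\pi)\}$: fix a sequence $\mathbf u=\mathbf w_0,\mathbf w_1,\ldots,\mathbf w_m=\mathbf v$ of pairwise distinct words in which each step is directly deduced from some identity in $\Sigma$. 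I then prove by induction on $i$ that $\mathbf w_i\in\mathscr W_n$, and that $(\mathbf w_{i-1},\mathbf w_i)\in\pi$ for each $i\ge 1$; transitivity of $\pi$ will then give $(\mathbf u,\mathbf v)\in\pi$ as required.

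For the inductive step, suppose $\mathbf w_i\in\mathscr W_n$, and let $\mathbf s\approx\mathbf t\in\Sigma$ be the identity used at the step $i\to i+1$. If $\mathbf s\approx\mathbf t$ belongs to $\mathsf{Id}(\mathbb M(\mathscr W_n))$, then $\mathbf w_i\approx\mathbf w_{i+1}$ is itself an identity of $\mathbb M(\mathscr W_n)$; but $\mathbf w_i$ is an isoterm for this variety, which forces $\mathbf w_{i+1}=\mathbf w_i$ and contradicts the distinctness built into the deduction. Hence $\{\mathbf s,\mathbf t\}=\{\mathbf w_\xi,\mathbf w_\eta\}$ for some pair $(\mathbf w_\xi,\mathbf w_\eta)\in\pi$, and Lemma~\ref{lem:directly}, applied to the non-trivial direct deduction from $\mathbf w_\xi\approx\mathbf w_\eta$ starting at the word $\mathbf w_i\in\mathscr W_n$, yields both $\mathbf w_{i+1}\in\mathscr W_n$ and $(\mathbf w_i,\mathbf w_{i+1})=(\mathbf w_\xi,\mathbf w_\eta)\in\pi$. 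The only real obstacle is excluding the case of a deduction step that uses an identity from $\mathsf{Id}(\mathbb M(\mathscr W_n))$, but the isoterm observation together with the distinctness of the words in a Birkhoff deduction dispatches it immediately; after that, Lemma~\ref{lem:directly} does all the remaining work.
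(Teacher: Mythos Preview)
Your proof is correct and follows essentially the same route as the paper: a Birkhoff deduction chain from the identities of $\mathbb M(\mathscr W_n)$ together with $\mathsf{Id}(\pi)$, with the isoterm observation ruling out steps from $\mathbb M(\mathscr W_n)$ and Lemma~\ref{lem:directly} forcing each remaining step to literally be a pair in $\pi$. The only cosmetic difference is that the paper invokes Lemma~\ref{lem:FIC(M(W_n)Id(U_n))-class} once to place all intermediate words in $\mathscr W_n$ before applying Lemma~\ref{lem:directly}, whereas you fold that into a single induction; both organizations are fine.
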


\begin{proof}
In view of Proposition~\ref{prop:deduction}, there is some finite sequence $\mathbf u = \mathbf v_0, \mathbf v_1, \ldots, \mathbf v_m = \mathbf v$ of distinct words such that each identity $\mathbf v_i \approx \mathbf v_{i+1}$ is either holds in $\mathbb M(\mathscr W_n)$ or directly deducible from some identity in~$\mathsf{Id}(\pi)$.
According to Lemma~\ref{lem:FIC(M(W_n)Id(U_n))-class}, the word $\mathbf v_i$ belongs to $\mathscr W_n$ for any $i=0,1,\dots,m$.
Then Lemma~\ref{lem:M(W)} and the fact that the words $\mathbf v_0,\mathbf v_1,\dots,\mathbf v_m$ are pairwise distinct imply that $\mathbb M(\mathscr W_n)$ violates $\mathbf v_i \approx \mathbf v_{i+1}$ for any $i=0,1,\dots,m-1$.
Therefore, $\mathbf v_i \approx \mathbf v_{i+1}$ is directly deducible from some identity in~$\mathsf{Id}(\pi)$.
Now Lemma~\ref{lem:directly} applies, yielding that $(\mathbf v_i, \mathbf v_{i+1})\in \pi$, whence $(\mathbf u,\mathbf v) \in \pi$.
\end{proof}

\begin{lemma}
\label{lem:three-isotrems}
Let $n\ge2$ and $\zeta\in S_2^n$.
A word $\mathbf w$ is an isoterm for the variety $\mathbb M(\mathscr W_n)\{\mathsf{Id}(\upsilon_{\mathscr W_n})\}$ if one of the following holds:
\begin{itemize}
\item[\textup{(i)}] $\mathbf w$ is obtained from $\mathbf w_\zeta$ by replacing some occurrence of a multiple variable with a variable $h\notin\mathsf{con}(\mathbf w_\zeta)$;
\item[\textup{(ii)}] $\mathbf w$ is obtained from $\mathbf w_\zeta$ by replacing some factor of length $>1$ with a variable $h\notin\mathsf{con}(\mathbf w_\zeta)$;
\item[\textup{(iii)}] $\mathbf w$ is a proper factor of $\mathbf w_\zeta$.
\end{itemize}
\end{lemma}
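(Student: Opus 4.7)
By Lemma~\ref{lem:M(W)}, proving the lemma is equivalent to establishing the inclusion $\mathbb M(\mathbf w)\subseteq \mathbb M(\mathscr W_n)\{\mathsf{Id}(\upsilon_{\mathscr W_n})\}$. Since the right-hand side is obtained from $\mathbb M(\mathscr W_n)$ by imposing the identities in $\mathsf{Id}(\upsilon_{\mathscr W_n})$, this inclusion splits into \textbf{(A)}~$\mathbb M(\mathbf w)\subseteq \mathbb M(\mathscr W_n)$---equivalently, by another application of Lemma~\ref{lem:M(W)}, $\mathbf w$ is an isoterm for $\mathbb M(\mathscr W_n)$---and \textbf{(B)}~$M(\mathbf w)\models \mathbf w_\xi\approx \mathbf w_\eta$ for every $\xi,\eta\in S_2^n$. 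Both parts are verified case by case, following the template of Lemma~\ref{lem:isotrems-for-M(W_n)Id(U_n)}.

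For (A) in case (iii), the assignment that fixes every factor of $\mathbf w$ and sends every other non-zero element of $M(\mathbf w_\zeta)$ to $0$ is a surjective monoid homomorphism $M(\mathbf w_\zeta)\twoheadrightarrow M(\mathbf w)$; combined with the trivial observation that $\mathbf w_\zeta\in\mathscr W_n$ is an isoterm for $\mathbb M(\mathscr W_n)$, this yields $\mathbb M(\mathbf w)\subseteq \mathbb M(\mathbf w_\zeta)\subseteq \mathbb M(\mathscr W_n)$. For cases (i) and (ii), introduce the substitution $\sigma$ defined by $\sigma(h)=c$ (respectively $\sigma(h)=\mathbf f$) and $\sigma(v)=v$ on every other letter, so that $\sigma(\mathbf w)=\mathbf w_\zeta$. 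A candidate identity $\mathbf w\approx\mathbf w'$ of $\mathbb M(\mathscr W_n)$ then gives $\mathbf w_\zeta\approx \sigma(\mathbf w')$, and the isoterm property of $\mathbf w_\zeta$ forces $\sigma(\mathbf w')=\mathbf w_\zeta$. Variable-count preservation---a consequence of the fact that $\mathbb M(\mathscr W_n)$ satisfies $x^2\approx x^3$ while $x$ is an isoterm by Lemma~\ref{lem:isotrems-for-M(W_n)Id(U_n)}---pins the number of $h$'s in $\mathbf w'$ down to $1$, so $\mathbf w'$ is $\mathbf w_\zeta$ with exactly one occurrence of $c$ (respectively one occurrence of $\mathbf f$) replaced by $h$. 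A second substitution that retains only the immediate neighbours of the replaced position then distinguishes the candidate positions and forces $\mathbf w'=\mathbf w$.

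For (B), I would verify directly that every substitution $\psi\colon \mathscr X\to M(\mathbf w)$ sends $\mathbf w_\xi$ and $\mathbf w_\eta$ to the same element. The words $\mathbf w_\xi$ and $\mathbf w_\eta$ coincide outside the central block where they swap $x_1^{(i)}$ and $x_2^{(i)}$ for each index $i$ with $\xi_i\ne \eta_i$, so a distinguishing $\psi$ would have to map $x_1^{(i)}$ and $x_2^{(i)}$ to distinct non-zero elements of $M(\mathbf w)$ for at least one such $i$. Under any such $\psi$ both $\psi(\mathbf w_\xi)$ and $\psi(\mathbf w_\eta)$ vanish in $M(\mathbf w)$: in case (iii) the length bound $|\mathbf w|<|\mathbf w_\xi|$ together with the shape of $\mathbf w$ precludes any non-zero realisation of the central block, while in cases (i) and (ii) the modification at $h$ (respectively the collapsed factor $\mathbf f$) destroys the surrounding context needed to realise the block contiguously.

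The principal obstacle in cases (i) and (ii) is the non-injectivity of $\sigma$ on letters: since $\sigma(h)$ coincides with a letter (or string) already present in $\mathbf w$, the reduction $\sigma(\mathbf w')=\mathbf w_\zeta$ a priori admits several candidate positions for the unique $h$ in $\mathbf w'$. The neighbourhood-picking substitution resolves the ambiguity within $\mathbf w_\zeta$, but ruling out the possibility that $\mathbf w'$ in fact arises from some $\mathbf w_\nu\ne \mathbf w_\zeta$ additionally requires Corollary~\ref{cor:FIC(M(W_n)Id(pi))-class}, Lemma~\ref{lem:directly}, and the hypothesis that $c$ is multiple in $\mathbf w_\zeta$ (respectively that $\mathbf f$ has length greater than $1$). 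This combinatorial bookkeeping is the heart of the technical work.
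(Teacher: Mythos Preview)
Your decomposition into (A) and (B) is logically valid and is indeed the template used in Lemma~\ref{lem:isotrems-for-M(W_n)Id(U_n)}. Part~(A) is essentially the paper's argument, though you over-engineer case~(i): once you know $\sigma(\mathbf w')=\mathbf w_\zeta$ and that $h,c\in\mathsf{sim}(\mathbf w)=\mathsf{sim}(\mathbf w')$, there are only two candidates for $\mathbf w'$ (the two positions of $c$ in $\mathbf w_\zeta$), and the isoterm $xy$ alone---no ``neighbourhood-picking substitution''---forces the relative order of the simple letters $c,h$ to agree, so $\mathbf w'=\mathbf w$. This is the paper's two-line argument.

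The real gap is in (B). Your justifications (``the length bound $|\mathbf w|<|\mathbf w_\xi|$ precludes any non-zero realisation of the central block'', ``the modification at $h$ destroys the surrounding context'') are not arguments: substitutions into $M(\mathbf w)$ may send many letters to $1$, so the length bound is useless by itself, and nothing you write rules out a nontrivial substitution realising $\mathbf w_\xi$ inside~$\mathbf w$. What actually makes (B) work is Lemma~\ref{lem:directly}: if $\psi(\mathbf w_\xi)$ is a factor of $\mathbf w$ (hence of $\mathbf w_\zeta$, or of $\sigma(\mathbf w)=\mathbf w_\zeta$ after composing with your $\sigma$) and $\psi(\mathbf w_\xi)\neq\psi(\mathbf w_\eta)$, then Lemma~\ref{lem:directly} forces $\psi$ (or $\sigma\psi$) to be the identity on $\mathsf{con}(\mathbf w_\xi)$ and $\psi(\mathbf w_\xi)=\mathbf w_\zeta$. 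In case~(iii) this is impossible since $|\mathbf w|<|\mathbf w_\zeta|$; in case~(i) it is impossible because $c\in\mathsf{mul}(\mathbf w_\zeta)$ would then occur twice in the factor $\psi(\mathbf w_\xi)$ of $\mathbf w$, whereas $c$ occurs once in $\mathbf w$. So Lemma~\ref{lem:directly} is not an afterthought for edge cases but the core of~(B). Your citation of Corollary~\ref{cor:FIC(M(W_n)Id(pi))-class} is misplaced: that corollary concerns identities with one side in $\mathscr W_n$, which $\mathbf w$ is not.

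The paper organises the same content differently: rather than verifying~(B) in $M(\mathbf w)$, it first proves~(A), then invokes Proposition~\ref{prop:deduction} so that any nontrivial $\mathbf w\approx\mathbf w'$ in $\mathbb M(\mathscr W_n)\{\mathsf{Id}(\upsilon_{\mathscr W_n})\}$ must begin with a step directly deducible from some $\mathbf w_\xi\approx\mathbf w_\eta$, and derives a contradiction from Lemma~\ref{lem:directly} applied to $\psi\varphi$. Cases~(ii) and~(iii) are then reduced to case~(i): for~(ii) via the injective substitution $h\mapsto hd$ (so that the case-(i) word is $\psi(\mathbf w)$), and for~(iii) by observing that every proper factor of $\mathbf w_\zeta$ is a factor of a case-(i) word (replace the first or last letter by~$h$). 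Your plan and the paper's are equivalent in substance---both rest on Lemma~\ref{lem:directly}---but the paper's deduction framing avoids having to check the full identity $\mathbf w_\xi\approx\mathbf w_\eta$ in $M(\mathbf w)$ and instead localises the work to the single word~$\mathbf w$.
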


\begin{proof}
(i) The word $\mathbf w$ is obtained from $\mathbf w_\zeta$ by replacing some occurrence of a multiple variable $c$ with the variable $h\notin\mathsf{con}(\mathbf w_\zeta)$. 
Clearly, $\psi(\mathbf w)=\mathbf w_\zeta$, where $\psi\colon \mathscr X \to \mathscr X^\ast$ is the substitution defined by
\[
\psi(v):= 
\begin{cases} 
c & \text{if $v=h$}, \\ 
v & \text{if $v\ne h$}.
\end{cases} 
\]
Since $xy$ is an isoterm for $\mathbb M(\mathscr W_n)$ by Lemma~\ref{lem:isotrems-for-M(W_n)Id(U_n)} and $c,h\in\mathsf{sim}(\mathbf w)$, it follows that $\mathbf w$ is an isoterm for $\mathbb M(\mathscr W_n)$.
Hence, by Proposition~\ref{prop:deduction}, if $\mathbf w$ is not an isoterm for the variety $\mathbb M(\mathscr W_n)\{\mathsf{Id}(\upsilon_{\mathscr W_n})\}$, then some nontrivial identity $\mathbf w\approx \mathbf w^\prime$ is directly deducible from some identity of the form $\mathbf w_\xi\approx \mathbf w_\eta$.
By symmetry, we may assume that $\mathbf w=\mathbf a\varphi(\mathbf w_{\xi})\mathbf b$ and $\mathbf w^\prime=\mathbf a\varphi(\mathbf w_{\eta})\mathbf b$ for some words $\mathbf a,\mathbf b\in \mathscr X^\ast$ and substitution $\varphi\colon \mathscr X \to \mathscr X^\ast$. 
Hence $\mathbf w_\zeta=\psi(\mathbf w)=\psi(\mathbf a)\psi(\varphi(\mathbf w_\xi))\psi(\mathbf b)$.
Then $\psi(\mathbf w^\prime)\ne\psi(\mathbf w)$ because $xy$ is an isoterm for the variety defined by the identity $\mathbf w_\xi\approx \mathbf w_\eta$ by Lemma~\ref{lem:isotrems-for-M(W_n)Id(U_n)}, $c,h\in\mathsf{sim}(\mathbf w)$ and the identity $\mathbf w\approx \mathbf w^\prime$ is nontrivial.
Now Lemma~\ref{lem:directly} applies, yielding that the substitution $\psi\varphi$ is the identity map on $\mathsf{con}(\mathbf w_\xi)$ and so $\psi(\mathbf a)=\psi(\mathbf b)=1$ and $\mathbf w_\zeta=\mathbf w_\xi$.
Then $\mathbf a=\mathbf b=1$ by the definition of the substitution $\psi$.
Thus, $\mathbf w=\varphi(\mathbf w_\xi)$. 
Since $c\in\mathsf{sim}(\mathbf w)$, there is $c^\prime \in \mathsf{sim}(\mathbf w_\xi)$ such that $c\in\mathsf{con}(\varphi(c^\prime))$.
Clearly, $\psi(c)=c$.
Hence $c\in\mathsf{con}(\psi(\varphi(c^\prime)))$.
Since the substitution $\psi\varphi$ is the identity map on $\mathsf{con}(\mathbf w_\xi)$, we have $c=c^\prime$ contradicting the fact that $c\in\mathsf{mul}(\mathbf w_\zeta)=\mathsf{mul}(\mathbf w_\xi)$ and $c^\prime\in\mathsf{sim}(\mathbf w_\xi)$.

\smallskip

(ii) The word $\mathbf w$ is obtained from $\mathbf w_\zeta$ by replacing some factor $cd$ with the variable $h\notin\mathsf{con}(\mathbf w_\zeta)$. 
Since every factor of length $>1$ of $\mathbf w_\zeta$ contains a multiple variable, we may assume without any loss that $c\in \mathsf{mul}(\mathbf w_\zeta)$.
Then the word $\psi(\mathbf w)$ is obtained from $\mathbf w_\zeta$ by replacing an occurrence of $c$ with the variable $h$, where $\psi\colon \mathscr X \to \mathscr X^\ast$ is the substitution defined by
\[
\psi(v):= 
\begin{cases} 
hd & \text{if $v=h$}, \\ 
v & \text{if $v\ne h$}.
\end{cases} 
\]
By Part~(i), the word $\psi(\mathbf w)$ is an isoterm for $\mathbb M(\mathscr W_n)\{\mathsf{Id}(\upsilon_{\mathscr W_n})\}$.
Hence $\mathbf w$ is an isoterm for $\mathbb M(\mathscr W_n)\{\mathsf{Id}(\upsilon_{\mathscr W_n})\}$ as well.

\smallskip

(iii) Let $\mathbf w_1$ and $\mathbf w_2$ denote words obtained from $\mathbf w_\zeta$ by replacing the variables ${_{1\mathbf w_\zeta}z_1}$ and ${_{2\mathbf w_\zeta}a}$ with the variable $h$, respectively.
Clearly, if some proper factor of $\mathbf w_\zeta$ is not an isoterm for $\mathbb M(\mathscr W_n)\{\mathsf{Id}(\upsilon_{\mathscr W_n})\}$, then at least one of the words $\mathbf w_1$ or $\mathbf w_2$ is not an isoterm for $\mathbb M(\mathscr W_n)\{\mathsf{Id}(\upsilon_{\mathscr W_n})\}$ as well.
Thus, $\mathbf w$ is an isoterm for $\mathbb M(\mathscr W_n)\{\mathsf{Id}(\upsilon_{\mathscr W_n})\}$ by Part~(i).
\end{proof}

\begin{lemma}
\label{lem:u_C}
Let $n\ge2$ and $\mathbf u$ be a word such that $\mathbf u_{\mathscr C}=\mathbf w_\zeta$ for some $\zeta\in S_2^n$ and $\mathscr C\subseteq\mathsf{con}(\mathbf u)$.
Assume that the following three claims hold:
\begin{itemize}
\item[\textup{(a)}] every factor of length $>1$ of $\mathbf u$ has exactly one occurrence in $\mathbf u$;
\item[\textup{(b)}]  there are no simple variables between ${_{1\mathbf u}a_1}$ and ${_{1\mathbf u}b_n}$ and between ${_{2\mathbf u}b}$ and ${_{2\mathbf u}a}$ in $\mathbf u$;
\item[\textup{(c)}] for some $c\in\mathscr C$, either $({_{1\mathbf u}a_1})<({_{1\mathbf u}c})<({_{1\mathbf u}b_n})$ or $({_{2\mathbf u}b})<({_{2\mathbf u}c})<({_{2\mathbf u}a})$.
\end{itemize}
If $\mathbf u\approx \mathbf v$ is a nontrivial identity directly deducible from some identity of the form $\mathbf w_\xi\approx \mathbf w_\eta$ with $\xi,\eta\in S_2^n$, then $\mathbf v_{\mathscr C}=\mathbf w_\zeta$.
\end{lemma}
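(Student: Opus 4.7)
The plan is to combine the erasure map that defines $\mathbf u_{\mathscr C}$ with Lemma~\ref{lem:directly}. First, direct deducibility gives words $\mathbf a, \mathbf b \in \mathscr X^\ast$ and a substitution $\varphi$ with $\mathbf u = \mathbf a\,\varphi(\mathbf w_\xi)\,\mathbf b$ and $\mathbf v = \mathbf a\,\varphi(\mathbf w_\eta)\,\mathbf b$; nontriviality of $\mathbf u \approx \mathbf v$ forces $\xi \ne \eta$. Let $\pi\colon \mathscr X^\ast \to \mathscr X^\ast$ be the monoid homomorphism erasing every letter of $\mathscr C$ and fixing every other letter, so that $\pi(\mathbf w) = \mathbf w_{\mathscr C}$, and set $\psi(x) := \pi(\varphi(x))$. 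Then
\[
\pi(\mathbf u) = \pi(\mathbf a)\,\psi(\mathbf w_\xi)\,\pi(\mathbf b) = \mathbf w_\zeta \quad\text{and}\quad \pi(\mathbf v) = \pi(\mathbf a)\,\psi(\mathbf w_\eta)\,\pi(\mathbf b).
\]
If $\psi(\mathbf w_\xi) = \psi(\mathbf w_\eta)$ then $\pi(\mathbf v) = \pi(\mathbf u) = \mathbf w_\zeta$ and the lemma is proved; the rest of the argument will rule out the alternative $\psi(\mathbf w_\xi) \ne \psi(\mathbf w_\eta)$.

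Assume $\psi(\mathbf w_\xi) \ne \psi(\mathbf w_\eta)$. Then $\mathbf w_\zeta \approx \pi(\mathbf v)$ is a nontrivial identity directly deducible from $\mathbf w_\xi \approx \mathbf w_\eta$ via the substitution $\psi$, so Lemma~\ref{lem:directly} applies and yields $\pi(\mathbf a) = \pi(\mathbf b) = 1$, $\zeta = \xi$, and $\psi$ acting as the identity on $\mathsf{con}(\mathbf w_\xi)$. Consequently $\mathbf a$ and $\mathbf b$ are words over $\mathscr C$, and for every $d \in \mathsf{con}(\mathbf w_\xi)$ the word $\varphi(d)$ contains exactly one occurrence of $d$ with all other letters in $\mathscr C$.

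Next I would exploit condition~(a) to pin down $\varphi$ on the multiple letters. If $d \in \mathsf{mul}(\mathbf w_\xi)$ and $|\varphi(d)| \ge 2$, then the first two letters of $\varphi(d)$ form a length-$2$ factor of $\mathbf u$ that recurs at each of the (at least two) occurrences of $d$, contradicting~(a). Hence $|\varphi(d)| = 1$, and $\psi(d) = d$ forces $\varphi(d) = d$ for every multiple variable $d$ of $\mathbf w_\xi$. The simple variables of $\mathbf w_\xi$ are exactly $t_i, t_i^\prime, t_i^{\prime\prime}, s_i, t$, and inspection of the definitions~\eqref{eq:p=}--\eqref{eq:r=} shows that they all lie inside $\mathbf p$ (hence before $_{1\mathbf w_\xi}a_1$) or inside $\mathbf q$ (hence strictly between $_{1\mathbf w_\xi}b_n$ and $_{2\mathbf w_\xi}b$). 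Combined with $\varphi(d) = d$ on multiples, this confines every $\mathscr C$-letter of $\mathbf u$ to one of three zones: before $_{1\mathbf u}a_1$, strictly between $_{1\mathbf u}b_n$ and $_{2\mathbf u}b$, or after $_{2\mathbf u}a$. None of these zones meets either of the two intervals singled out in~(c), delivering the required contradiction.

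The main obstacle is the clean reduction via Lemma~\ref{lem:directly}: one must verify that the factorisation $\mathbf w_\zeta = \pi(\mathbf a)\,\psi(\mathbf w_\xi)\,\pi(\mathbf b)$ fits the hypotheses of that lemma, and that the substitution $\psi$ really does transport $\mathbf w_\xi \approx \mathbf w_\eta$ to $\mathbf w_\zeta \approx \pi(\mathbf v)$. Once that step is secured, condition~(a) disciplines $\varphi$ on the multiple letters, condition~(c) forces a $\mathscr C$-letter into a forbidden position, and condition~(b) plays the supporting role of ensuring consistency of the structural picture of $\mathbf u$ by preventing simple variables from encroaching on the two key intervals.
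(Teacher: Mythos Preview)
Your proof is correct and follows the same route as the paper: compose the erasure map with $\varphi$, invoke Lemma~\ref{lem:directly} on the resulting factorisation of $\mathbf w_\zeta$, then use condition~(a) to constrain $\varphi$ and contradict~(c). The only difference is organisational: you apply~(a) globally to force $\varphi(d)=d$ for every multiple letter $d$ of $\mathbf w_\xi$, which renders hypothesis~(b) superfluous in your argument (so your closing remark that~(b) ``plays a supporting role'' is off---in your version it plays none), whereas the paper argues locally about the single letter $c'$ whose $\varphi$-image contains the offending occurrence of $c$ and genuinely needs~(b) to push $\varphi(c')$ across the boundary $a_1$ or $b_n$.
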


\begin{proof}
By symmetry, we may assume that $\mathbf u=\mathbf a\varphi(\mathbf w_{\xi})\mathbf b$ and $\mathbf v=\mathbf a\varphi(\mathbf w_{\eta})\mathbf b$ for some words $\mathbf a,\mathbf b\in \mathscr X^\ast$ and substitution $\varphi\colon \mathscr X \to \mathscr X^\ast$. 
Then $\mathbf w_\zeta=\psi(\mathbf u)=\psi(\mathbf a)\psi(\varphi(\mathbf w_\xi))\psi(\mathbf b)$, where $\psi\colon \mathscr X \to \mathscr X^\ast$ is the substitution defined by 
\[
\psi(v):= 
\begin{cases} 
1 & \text{if $v\in\mathscr C$}, \\ 
v & \text{if $v\notin\mathscr C$}.
\end{cases} 
\]
Arguing by contradiction, suppose that $\mathbf v_{\mathscr C}=\psi(\mathbf v)\ne\mathbf w_\zeta$.
Then, by Lemma~\ref{lem:directly}, the substitution $\psi\varphi$ is the identity map on $\mathsf{con}(\mathbf w_\xi)$ and so $\psi(\mathbf a)=\psi(\mathbf b)=1$ and $\mathbf w_\zeta=\mathbf w_\xi$.
Hence $\mathsf{con}(\mathbf a\mathbf b)\subseteq\mathscr C$. 
Assume that $({_{1\mathbf u}a_1})<({_{1\mathbf u}c})<({_{1\mathbf u}b_n})$ for some $c\in\mathscr C$.
Then there is $c^\prime\in\mathsf{con}(\mathbf w_\xi)$ such that $\varphi$ maps some occurrence of $c^\prime$ to a factor of $\mathbf u$ containing ${_{1\mathbf u}c}$. 
The fact that $\psi(\varphi(c^\prime))=c^\prime$ implies that $\varphi(c^\prime)$ is a word of length $>1$.
By the condition of the lemma, the word $\mathbf u$ may contain at most one occurrence of the factor $\varphi(c^\prime)$.
This only possible when $c^\prime \in \mathsf{sim}(\mathbf w_\xi)$.
Since there are no simple variables between ${_{1\mathbf u}a_1}$ and ${_{1\mathbf u}b_n}$ in $\mathbf u$, it follows that $\mathsf{con}(\varphi(c^\prime))$ must contain either $a_1$ or $b_n$ contradicting $\psi(\varphi(c^\prime))=c^\prime$.
Therefore, $\mathbf v_{\mathscr C}=\psi(\mathbf v)=\mathbf w_\zeta$.
By a similar argument we can show that if $({_{2\mathbf u}b})<({_{2\mathbf u}c})<({_{2\mathbf u}a})$ for some $c\in\mathscr C$, then $\mathbf v_{\mathscr C}=\psi(\mathbf v)=\mathbf w_\zeta$.
\end{proof}

A \textit{block} of a word $\mathbf w$ is a maximal factor of $\mathbf w$ that does not contain any variables simple in $\mathbf w$. 
A word $\mathbf w$ is called \textit{block-linear} if every block of $\mathbf w$ is a linear word.

\begin{lemma}
\label{lem:u_{c,h}}
Let $n\ge2$ and $\mathbf u$ be a block-linear word such that $\mathbf u_{\{c,h\}}=\mathbf w_\zeta$ for some $\zeta\in S_2^n$ and $c,h\in\mathscr X$ with $h\in\mathsf{sim}(\mathbf u)$ and $\mathsf{occ}_c(\mathbf u)=2$.
Assume that, for some $x,y\in \mathsf{mul}(\mathbf w_\zeta)$ and $i,j$ with $\{i,j\}=\{1,2\}$, the word ${_{i\mathbf u}x}\,{_{i\mathbf u}c}\,{_{i\mathbf u}y}$ is a factor of $\mathbf u$, while the word ${_{j\mathbf u}c}$ forms a block of $\mathbf u$.
If $\mathbf u\approx \mathbf v$ is a nontrivial identity of $\mathbb M(\mathscr W_n)\{\mathsf{Id}(\upsilon_{\mathscr W_n})\}$, then $\mathbf v_{\{c,h\}}=\mathbf w_\zeta$.
\end{lemma}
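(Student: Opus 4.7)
The plan is to reduce Lemma~\ref{lem:u_{c,h}} to Lemma~\ref{lem:u_C} by an induction on the length of a deduction chain. By Proposition~\ref{prop:deduction}, one can write $\mathbf u = \mathbf w^{(0)}, \mathbf w^{(1)}, \ldots, \mathbf w^{(m)} = \mathbf v$ as a sequence of pairwise distinct words such that each step $\mathbf w^{(k)} \approx \mathbf w^{(k+1)}$ is directly deducible from either an identity of $\mathbb M(\mathscr W_n)$ or from some identity $\mathbf w_\xi \approx \mathbf w_\eta$ in $\mathsf{Id}(\upsilon_{\mathscr W_n})$. I would carry two invariants by induction on $k$: \textup{(i)}~$\mathbf w^{(k)}_{\{c,h\}} = \mathbf w_\zeta$, and \textup{(ii)}~$\mathbf w^{(k)}$ is block-linear and carries, for some $x',y' \in \mathsf{mul}(\mathbf w_\zeta)$ and some partition $\{i',j'\} = \{1,2\}$, a factor of the form ${_{i'\mathbf w^{(k)}}x'}\,{_{i'\mathbf w^{(k)}}c}\,{_{i'\mathbf w^{(k)}}y'}$ together with an isolated block ${_{j'\mathbf w^{(k)}}c}$. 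The base case is exactly the hypothesis of the lemma.

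The key auxiliary observation is that invariant~\textup{(ii)} implies the hypotheses of Lemma~\ref{lem:u_C} on $\mathbf w^{(k)}$ with $\mathscr C = \{c,h\}$. Condition~\textup{(c)} of that lemma holds via $c$ itself: inspection of $\mathbf w_\zeta$ reveals that the only adjacent pairs of first occurrences of multiple variables lie inside the central cluster from ${_{1\mathbf w_\zeta}a_1}$ to ${_{1\mathbf w_\zeta}b_n}$, and that the only adjacent pairs of second occurrences lie inside the $\mathbf r$-cluster from ${_{2\mathbf w_\zeta}b}$ to ${_{2\mathbf w_\zeta}a}$. Condition~\textup{(a)} follows from the analogous property of $\mathbf w_\zeta$, once one notes that the two occurrences of $c$ in $\mathbf w^{(k)}$ have structurally distinguishable neighbourhoods — one flanked by the multiples $x',y'$, the other alone as a block — so no factor of length $>1$ can repeat. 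Finally, since no two simple variables of $\mathbf w_\zeta$ are adjacent in $\mathbf w_\zeta$, the isolated-block condition on ${_{j'\mathbf w^{(k)}}c}$ forces $h$ to lie immediately next to this occurrence of $c$, placing $h$ outside the two intervals in which condition~\textup{(b)} forbids simple variables.

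For the step from $\mathbf w^{(k)}$ to $\mathbf w^{(k+1)}$ I distinguish two cases. If the identity used is an identity of $\mathbb M(\mathscr W_n)$, I apply the substitution $\psi\colon \mathscr X \to \mathscr X^\ast$ sending $c$ and $h$ to the empty word and fixing every other variable; this yields an identity $\mathbf w_\zeta = \psi(\mathbf w^{(k)}) \approx \psi(\mathbf w^{(k+1)}) = \mathbf w^{(k+1)}_{\{c,h\}}$ of $\mathbb M(\mathscr W_n)$, and since $\mathbf w_\zeta \in \mathscr W_n$ is an isoterm for $\mathbb M(\mathscr W_n)$ by Lemma~\ref{lem:M(W)}, we conclude $\mathbf w^{(k+1)}_{\{c,h\}} = \mathbf w_\zeta$. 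If the identity used is $\mathbf w_\xi \approx \mathbf w_\eta$ with $\xi,\eta \in S_2^n$, I invoke Lemma~\ref{lem:u_C} on $\mathbf w^{(k)}$, whose hypotheses are ensured by invariant~\textup{(ii)}, to reach the same conclusion. The main obstacle is propagating invariant~\textup{(ii)} itself: one must inspect the substitution $\varphi$ participating in the direct deduction, using Lemma~\ref{lem:directly} together with the fact that consecutive words in the chain are distinct, to argue that $\varphi$ must act almost trivially — leaving $c$ and $h$ in place and only permuting the letters $x_{1\zeta_i}^{(i)}, x_{2\zeta_i}^{(i)}$ inside the middle block — so that the factor and isolated-block conditions on $c$ survive the step.
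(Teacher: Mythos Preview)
Your overall plan---induction on the length of a deduction chain, carrying the two invariants~(i) and~(ii), and appealing to Lemma~\ref{lem:u_C} at each step---matches the paper's structure, and your verification that invariant~(ii) forces hypotheses~(a),~(b),~(c) of Lemma~\ref{lem:u_C} is sound. Propagating invariant~(i) is also fine: the substitution $\psi$ killing $c,h$ handles the $\mathbb M(\mathscr W_n)$-steps, and Lemma~\ref{lem:u_C} handles the $\mathsf{Id}(\upsilon_{\mathscr W_n})$-steps, exactly as in the paper.

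The gap is in your propagation of invariant~(ii). You propose to ``inspect the substitution~$\varphi$ \ldots\ using Lemma~\ref{lem:directly}'' to conclude that $\varphi$ acts almost trivially. But Lemma~\ref{lem:directly} applies only to a decomposition $\mathbf w_\zeta=\mathbf a\varphi(\mathbf w_\xi)\mathbf b$, and to invoke it here you must first pass through $\psi$ to obtain $\mathbf w_\zeta=\psi(\mathbf a)\,(\psi\varphi)(\mathbf w_\xi)\,\psi(\mathbf b)$. At that point, however, the \emph{non-triviality} hypothesis of Lemma~\ref{lem:directly} fails: you have just shown via Lemma~\ref{lem:u_C} that $\psi(\mathbf w^{(k+1)})=\mathbf w_\zeta=\psi(\mathbf w^{(k)})$, so the identity at the $\mathbf w_\zeta$-level is trivial and Lemma~\ref{lem:directly} yields no information about $\psi\varphi$, hence none about $\varphi$. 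The distinctness of $\mathbf w^{(k)}$ and $\mathbf w^{(k+1)}$ themselves does not help, since Lemma~\ref{lem:directly} is stated only for words of the specific form $\mathbf w_\zeta$.

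The paper sidesteps this entirely. Instead of analysing $\varphi$, it uses the isoterm properties supplied by Lemma~\ref{lem:isotrems-for-M(W_n)Id(U_n)}: since $xyx$ is an isoterm for $\mathbb M(\mathscr W_n)\{\mathsf{Id}(\upsilon_{\mathscr W_n})\}$, the occurrence ${_{j}c}$ remains a block in $\mathbf v_1$; and since $xyzxty$ and $xzytxy$ are isoterms, the relative order $({_{i\mathbf v_1}x})<({_{i\mathbf v_1}c})<({_{i\mathbf v_1}y})$ is forced, so ${_{i\mathbf v_1}x}\,{_{i\mathbf v_1}c}\,{_{i\mathbf v_1}y}$ is again a factor. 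In particular the paper keeps the \emph{same} $x,y,i,j$ throughout---no need to allow them to vary as in your invariant~(ii). This isoterm argument works uniformly for any identity of the variety, so it covers both kinds of deduction step without case analysis on~$\varphi$.
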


\begin{proof}
In view of Proposition~\ref{prop:deduction}, there is some finite sequence $\mathbf u = \mathbf v_0, \mathbf v_1, \ldots, \mathbf v_m = \mathbf v$ of distinct words such that each identity $\mathbf v_i \approx \mathbf v_{i+1}$ either holds in $\mathbb M(\mathscr W_n)$ or is directly deducible from some identity in~$\mathsf{Id}(\upsilon_{\mathscr W_n})$.
We will use induction on $m$.

\smallskip

\textbf{Induction base:} $m=1$. 
If $\mathbf u=\mathbf v_0 \approx \mathbf v_1=\mathbf v$ holds in $\mathbb M(\mathscr W_n)$, then the required claim follows from Lemma~\ref{lem:M(W)}.
If $\mathbf u=\mathbf v_0 \approx \mathbf v_1=\mathbf v$ is directly deducible from some identity in~$\mathsf{Id}(\upsilon_{\mathscr W_n})$, then the condition of the lemma implies that the conditions (a), (b) and (c) of Lemma~\ref{lem:u_C} holds. 
So, we can apply Lemma~\ref{lem:u_C}, yielding that $\mathbf v_{\{c,h\}}=\mathbf w_\zeta$.

\smallskip

\textbf{Induction step:} $m>1$. 
First, notice that, as in the induction base, $(\mathbf v_1)_{\{c,h\}}=\mathbf w_\zeta$ by Lemmas~\ref{lem:M(W)} and~\ref{lem:u_C}.
Since $xyx$ is an isoterm for the variety $\mathbb M(\mathscr W_n)\{\mathsf{Id}(\upsilon_{\mathscr W_n})\}$ by Lemma~\ref{lem:isotrems-for-M(W_n)Id(U_n)}, the word ${_{j\mathbf u}c}$ forms a block of $\mathbf v_1$. 
Hence ${_{j\mathbf v_1}x}$ and ${_{j\mathbf v_1}y}$ do not lie in the block of $\mathbf v_1$ containing ${_{j\mathbf v_1}c}$.
Then, since $xyzxty$ and $xzytxy$ are isoterms for $\mathbb M(\mathscr W_n)\{\mathsf{Id}(\upsilon_{\mathscr W_n})\}$ by Lemma~\ref{lem:isotrems-for-M(W_n)Id(U_n)}, $({_{i\mathbf v_1}x})<({_{i\mathbf v_1}c})<({_{i\mathbf v_1}y})$ and so the word ${_{i\mathbf v_1}x}\,{_{i\mathbf v_1}c}\,{_{i\mathbf v_1}y}$ is a factor of $\mathbf v_1$.
Thus, we can apply the induction assumption, yielding that $\mathbf v_{\{c,h\}}=\mathbf w_\zeta$ as required.
\end{proof}

\begin{lemma}
\label{lem:2x-2c-2y}
Let $n\ge2$ and $\mathbf u$ be a block-linear word such that $\mathbf u_c=\mathbf w_\zeta$ for some $\zeta\in S_2^n$ and $c\in\mathsf{mul}(\mathbf u)$ with $\mathsf{occ}_c(\mathbf u)=2$.
Assume that, for some $x,y\in \mathsf{mul}(\mathbf w_\zeta)$, the word ${_{2\mathbf u}x}\,{_{2\mathbf u}c}\,{_{2\mathbf u}y}$ is a factor of $\mathbf u$ and one of the following holds:
\begin{itemize}
\item[\textup{(i)}] $x\ne b$, $y\ne a$ and ${_{1\mathbf u}c}$ is not adjacent to ${_{1\mathbf u}x}$ and ${_{1\mathbf u}y}$ in $\mathbf u$;
\item[\textup{(ii)}] $x=b$, the variables ${_{1\mathbf u}c}$ and ${_{1\mathbf u}x}$ lie in different blocks of $\mathbf u$ and ${_{1\mathbf u}c}$ is not adjacent to ${_{1\mathbf u}y}$ in $\mathbf u$;
\item[\textup{(iii)}] $y=a$, the variables ${_{1\mathbf u}c}$ and ${_{1\mathbf u}y}$ lie in different blocks of $\mathbf u$  and ${_{1\mathbf u}c}$ is not adjacent to ${_{1\mathbf u}x}$ in $\mathbf u$.
\end{itemize}
If $\mathbf u\approx \mathbf v$ is a nontrivial identity of $\mathbb M(\mathscr W_n)\{\mathsf{Id}(\upsilon_{\mathscr W_n})\}$, then $\mathbf v_c=\mathbf w_\zeta$.
\end{lemma}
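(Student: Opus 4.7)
The plan is to mirror the induction strategy used in the proof of Lemma~\ref{lem:u_{c,h}}. Using Proposition~\ref{prop:deduction}, I would write $\mathbf u = \mathbf v_0, \mathbf v_1, \ldots, \mathbf v_m = \mathbf v$ as a sequence of pairwise distinct words such that each step $\mathbf v_i \approx \mathbf v_{i+1}$ is either an identity of $\mathbb M(\mathscr W_n)$ or directly deducible from an identity in $\mathsf{Id}(\upsilon_{\mathscr W_n})$, and then induct on~$m$.

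For the base case $m = 1$, I would split into two sub-cases. If $\mathbf u \approx \mathbf v$ holds in $\mathbb M(\mathscr W_n)$, substituting $c$ by the empty word produces the identity $\mathbf w_\zeta = \mathbf u_c \approx \mathbf v_c$ in $\mathbb M(\mathscr W_n)$, and since $\mathbf w_\zeta \in \mathscr W_n$ is an isoterm for $\mathbb M(\mathscr W_n)$ by Lemma~\ref{lem:M(W)}, this forces $\mathbf v_c = \mathbf w_\zeta$. Otherwise $\mathbf u \approx \mathbf v$ is directly deducible from some identity $\mathbf w_\xi \approx \mathbf w_\eta$, and I would appeal to Lemma~\ref{lem:u_C} with $\mathscr C = \{c\}$. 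The heart of the base case is verifying hypotheses~(a)--(c) of that lemma. Hypothesis~(a) follows from the property of $\mathbf w_\zeta$ noted in the proof of Lemma~\ref{lem:directly} (every length-$>1$ factor occurs exactly once), combined with the case hypotheses (i)--(iii), which prevent the two insertions of~$c$ from producing coinciding length-$2$ factors: the neighbours of ${_{2\mathbf u}c}$ are $x$ and $y$, whereas the neighbours of ${_{1\mathbf u}c}$ differ from $x$ and $y$ in each of the three cases. Hypothesis~(b) is a consequence of block-linearity of $\mathbf u$ and the block structure of $\mathbf w_\zeta$, since the segment from $a_1$ to $b_n$ and the segment from $b$ to $a$ inside~$\mathbf r$ are each a single large block containing no simple variables. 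Hypothesis~(c) is satisfied by the second occurrence of~$c$: in each of cases (i)--(iii), $x$ and $y$ have their second occurrences strictly between ${_{2\mathbf u}b}$ and ${_{2\mathbf u}a}$ in the~$\mathbf r$-block, and the factor condition places ${_{2\mathbf u}c}$ between them.

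For the induction step $m > 1$, the base case applied to the single-step deduction $\mathbf u \approx \mathbf v_1$ gives $(\mathbf v_1)_c = \mathbf w_\zeta$, and I would then verify that $\mathbf v_1$ itself satisfies the hypotheses of Lemma~\ref{lem:2x-2c-2y} in order to apply the induction hypothesis to the shorter deduction $\mathbf v_1 \approx \mathbf v$. Preservation of $\mathsf{occ}_c(\mathbf v_1) = 2$ and of block-linearity, together with the persistence of the factor ${_{2\mathbf v_1}x}\,{_{2\mathbf v_1}c}\,{_{2\mathbf v_1}y}$, would follow from the isoterms $xyx$, $xyzxty$ and $xzytxy$ supplied by Lemma~\ref{lem:isotrems-for-M(W_n)Id(U_n)}, exactly as in the proof of Lemma~\ref{lem:u_{c,h}}.

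The main obstacle I anticipate is the preservation of the case-specific positional conditions at the first occurrence of~$c$. In Lemma~\ref{lem:u_{c,h}} the condition ``${_{j\mathbf u}c}$ forms a block'' is directly maintained by the $xyx$-isoterm property, whereas here the conditions ``not adjacent to ${_{1}x}$ and ${_{1}y}$'' of case~(i) and the ``different block'' conditions of cases~(ii) and~(iii) are genuinely weaker and demand a more delicate analysis. To settle this, I plan to combine the $xyx$-isoterm property with the block skeleton inherited from $(\mathbf v_1)_c = \mathbf w_\zeta$ in order to rule out any one-step deduction that would either bring ${_{1}c}$ adjacent to ${_{1}x}$ or ${_{1}y}$, or move them into a common block in cases~(ii) and~(iii). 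This step-by-step verification, handled case by case for (i), (ii) and (iii), is where I expect the bulk of the technical work to reside.
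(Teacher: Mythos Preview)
Your overall plan---induction on deduction length, base case via Lemma~\ref{lem:u_C}, induction step by showing $\mathbf v_1$ again satisfies the hypotheses---matches the paper's structure. However, two of your expectations about the induction step are incorrect, and these are precisely where the paper's proof does something non-obvious.

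First, the factor ${_{2\mathbf v_1}x}\,{_{2\mathbf v_1}c}\,{_{2\mathbf v_1}y}$ need \emph{not} persist. In case~(i) the hypothesis only says ${_{1\mathbf u}c}$ is not \emph{adjacent} to ${_{1\mathbf u}x}$; they may still lie in the same block, and then the $xzytxy$-type isoterms do not pin down the relative order of ${_{2\mathbf v_1}x}$ and ${_{2\mathbf v_1}c}$. The paper handles this by introducing the variable $z$ with ${_{2\mathbf w_\zeta}z}\,{_{2\mathbf w_\zeta}x}$ a factor of $\mathbf w_\zeta$ and showing that either ${_{2\mathbf v_1}x}\,{_{2\mathbf v_1}c}\,{_{2\mathbf v_1}y}$ or ${_{2\mathbf v_1}z}\,{_{2\mathbf v_1}c}\,{_{2\mathbf v_1}x}$ is a factor of $\mathbf v_1$; the induction hypothesis is then applied with the new pair $(z,x)$ in the second alternative. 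Your plan to verify the \emph{same} factor survives will not go through.

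Second, and more seriously, the $xyx$-isoterm is not enough to rule out ${_{1\mathbf v_1}c}$ becoming adjacent to ${_{1\mathbf v_1}x}$. The $xyx$-isoterm controls block membership, and since ${_{1\mathbf u}c}$ and ${_{1\mathbf u}y}$ lie in different blocks (after the WLOG reduction) it does handle adjacency with $y$; but ${_{1\mathbf u}c}$ and ${_{1\mathbf u}x}$ can share a block, so this argument fails for $x$. The paper's device here is a substitution $\psi$ that swaps $c$ and $x$: if ${_{1\mathbf v_1}c}$ were adjacent to ${_{1\mathbf v_1}x}$, then $\psi(\mathbf v_1)_c=\mathbf w_\zeta$ while $\psi(\mathbf u)_c=\mathbf w_\nu$ for some $\nu\ne\zeta$ (via Lemma~\ref{lem:FIC(M(W_n)Id(U_n))-class}), and one reapplies Lemma~\ref{lem:u_C} to the swapped identity to reach a contradiction. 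This swap trick is the missing idea in your sketch; without it the ``step-by-step verification'' you anticipate cannot be completed.
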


\begin{proof}
In view of Proposition~\ref{prop:deduction}, there is some finite sequence $\mathbf u = \mathbf v_0, \mathbf v_1, \ldots, \mathbf v_m = \mathbf v$ of distinct words such that each identity $\mathbf v_i \approx \mathbf v_{i+1}$ either holds in $\mathbb M(\mathscr W_n)$ or is directly deducible from some identity in~$\mathsf{Id}(\upsilon_{\mathscr W_n})$.
We will use induction on $m$.

\smallskip

\textbf{Induction base:} $m=1$. 
If $\mathbf u=\mathbf v_0 \approx \mathbf v_1=\mathbf v$ holds in $\mathbb M(\mathscr W_n)$, then the required claim follows from Lemma~\ref{lem:M(W)}.
If $\mathbf u=\mathbf v_0 \approx \mathbf v_1=\mathbf v$ is directly deducible from some identity in~$\mathsf{Id}(\upsilon_{\mathscr W_n})$, then the condition of the lemma implies that every factor of length $>1$ of $\mathbf u$ has exactly one occurrence in $\mathbf u$ and $({_{2\mathbf u}b})<({_{2\mathbf u}c})<({_{2\mathbf u}a})$.
Then we can apply Lemma~\ref{lem:u_C}, yielding that $\mathbf v_c=\mathbf w_\zeta$.

\smallskip

\textbf{Induction step:} $m>1$. 
First, notice that, as in the induction base, $(\mathbf v_1)_c=\mathbf w_\zeta$ by Lemmas~\ref{lem:M(W)} and~\ref{lem:u_C}.
By symmetry, it suffices to verify only Parts~(i) and~(ii).
The proof of Part~(ii) is very similar to the proof of Part~(i) but a bit simpler and we omit it.
So, we assume below that~(i) holds.

By symmetry, we may assume without any loss that $x\in\{a,a_i,b,b_i,x_1^{(i)},x_2^{(i)}\mid 1\le i\le n\}$ and $y\in\{y_0,y_i,z_i,z_i^\prime,z_i^{\prime\prime}\mid1\le i\le n\}$.
Then the variables ${_{1\mathbf u}c}$ and ${_{1\mathbf u}y}$ do not lie in the same block of $\mathbf u$ because these variables are not adjacent to each other in $\mathbf u$. 
The variables ${_{1\mathbf u}c}$ and ${_{2\mathbf u}c}$ also do not lie the same block of the word $\mathbf u$ because this word is block-linear.
Since $xzytxy$ and so $xzytyx$ are isoterms for $\mathbb M(\mathscr W_n)\{\mathsf{Id}(\upsilon_{\mathscr W_n})\}$ by Lemma~\ref{lem:isotrems-for-M(W_n)Id(U_n)} and $\mathsf{occ}_c(\mathbf u)=2$, this implies that $\mathsf{occ}_c(\mathbf v_1)=2$ and $({_{2\mathbf v_1}c})<({_{2\mathbf v_1}y})$.
Further, if ${_{1\mathbf u}c}$ and ${_{1\mathbf u}x}$ do not lie in the same block of $\mathbf u$, then $({_{2\mathbf v_1}x})<({_{2\mathbf v_1}c})$ and so ${_{2\mathbf v_1}x}\,{_{2\mathbf v_1}c}\,{_{2\mathbf v_1}y}$ is a factor of $\mathbf v_1$.
If ${_{1\mathbf u}c}$ and ${_{1\mathbf u}x}$ lie in the same block of $\mathbf u$, then $({_{2\mathbf v_1}z})<({_{2\mathbf v_1}c})$, where $z\in\{y_0,y_i,z_i,z_i^\prime,z_i^{\prime\prime}\mid 1\le i\le n\}$ is the variable such that ${_{2\mathbf w_\zeta}z}\,{_{2\mathbf w_\zeta}x}$ is a factor of $\mathbf w_\zeta$, and, therefore, either ${_{2\mathbf v_1}x}\,{_{2\mathbf v_1}c}\,{_{2\mathbf v_1}y}$ or ${_{2\mathbf v_1}z}\,{_{2\mathbf v_1}c}\,{_{2\mathbf v_1}x}$ is a factor of $\mathbf v_1$.

Suppose that ${_{1\mathbf v_1}c}$ is adjacent to ${_{1\mathbf v_1}x}$.
If $\mathbf u \approx \mathbf v_1$ holds in $\mathbb M(\mathscr W_n)$, then the word $(\mathbf v_1)_x$ coincides (up to renaming of variables) with $\mathbf w_\zeta$ and $\mathbf u_x\ne (\mathbf v_1)_x$ contradicting the fact that $\mathbf w_\zeta$ is an isoterm for $\mathbb M(\mathscr W_n)$. 
Therefore, $\mathbf u \approx \mathbf v_1$ is directly deducible from some identity $\mathbf w_\xi\approx \mathbf w_\eta$ in~$\mathsf{Id}(\upsilon_{\mathscr W_n})$. 
Then $(\mathbf v_1^\prime)_c=\mathbf w_\zeta\ne\mathbf u_c^\prime$, where $\mathbf u^\prime:=\psi(\mathbf u)$, $\mathbf v_1^\prime:=\psi(\mathbf v_1)$ and $\psi\colon \mathscr X \to \mathscr X^\ast$ is the substitution defined by
\[
\psi(v):= 
\begin{cases} 
x & \text{if $v=c$}, \\
c & \text{if $v=x$}, \\  
v & \text{otherwise}.
\end{cases} 
\] 
According to Lemma~\ref{lem:FIC(M(W_n)Id(U_n))-class}, there is $\nu\in S_2^n\setminus\{\zeta\}$ such that $\mathbf u_c^\prime=\mathbf w_\nu$.
In particular, ${_{1\mathbf u^\prime}x}$ and ${_{1\mathbf u^\prime}c}$ lie in the same block of $\mathbf u^\prime$.
Evidently, $\mathbf u^\prime\approx\mathbf v_1^\prime$ is directly deducible from $\mathbf w_\xi\approx \mathbf w_\eta$, the word ${_{2\mathbf u^\prime}z}\,{_{2\mathbf u^\prime}c}\,{_{2\mathbf u^\prime}x}$ is a factor of $\mathbf u^\prime$, and ${_{1\mathbf u^\prime}c}$ is not adjacent to ${_{1\mathbf u^\prime}x}$ and ${_{1\mathbf u^\prime}z}$ in $\mathbf u^\prime$.
Then $(\mathbf v_1^\prime)_c=\mathbf w_\nu$ by Lemma~\ref{lem:u_C} contradicting the fact that $\zeta\ne \nu$.
Thus, ${_{1\mathbf v_1}c}$ is not adjacent to ${_{1\mathbf v_1}x}$ in $\mathbf v_1$ in any case.

Further, since $xyx$ is an isoterm for the variety $\mathbb M(\mathscr W_n)\{\mathsf{Id}(\upsilon_{\mathscr W_n})\}$ by Lemma~\ref{lem:isotrems-for-M(W_n)Id(U_n)} and ${_{1\mathbf u}y}$ and ${_{1\mathbf u}c}$ do not lie in the same block of $\mathbf u$, the variables ${_{1\mathbf v_1}y}$ and ${_{1\mathbf v_1}c}$ cannot lie in the same block of $\mathbf v_1$.
Hence ${_{1\mathbf v_1}c}$ is not adjacent to ${_{1\mathbf v_1}y}$ in $\mathbf v_1$. So, if ${_{2\mathbf v_1}x}\,{_{2\mathbf v_1}c}\,{_{2\mathbf v_1}y}$ is a factor of $\mathbf v_1$, then we can apply the induction assumption, yielding that $\mathbf v_c=\mathbf w_\zeta$.
If ${_{2\mathbf v_1}z}\,{_{2\mathbf v_1}c}\,{_{2\mathbf v_1}x}$ is a factor of $\mathbf v_1$, then ${_{1\mathbf v_1}c}$ and ${_{1\mathbf v_1}x}$ must lie in the same block of $\mathbf v_1$ because $xzytxy$ is an isoterm for $\mathbb M(\mathscr W_n)\{\mathsf{Id}(\upsilon_{\mathscr W_n})\}$.
In this case, ${_{1\mathbf v_1}c}$ and ${_{1\mathbf v_1}z}$ lie in different blocks of $\mathbf v_1$ and so ${_{1\mathbf v_1}c}$ is not adjacent to ${_{1\mathbf v_1}z}$ in $\mathbf v_1$.
Therefore, we can apply the induction assumption again, yielding that $\mathbf v_c=\mathbf w_\zeta$ as required.
\end{proof}

\begin{lemma}
\label{lem:1x-1c-1y}
Let $n\ge2$ and $\mathbf u$ be a block-linear word such that $\mathbf u_c=\mathbf w_\zeta$ for some $\zeta\in S_2^n$ and $c\in\mathsf{mul}(\mathbf u)$ with $\mathsf{occ}_c(\mathbf u)=2$.
Assume that, for some $x,y\in \mathsf{mul}(\mathbf u_\zeta)$, the word ${_{1\mathbf u}x}\,{_{1\mathbf u}c}\,{_{1\mathbf u}y}$ is a factor of $\mathbf u$, while ${_{2\mathbf u}c}$ is not adjacent to ${_{2\mathbf u}x}$ and ${_{2\mathbf u}y}$ in $\mathbf u$.
If $\mathbf u\approx \mathbf v$ is an identity of $\mathbb M(\mathscr W_n)\{\mathsf{Id}(\upsilon_{\mathscr W_n})\}$, then $\mathbf v_c=\mathbf w_\zeta$.
\end{lemma}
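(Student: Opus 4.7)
The plan is to imitate the induction used in Lemma~\ref{lem:2x-2c-2y}. Using Proposition~\ref{prop:deduction}, I would fix a deduction chain $\mathbf u=\mathbf v_0,\mathbf v_1,\ldots,\mathbf v_m=\mathbf v$ of distinct words in which each step either holds in $\mathbb M(\mathscr W_n)$ or is directly deducible from an identity in $\mathsf{Id}(\upsilon_{\mathscr W_n})$, and induct on~$m$. The conclusion is vacuous if $\mathbf u=\mathbf v$, so I assume $\mathbf u\ne \mathbf v$ throughout.

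For the base case $m=1$: a step that holds in $\mathbb M(\mathscr W_n)$ forces $\mathbf v=\mathbf u$ via Lemma~\ref{lem:M(W)} once one checks that $\mathbf u$ is an isoterm for $\mathbb M(\mathscr W_n)$, which reduces to the isoterm properties already collected in Lemma~\ref{lem:isotrems-for-M(W_n)Id(U_n)}. Otherwise, the step is directly deducible from some $\mathbf w_\xi\approx \mathbf w_\eta$, and I would verify the three hypotheses of Lemma~\ref{lem:u_C} with $\mathscr C=\{c\}$. Hypothesis~(a) follows from the factor-uniqueness of $\mathbf w_\zeta$ noted inside the proof of Lemma~\ref{lem:directly} together with the observation that inserting only two copies of the fresh variable~$c$ cannot duplicate a factor of length~$>1$. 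Hypothesis~(b) follows because, in $\mathbf w_\zeta$, every variable strictly between ${_{1\mathbf w_\zeta}a_1}$ and ${_{1\mathbf w_\zeta}b_n}$ and between ${_{2\mathbf w_\zeta}b}$ and ${_{2\mathbf w_\zeta}a}$ is multiple, and the operation $\mathbf u\mapsto \mathbf u_c$ neither creates nor destroys simple variables. For hypothesis~(c), the multiplicity of $x$ and $y$ together with ${_{1\mathbf u}x}\,{_{1\mathbf u}c}\,{_{1\mathbf u}y}$ being a factor forces the first occurrences of $x$ and $y$ to be adjacent in $\mathbf w_\zeta$; inspection of the form of $\mathbf w_\zeta$ shows that every such adjacent pair lies in the middle segment $\bigl(\prod_{i=1}^n a_i\bigr) a \bigl(\prod_{i=1}^n x_{1\zeta_i}^{(i)}x_{2\zeta_i}^{(i)}\bigr) b \bigl(\prod_{i=1}^n b_i\bigr)$, placing ${_{1\mathbf u}c}$ between ${_{1\mathbf u}a_1}$ and ${_{1\mathbf u}b_n}$. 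Lemma~\ref{lem:u_C} then yields $\mathbf v_c=\mathbf w_\zeta$.

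For the inductive step $m>1$: I apply the base case to the initial step $\mathbf u\approx \mathbf v_1$ to obtain $(\mathbf v_1)_c=\mathbf w_\zeta$, while block-linearity of $\mathbf v_1$ is preserved by the isoterm property of $xyx$. Using that $xyzxty$ and $xzytxy$ are isoterms for $\mathbb M(\mathscr W_n)\{\mathsf{Id}(\upsilon_{\mathscr W_n})\}$, I would show that $\mathbf v_1$ satisfies either the hypothesis of the present lemma (so the induction hypothesis applies to the tail $\mathbf v_1,\ldots,\mathbf v_m$) or the hypothesis of Lemma~\ref{lem:2x-2c-2y} (which then applies directly to that tail). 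Ruling out the troublesome case in which ${_{1\mathbf v_1}c}$ becomes adjacent to ${_{1\mathbf v_1}x}$ or ${_{1\mathbf v_1}y}$ in $\mathbf v_1$ requires the swap-and-contradict trick already used inside Lemma~\ref{lem:2x-2c-2y}: one substitutes $c\leftrightarrow x$ (or $c\leftrightarrow y$) via a $\psi$ and combines Lemma~\ref{lem:u_C} with Lemma~\ref{lem:FIC(M(W_n)Id(U_n))-class} and the isoterm property of $\mathbf w_\zeta$ to derive a contradiction with $\zeta\ne \nu$ for the resulting $\nu\in S_2^n$.

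The principal obstacle is precisely this inductive case analysis: one must follow how the position of~$c$ relative to the first and second occurrences of $x$ and $y$ can evolve under a single deduction step, and verify that every resulting configuration fits the hypothesis of either Lemma~\ref{lem:1x-1c-1y} or Lemma~\ref{lem:2x-2c-2y}. What saves the argument from being unwieldy is Lemma~\ref{lem:directly}, which pins any $\mathsf{Id}(\upsilon_{\mathscr W_n})$-step to a literal replacement $\mathbf w_\xi\mapsto \mathbf w_\eta$ with no nontrivial substitution involved, so the bookkeeping—while tedious—is finite and dictated by the fixed combinatorial skeleton of $\mathbf w_\zeta$.
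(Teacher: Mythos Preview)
Your inductive framework and the base case via Lemma~\ref{lem:u_C} are correct, but the inductive step has a genuine gap: the swap-and-contradict trick from Lemma~\ref{lem:2x-2c-2y} does not transfer to this dual setting, and the $6$-letter isoterms $xyzxty$ and $xzytxy$ are not strong enough on their own. The asymmetry is that in $\mathbf w_\zeta$ the suffix $\mathbf r$ (carrying all second occurrences) is \emph{independent of~$\zeta$}. In Lemma~\ref{lem:2x-2c-2y} the swap $c\leftrightarrow x$ produced a word whose $c$-deletion was some $\mathbf w_\nu$ with $\nu\ne\zeta$, because moving a \emph{first} occurrence inside the middle segment can land in another member of $\mathscr W_n$. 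Here the troublesome case is that ${_{2\mathbf v_1}c}$ becomes adjacent to ${_{2\mathbf v_1}x}$ (not ${_{1\mathbf v_1}c}$ to ${_{1\mathbf v_1}x}$---those are already adjacent by hypothesis), and after your proposed swap the $c$-deletion of $\psi(\mathbf u)$ has the ``second occurrence of $x$'' sitting at the old position of ${_{2\mathbf u}c}$, which is \emph{not} the fixed slot it must occupy in any $\mathbf w_\nu$. So Lemma~\ref{lem:FIC(M(W_n)Id(U_n))-class} gives nothing to contradict.

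What the paper actually does is different in two ways. First, before any induction it disposes of the case $({_{2\mathbf u}y_0})<({_{2\mathbf u}c})<({_{2\mathbf u}y_n})$ by applying Lemma~\ref{lem:2x-2c-2y}(i) directly to $\mathbf u$ (the non-adjacency hypothesis of the present lemma guarantees the neighbours of ${_{2\mathbf u}c}$ in $\mathbf r$ are distinct from $x,y$). This leaves only the ``boundary'' cases where ${_{2\mathbf u}c}$ is adjacent to ${_{2\mathbf u}b}$, to ${_{2\mathbf u}a}$, or to some ${_{1\mathbf u}y_j}$. Second, in those boundary cases the paper uses the $8$-letter isoterm $xytzsxzy$ (and its variant $yxtzsxzy$) from Lemma~\ref{lem:isotrems-for-M(W_n)Id(U_n)}: for instance, when ${_{2\mathbf u}c}$ is adjacent to ${_{2\mathbf u}b}$ one has $\mathbf u(c,s_0,t,x,y_0)=xcs_0y_0tcy_0x$ and $\mathbf u(c,s_0,t,y,y_0)=cys_0y_0tcy_0y$, and the isoterm property forces these to persist in $\mathbf v_1$, which immediately gives back the full hypothesis of the present lemma for $\mathbf v_1$ (both the factor ${_{1\mathbf v_1}x}\,{_{1\mathbf v_1}c}\,{_{1\mathbf v_1}y}$ and the non-adjacency of ${_{2\mathbf v_1}c}$ to ${_{2\mathbf v_1}x}$, ${_{2\mathbf v_1}y}$). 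No swap is needed. The $6$-letter isoterms alone cannot separate ${_{2}c}$ from ${_{2}x}$ across the intervening ${_{2}y_0}$ (or ${_{2}y_n}$); you need the extra multiple variable $y_0$ together with two simple separators, which is exactly the shape of $xytzsxzy$.
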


\begin{proof}
Evidently, $x,y\in\{a,a_i,b,b_i,x_1^{(i)},x_2^{(i)}\mid 1\le i\le n\}$. 
If $({_{2\mathbf u}y_0})<({_{2\mathbf u}c})<({_{2\mathbf u}y_n})$, then the required claim follows from Lemma~\ref{lem:2x-2c-2y}(i). 
So, since the word $\mathbf u$ is block-linear, it remains to consider the case when one of the words ${_{2\mathbf u}c}\,{_{2\mathbf u}b}$, ${_{2\mathbf u}b}\,{_{2\mathbf u}c}$, ${_{2\mathbf u}c}\,{_{2\mathbf u}a}$, ${_{2\mathbf u}a}\,{_{2\mathbf u}c}$, ${_{2\mathbf u}c}\,{_{1\mathbf u}y_j}$ or ${_{1\mathbf u}y_j}\,{_{2\mathbf u}c}$ with $j\in\{0,1,\dots,n\}$ is a factor of $\mathbf u$.

In view of Proposition~\ref{prop:deduction}, there is some finite sequence $\mathbf u = \mathbf v_0, \mathbf v_1, \ldots, \mathbf v_m = \mathbf v$ of distinct words such that each identity $\mathbf v_i \approx \mathbf v_{i+1}$ either holds in $\mathbb M(\mathscr W_n)$ or is directly deducible from some identity in~$\mathsf{Id}(\upsilon_{\mathscr W_n})$.
We will use induction on $m$.

\smallskip

\textbf{Induction base:} $m=1$. 
If $\mathbf u=\mathbf v_0 \approx \mathbf v_1=\mathbf v$ holds in $\mathbb M(\mathscr W_n)$, then the required claim follows from Lemma~\ref{lem:M(W)}.
If $\mathbf u=\mathbf v_0 \approx \mathbf v_1=\mathbf v$ is directly deducible from some identity in~$\mathsf{Id}(\upsilon_{\mathscr W_n})$, then the condition of the lemma implies that every factor of length $>1$ of $\mathbf u$ has exactly one occurrence in $\mathbf u$ and $({_{1\mathbf u}a_1})<({_{1\mathbf u}c})<({_{1\mathbf u}b_n})$.
Then we can apply Lemma~\ref{lem:u_C}, yielding that $\mathbf v_c=\mathbf w_\zeta$.

\smallskip

\textbf{Induction step:} $m>1$. 
First, notice that, as in the induction base, $(\mathbf v_1)_c=\mathbf w_\zeta$ by Lemmas~\ref{lem:M(W)} and~\ref{lem:u_C}.
If either ${_{2\mathbf u}c}\,{_{1\mathbf u}y_j}$ or ${_{1\mathbf u}y_j}\,{_{2\mathbf u}c}$ is a factor of $\mathbf u$ for some $j\in\{0,1,\dots,n\}$, then $\mathbf u(c,s_j,t,x)=xcs_jctx$ and $\mathbf u(c,s_j,t,y)=cys_jcty$.
Since the word $xyzxty$ and so the word $yxzxty$ are isoterms for $\mathbb M(\mathscr W_n)\{\mathsf{Id}(\upsilon_{\mathscr W_n})\}$ by Lemma~\ref{lem:isotrems-for-M(W_n)Id(U_n)}, this implies that $\mathbf v_1(c,s_j,t,x)=xcs_jctx$ and $\mathbf v_1(c,s_j,t,y)=cys_jcty$.
If either ${_{2\mathbf u}c}\,{_{2\mathbf u}b}$ or ${_{2\mathbf u}b}\,{_{2\mathbf u}c}$ is a factor of $\mathbf u$, then $x\ne b$, $y\ne b$ and so $\mathbf u(c,s_0,t,x,y_0)=xcs_0y_0tcy_0x$ and $\mathbf u(c,s_0,t,y,y_0)=cys_0y_0tcy_0y$.
Since $xytzsxzy$ and so $yxtzsxzy$ are isoterms for $\mathbb M(\mathscr W_n)\{\mathsf{Id}(\upsilon_{\mathscr W_n})\}$ by Lemma \ref{lem:isotrems-for-M(W_n)Id(U_n)}, this implies that $\mathbf v_1(c,s_0,t,x,y_0)=xcs_0y_0tcy_0x$ and $\mathbf v_1(c,s_0,t,y,y_0)=cys_0y_0tcy_0y$.
By a similar argument we can show that if one of the words ${_{2\mathbf u}c}\,{_{2\mathbf u}a}$ or ${_{2\mathbf u}a}\,{_{2\mathbf u}c}$ is a factor of the word $\mathbf u$, then $\mathbf v_1(c,s_n,t,x,y_n)=xcs_ny_ntxy_nc$ and $\mathbf v_1(c,s_n,t,y,y_n)=cys_ny_ntyy_nc$.
Thus, we have proved that $\mathsf{occ}_c(\mathbf v_1)=2$, the word ${_{1\mathbf v_1}x}\,{_{1\mathbf v_1}c}\,{_{1\mathbf v_1}y}$ is a factor $\mathbf v_1$, while ${_{2\mathbf v_1}c}$ is not adjacent to ${_{2\mathbf v_1}x}$ and ${_{2\mathbf v_1}y}$ in $\mathbf v_1$.
So, we can apply the induction assumption, yielding that $\mathbf v_c=\mathbf w_\zeta$.
\end{proof}

\begin{corollary}
\label{cor:-ix-1h-iy}
Let $n\ge2$ and $\mathbf u$ be a word such that $\mathbf u_h=\mathbf w_\zeta$ for some $\zeta\in S_2^n$ and $h\in\mathsf{sim}(\mathbf u)$.
Assume that $h$ is adjacent to two different multiple variables of $\mathbf u$.
If $\mathbf u\approx \mathbf v$ is an identity of $\mathbb M(\mathscr W_n)\{\mathsf{Id}(\upsilon_{\mathscr W_n})\}$, then $\mathbf v_h=\mathbf w_\zeta$.
\end{corollary}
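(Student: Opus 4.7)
The plan is to reduce the corollary to one of Lemmas~\ref{lem:1x-1c-1y} (for the case $i=1$) or~\ref{lem:2x-2c-2y} (for the case $i=2$), where $i\in\{1,2\}$ records whether $h$ sits between the first or the second occurrences of its multiple neighbors $x$ and $y$ in $\mathbf u$. A quick inspection of the shape of $\mathbf w_\zeta$ shows that a factor consisting of two adjacent multiple variables can only lie in the middle block (between $_{1\mathbf u}a_1$ and $_{1\mathbf u}b_n$) or in the $\mathbf r$-block (between $_{2\mathbf u}b$ and $_{2\mathbf u}a$), so exactly one of the two alternatives occurs for the factor $xhy$ of $\mathbf u$; in either case $\mathbf u$ itself is block-linear because inserting a simple variable into a block-linear word cannot create repetitions inside a block.

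To apply the chosen lemma, I would fix a variable $c \notin \mathsf{con}(\mathbf u)\cup\mathsf{con}(\mathbf v)$ and let $\sigma$ be the substitution sending $h$ to $c$ and fixing every other variable. Since the variety is closed under substitution and under one-sided multiplication in the monoid, from $\mathbf u\approx\mathbf v$ one deduces an identity $\mathbf u^\ast\approx\mathbf v^\ast$ in $\mathbb M(\mathscr W_n)\{\mathsf{Id}(\upsilon_{\mathscr W_n})\}$, where $\mathbf u^\ast:=\sigma(\mathbf u)\,c$ and $\mathbf v^\ast:=\sigma(\mathbf v)\,c$ in the case $i=1$, while in the case $i=2$ the extra $c$ is prepended on the left. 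By design, $c$ has exactly two occurrences in $\mathbf u^\ast$, one in the position formerly occupied by $h$ and one at the outer boundary; moreover $\mathbf u^\ast_c=\mathbf u_h=\mathbf w_\zeta$, $\mathbf u^\ast$ is still block-linear, and the factor condition $_{i\mathbf u^\ast}x\cdot{_{i\mathbf u^\ast}c}\cdot{_{i\mathbf u^\ast}y}$ of the relevant lemma is immediate. In the generic configuration the non-adjacency hypothesis on the other occurrence of $c$ is also satisfied, so Lemma~\ref{lem:1x-1c-1y} or Lemma~\ref{lem:2x-2c-2y} produces $\mathbf v^\ast_c=\mathbf w_\zeta$, which rewrites as $\mathbf v_h=\mathbf w_\zeta$, as required.

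The hard part will be the boundary configurations in which the extra occurrence of $c$ falls adjacent to $_{3-i\,\mathbf u^\ast}x$ or $_{3-i\,\mathbf u^\ast}y$; this happens for instance when $x=a$ or $y=a$ in the case $i=1$, since $\mathbf w_\zeta$ terminates with $a$. There the direct application of Lemma~\ref{lem:1x-1c-1y} is blocked by its non-adjacency hypothesis. To handle such configurations I would appeal to parts~(ii) and~(iii) of Lemma~\ref{lem:2x-2c-2y}, which were tailored precisely to allow the exceptional cases $x=b$ and $y=a$ under a weaker separated-blocks hypothesis, and, when even these do not apply directly, place the auxiliary $c$ elsewhere in $\mathbf u^\ast$ by means of a substitution of the form $\varphi(v_0)=c v_0$ with $v_0$ a judiciously chosen simple variable of $\mathbf w_\zeta$. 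The remaining work is then a short finite case analysis, one case per position of an adjacent multiple-multiple pair in $\mathbf w_\zeta$, each matched with an instance of Lemma~\ref{lem:1x-1c-1y}, Lemma~\ref{lem:2x-2c-2y}, or Lemma~\ref{lem:u_{c,h}} whose hypotheses are satisfied by the corresponding auxiliary word.
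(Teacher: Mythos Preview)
Your strategy is basically sound, but it takes a long detour to reach what the paper does in a single line. The paper does not rename $h$, and it does not append or prepend anything at a boundary. Instead it picks an index $j\in\{0,\dots,n\}$ with ${_{2\mathbf u}y_j}$ not adjacent to ${_{1\mathbf u}h}$ (possible since $n\ge2$) and applies the substitution $\varphi(s_j)=s_jh$, $\varphi(v)=v$ otherwise, to \emph{both} sides of $\mathbf u\approx\mathbf v$. Now $h$ is multiple in $\varphi(\mathbf u)$, the inserted occurrence sits in the harmless position right before ${_{1}y_j}$ in the $\mathbf q$-part, and $(\varphi(\mathbf u))_h=\mathbf w_\zeta$. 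Lemmas~\ref{lem:1x-1c-1y} and~\ref{lem:2x-2c-2y} then apply directly to $\varphi(\mathbf u)\approx\varphi(\mathbf v)$, in all cases, and give $(\varphi(\mathbf v))_h=\mathbf w_\zeta$, hence $\mathbf v_h=\mathbf w_\zeta$. The choice of $j$ guarantees precisely that the non-adjacency hypotheses of those lemmas hold; no residual case analysis is needed. Note that this is exactly the ``place the auxiliary elsewhere via a substitution on a simple variable'' idea you listed as a last-resort fallback: the paper simply uses it as the \emph{only} step.

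Two specific problems with your write-up. First, your appeal to parts~(ii) and~(iii) of Lemma~\ref{lem:2x-2c-2y} for the $i=1$ boundary configurations is misplaced: those clauses concern the situation where ${_{2}c}$ lies between ${_{2}x}$ and ${_{2}y}$, whereas in your $i=1$ case it is ${_{1}c}$ that is sandwiched, so Lemma~\ref{lem:1x-1c-1y} is the only one available and it has no exceptional clauses for $x=a$ or $y=a$. Second, when you say ``place the auxiliary $c$ elsewhere in $\mathbf u^\ast$'', this cannot literally be done in $\mathbf u^\ast$ without producing a third occurrence of $c$ and violating $\mathsf{occ}_c=2$; you must discard the append/prepend and perform the substitution on $\mathbf u$ itself. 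Once you see this, the whole rename--append--case-split scaffolding becomes redundant.
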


\begin{proof}
Obviously, there is $j\in\{0,1,\dots,n\}$ such that ${_{2\mathbf u}y_j}$ is not adjacent to ${_{1\mathbf u}h}$ in $\mathbf u$. 
Then Lemmas~\ref{lem:2x-2c-2y} and~\ref{lem:1x-1c-1y} imply that $(\varphi(\mathbf v))_h=(\varphi(\mathbf u))_h=\mathbf w_\zeta$, where $\varphi\colon \mathscr X \to \mathscr X^\ast$ is the substitution given by 
\[
\varphi(v):= 
\begin{cases} 
s_jh & \text{if $v=s_j$}, \\ 
v & \text{if $v\ne s_j$}.
\end{cases} 
\]
It remains to note that $\mathbf v_h=(\varphi(\mathbf v))_h$.
\end{proof}

\begin{lemma}
\label{lem:2x-2c1-2c2-2y}
Let $n\ge2$ and $\mathbf u$ be a word such that $\mathbf u_{\{c_1,c_2\}}=\mathbf w_\zeta$ for some $\zeta\in S_2^n$ and $c_1,c_2\in\mathsf{mul}(\mathbf u)$ with $\mathsf{occ}_{c_1}(\mathbf u)=\mathsf{occ}_{c_2}(\mathbf u)=2$.
Assume that, for some $x,y\in \mathsf{mul}(\mathbf w_\zeta)$, the word ${_{2\mathbf u}x}\,{_{2\mathbf u}c}_1\,{_{2\mathbf u}c}_2\,{_{2\mathbf u}y}$ is a factor of $\mathbf u$, while ${_{1\mathbf u}c_1}$ and ${_{1\mathbf u}c_2}$ lie in the same blocks as ${_{1\mathbf u}y}$ and ${_{1\mathbf u}x}$ in $\mathbf u$, respectively. 
If $\mathbf u\approx \mathbf v$ is an identity of $\mathbb M(\mathscr W_n)\{\mathsf{Id}(\upsilon_{\mathscr W_n})\}$, then $\mathbf v_{\{c_1,c_2\}}=\mathbf w_\zeta$.
\end{lemma}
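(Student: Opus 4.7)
The plan is to follow the inductive strategy of Lemmas \ref{lem:2x-2c-2y} and \ref{lem:1x-1c-1y}. By Proposition \ref{prop:deduction}, I would fix a finite sequence of distinct words $\mathbf{u}=\mathbf{v}_0,\mathbf{v}_1,\dots,\mathbf{v}_m=\mathbf{v}$ in which each step $\mathbf{v}_i\approx\mathbf{v}_{i+1}$ either holds in $\mathbb{M}(\mathscr{W}_n)$ or is directly deducible from an identity in $\mathsf{Id}(\upsilon_{\mathscr{W}_n})$, and then induct on $m$.

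In the base case $m=1$, the subcase where $\mathbf{u}\approx\mathbf{v}$ holds in $\mathbb{M}(\mathscr{W}_n)$ is dispatched by Lemma \ref{lem:M(W)} together with the fact that $\mathbf{w}_\zeta$ is an isoterm for $\mathbb{M}(\mathscr{W}_n)$. Otherwise $\mathbf{u}\approx\mathbf{v}$ is directly deducible from some $\mathbf{w}_\xi\approx\mathbf{w}_\eta$, and I would apply Lemma \ref{lem:u_C} with $\mathscr{C}=\{c_1,c_2\}$: condition (c) is immediate since $x,y\in\mathsf{mul}(\mathbf{w}_\zeta)$ combined with the factor ${_{2\mathbf{u}}x}\,{_{2\mathbf{u}}c_1}\,{_{2\mathbf{u}}c_2}\,{_{2\mathbf{u}}y}$ forces $({_{2\mathbf{u}}b})<({_{2\mathbf{u}}c_k})<({_{2\mathbf{u}}a})$ for $k=1,2$, while conditions (a) and (b) follow from $\mathbf{u}_{\{c_1,c_2\}}=\mathbf{w}_\zeta$, the factor-uniqueness of $\mathbf{w}_\zeta$ recorded in the proof of Lemma \ref{lem:directly}, and the fact that $c_1,c_2$ are multiple in $\mathbf{u}$ (so they introduce no simple letters) and occur only within the declared short factor on the second-occurrence side and within the prescribed blocks on the first-occurrence side.

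For the inductive step $m>1$, applying the base-case argument to $\mathbf{u}\approx\mathbf{v}_1$ first yields $(\mathbf{v}_1)_{\{c_1,c_2\}}=\mathbf{w}_\zeta$. Next, I would verify that $\mathbf{v}_1$ inherits the structural hypotheses of the lemma, so that the induction hypothesis applies to $\mathbf{v}_1\approx\mathbf{v}$. The persistence of the four-letter factor ${_{2}x}\,{_{2}c_1}\,{_{2}c_2}\,{_{2}y}$ is planned via the isoterm $xytzsxzy$ from Lemma \ref{lem:isotrems-for-M(W_n)Id(U_n)}, applied under a substitution mapping its five variables to $x,c_1,c_2,y$ together with a simple separator $s_j$ drawn from $\mathbf{q}$. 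The first-occurrence block memberships are to be maintained with the aid of the isoterm $xyx$ (preventing the merging of two different blocks) and the isoterms $xyzxty,xzytxy$ (controlling the interplay between first and second occurrences). The hard part will be verifying these block-membership conditions: one must rule out that a single deducible step relocates ${_{1}c_k}$ relative to the block of the corresponding first occurrence of $x$ or $y$, which I plan to handle by the substitution trick used in the proof of Lemma \ref{lem:2x-2c-2y}—composing with a swap renaming $c_k$ with the relevant multiple variable and deriving a contradiction with the uniqueness of $\zeta$ via Lemma \ref{lem:u_C} or with Lemma \ref{lem:isotrems-for-M(W_n)Id(U_n)}.
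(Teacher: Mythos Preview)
Your inductive skeleton and your base case match the paper's proof exactly. The divergence is in the induction step, where you overcomplicate matters and miss the key structural observation that makes the paper's argument short.

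The point you overlook is that the hypotheses already force ${_{1\mathbf u}x}$ and ${_{1\mathbf u}y}$ to lie in \emph{different} blocks of $\mathbf u$. Indeed, since $\mathbf u_{\{c_1,c_2\}}=\mathbf w_\zeta$ and ${_{2\mathbf u}x}$, ${_{2\mathbf u}y}$ are adjacent in $\mathbf w_\zeta$, an inspection of $\mathbf r$ shows that one of $x,y$ belongs to $\{a,b,a_i,b_i,x_1^{(i)},x_2^{(i)}\}$ (first occurrence in the large middle block) and the other to $\{y_j,z_i,z_i',z_i''\}$ (first occurrence isolated between two simple letters). Given this, the paper's induction step is just two lines: the isoterm $xyx$ preserves the block structure, so ${_{1\mathbf v_1}c_1}$ stays in the block of ${_{1\mathbf v_1}y}$ and ${_{1\mathbf v_1}c_2}$ in the block of ${_{1\mathbf v_1}x}$ (and the occurrence counts persist); then, because the first occurrences of $x,c_1,c_2,y$ split over two distinct blocks in the pattern $x$-with-$c_2$ versus $y$-with-$c_1$, the isoterm $xzytxy$ forces the relative order of each of the pairs $({_{2}x},{_{2}c_1})$, $({_{2}c_1},{_{2}c_2})$, $({_{2}c_2},{_{2}y})$ in $\mathbf v_1$, and together with $(\mathbf v_1)_{\{c_1,c_2\}}=\mathbf w_\zeta$ this yields the factor ${_{2\mathbf v_1}x}\,{_{2\mathbf v_1}c_1}\,{_{2\mathbf v_1}c_2}\,{_{2\mathbf v_1}y}$.

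By contrast, your plan has two concrete problems. First, your proposed use of the isoterm $xytzsxzy$, ``mapping its five variables to $x,c_1,c_2,y$ together with a simple separator $s_j$'', cannot work as stated: that isoterm has three multiple letters and two simple letters, whereas $x,c_1,c_2,y$ are four multiple letters; no single substitution of the shape you describe controls all three adjacencies simultaneously. Second, the ``hard part'' you anticipate---ruling out that a single step relocates ${_{1}c_k}$ relative to the block of ${_{1}x}$ or ${_{1}y}$---is not hard at all: it is immediate from $xyx$ being an isoterm, and the substitution trick from Lemma~\ref{lem:2x-2c-2y} is not needed here. Once you record the different-blocks observation for ${_{1}x}$ and ${_{1}y}$, the entire induction step collapses to the short $xyx$/$xzytxy$ argument above.
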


\begin{proof}
In view of Proposition~\ref{prop:deduction}, there is some finite sequence $\mathbf u = \mathbf v_0, \mathbf v_1, \ldots, \mathbf v_m = \mathbf v$ of distinct words such that each identity $\mathbf v_i \approx \mathbf v_{i+1}$ either holds in $\mathbb M(\mathscr W_n)$ or is directly deducible from some identity in~$\mathsf{Id}(\upsilon_{\mathscr W_n})$.
We will use induction on $m$.

\smallskip

\textbf{Induction base:} $m=1$. 
If $\mathbf u=\mathbf v_0 \approx \mathbf v_1=\mathbf v$ holds in $\mathbb M(\mathscr W_n)$, then the required claim follows from Lemma~\ref{lem:M(W)}.
If $\mathbf u=\mathbf v_0 \approx \mathbf v_1=\mathbf v$ is directly deducible from some identity in~$\mathsf{Id}(\upsilon_{\mathscr W_n})$, then the condition of the lemma implies that every factor of length $>1$ of $\mathbf u$ has exactly one occurrence in $\mathbf u$ and $({_{2\mathbf u}b})<({_{2\mathbf u}c_1})<({_{2\mathbf u}a})$.
Then we can apply Lemma~\ref{lem:u_C}, yielding that $\mathbf v_{\{c_1,c_2\}}=\mathbf w_\zeta$.

\smallskip

\textbf{Induction step:} $m>1$. 
First, notice that, as in the induction base, $(\mathbf v_1)_{\{c_1,c_2\}}=\mathbf w_\zeta$ by Lemmas~\ref{lem:M(W)} and~\ref{lem:u_C}.
Since $xyx$ is an isoterm for the variety $\mathbb M(\mathscr W_n)\{\mathsf{Id}(\upsilon_{\mathscr W_n})\}$ by Lemma~\ref{lem:isotrems-for-M(W_n)Id(U_n)}, ${_{1\mathbf v_1}c_1}$ and ${_{1\mathbf v_1}c_2}$ lie in the same blocks as ${_{1\mathbf v_1}y}$ and ${_{1\mathbf v_1}x}$ in $\mathbf v_1$, respectively, and $\mathsf{occ}_{c_1}(\mathbf v_1)=\mathsf{occ}_{c_2}(\mathbf v_1)=2$. 
Further, ${_{1\mathbf u}y}$ and ${_{1\mathbf u}x}$ lie in different blocks of $\mathbf u$.
Hence, since $xzytxy$ is an isoterm for the variety $\mathbb M(\mathscr W_n)\{\mathsf{Id}(\upsilon_{\mathscr W_n})\}$ by Lemma~\ref{lem:isotrems-for-M(W_n)Id(U_n)}, the word ${_{2\mathbf v_1}x}\,{_{2\mathbf v_1}c}_1\,{_{2\mathbf v_1}c}_2\,{_{2\mathbf v_1}y}$ must be a factor of $\mathbf v_1$. 
Thus, we can apply the induction assumption, yielding that $\mathbf v_{\{c_1,c_2\}}=\mathbf w_\zeta$ as required.
\end{proof}

\begin{lemma}
\label{lem:1x-1c1-1c2-1y}
Let $n\ge2$ and $\mathbf u$ be a word such that $\mathbf u_{\{c_1,c_2\}}=\mathbf w_\zeta$ for some $\zeta\in S_2^n$ and $c_1,c_2\in\mathsf{mul}(\mathbf u)$ with $\mathsf{occ}_{c_1}(\mathbf u)=\mathsf{occ}_{c_2}(\mathbf u)=2$.
Assume that, for some $x,y\in \mathsf{mul}(\mathbf w_\zeta)$, the word ${_{1\mathbf u}x}\,{_{1\mathbf u}c}_1\,{_{1\mathbf u}c}_2\,{_{1\mathbf u}y}$ is a factor of $\mathbf u$, while ${_{2\mathbf u}c_1}$ and ${_{2\mathbf u}c_2}$ are adjacent to ${_{2\mathbf u}y}$ and ${_{2\mathbf u}x}$ in $\mathbf u$, respectively. 
If $\mathbf u\approx \mathbf v$ is an identity of $\mathbb M(\mathscr W_n)\{\mathsf{Id}(\upsilon_{\mathscr W_n})\}$, then $\mathbf v_{\{c_1,c_2\}}=\mathbf w_\zeta$.
\end{lemma}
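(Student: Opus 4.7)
The plan is to mirror the proof of Lemma~\ref{lem:2x-2c1-2c2-2y} almost verbatim, with the roles of first and second occurrences swapped. By Proposition~\ref{prop:deduction}, fix a finite sequence $\mathbf u = \mathbf v_0, \mathbf v_1, \ldots, \mathbf v_m = \mathbf v$ of pairwise distinct words such that each $\mathbf v_i \approx \mathbf v_{i+1}$ either holds in $\mathbb M(\mathscr W_n)$ or is directly deducible from some identity in $\mathsf{Id}(\upsilon_{\mathscr W_n})$, and induct on $m$.

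For the base case $m=1$: if $\mathbf u \approx \mathbf v$ holds in $\mathbb M(\mathscr W_n)$, then since $\mathbf u_{\{c_1,c_2\}}=\mathbf w_\zeta \in \mathscr W_n$ is an isoterm for $\mathbb M(\mathscr W_n)$, Lemma~\ref{lem:M(W)} gives the conclusion. Otherwise $\mathbf u \approx \mathbf v$ is directly deducible from some identity $\mathbf w_\xi \approx \mathbf w_\eta$ in $\mathsf{Id}(\upsilon_{\mathscr W_n})$, and I apply Lemma~\ref{lem:u_C} with $\mathscr C=\{c_1,c_2\}$. Condition~(a) of Lemma~\ref{lem:u_C} holds because $\mathbf w_\zeta$ has no repeated factor of length $>1$ and the only insertions producing $\mathbf u$ are two occurrences of $c_1$ and two of $c_2$ placed in the prescribed adjacent positions. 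Condition~(b) holds because $\mathbf u_{\{c_1,c_2\}}=\mathbf w_\zeta$ and $c_1,c_2$ are multiple. For condition~(c), the hypothesis that $_{1\mathbf u}x\,_{1\mathbf u}c_1\,_{1\mathbf u}c_2\,_{1\mathbf u}y$ is a factor of $\mathbf u$ with $x,y$ multiple in $\mathbf w_\zeta$ forces $x,y$ to lie in the unique part of $\mathbf w_\zeta$ where two first occurrences of multiple variables are adjacent, namely the middle block $a_1\cdots a_n a\cdot (\prod x_{1\xi_i}^{(i)}x_{2\xi_i}^{(i)})\cdot b b_1\cdots b_n$, whence $({_{1\mathbf u}a_1})\le({_{1\mathbf u}x})<({_{1\mathbf u}c_1})<({_{1\mathbf u}b_n})$.

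For the inductive step $m>1$: the base case applied to $\mathbf u \approx \mathbf v_1$ gives $(\mathbf v_1)_{\{c_1,c_2\}}=\mathbf w_\zeta$. I then check that the hypotheses of the lemma transfer from $\mathbf u$ to $\mathbf v_1$, whereupon the induction hypothesis applied to $\mathbf v_1\approx\mathbf v$ completes the proof. The isoterm $xyx$ from Lemma~\ref{lem:isotrems-for-M(W_n)Id(U_n)} forces $\mathsf{occ}_{c_1}(\mathbf v_1)=\mathsf{occ}_{c_2}(\mathbf v_1)=2$. Since $(\mathbf v_1)_{\{c_1,c_2\}}=\mathbf w_\zeta$ and the first occurrences $_{1\mathbf v_1}x$ and $_{1\mathbf v_1}y$ are adjacent in the reading of $\mathbf w_\zeta$, they can only be separated in $\mathbf v_1$ by occurrences of $c_1$ and $c_2$. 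The isoterm $xzytxy$ (and its renamings) from Lemma~\ref{lem:isotrems-for-M(W_n)Id(U_n)} preserves the adjacency of $_{2\mathbf v_1}c_1$ to $_{2\mathbf v_1}y$ and of $_{2\mathbf v_1}c_2$ to $_{2\mathbf v_1}x$; the only available occurrences of $c_1,c_2$ that can appear between $_{1\mathbf v_1}x$ and $_{1\mathbf v_1}y$ are therefore the first ones, and the preserved order $x c_1 c_2 y$ in the first-occurrence positions (secured by the same isoterm, applied to the four-variable projection $\mathbf v_1(x,y,c_1,c_2)$) yields that $_{1\mathbf v_1}x\,_{1\mathbf v_1}c_1\,_{1\mathbf v_1}c_2\,_{1\mathbf v_1}y$ is a factor of $\mathbf v_1$.

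The main technical obstacle, as in Lemma~\ref{lem:2x-2c1-2c2-2y}, is the inductive transfer of the positional hypotheses to $\mathbf v_1$. One must rule out that a single direct-deduction step moves $_{1\mathbf v_1}c_1$ or $_{1\mathbf v_1}c_2$ out of the space between $_{1\mathbf v_1}x$ and $_{1\mathbf v_1}y$ or swaps their order, and likewise that it breaks the second-occurrence adjacencies. I expect this to be handled uniformly by considering the projections of $\mathbf v_1$ onto the four relevant variables $x,y,c_1,c_2$ (together with an auxiliary simple variable of $\mathbf w_\zeta$ lying between the relevant blocks, to witness the isoterm structure), and invoking the isoterms $xyx$, $xzytxy$, $xyzxty$ and $xytzsxzy$ supplied by Lemma~\ref{lem:isotrems-for-M(W_n)Id(U_n)} to rule out every possible reordering in one deduction step.
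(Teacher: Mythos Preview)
Your overall induction scheme and the base case via Lemma~\ref{lem:u_C} are correct and match the paper. The gap is in the inductive step, and it is not a detail: you cannot ``mirror Lemma~\ref{lem:2x-2c1-2c2-2y} almost verbatim'' here, because the two lemmas are genuinely asymmetric. In Lemma~\ref{lem:2x-2c1-2c2-2y} the first occurrences ${_{1\mathbf u}c_1}$ and ${_{1\mathbf u}c_2}$ lie in \emph{different} blocks of $\mathbf u$ (they sit in the blocks of ${_{1\mathbf u}y}$ and ${_{1\mathbf u}x}$, which are separated by simple variables of $\mathbf p$), and this block separation is exactly what lets the isoterm $xzytxy$ pin down the order of the second occurrences. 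In the present lemma the roles are swapped: ${_{2\mathbf u}c_1}$ and ${_{2\mathbf u}c_2}$ sit in the single block $\mathbf r$, and there is no simple variable between ${_{2\mathbf u}x}$ and ${_{2\mathbf u}y}$ to play the role of $z$ or $t$ in $xzytxy$. Your ``four-variable projection $\mathbf v_1(x,y,c_1,c_2)$'' is an $8$-letter word with all four variables multiple; none of the isoterms supplied by Lemma~\ref{lem:isotrems-for-M(W_n)Id(U_n)} has that shape, so it cannot by itself secure $({_{1\mathbf v_1}x})<({_{1\mathbf v_1}c_1})<({_{1\mathbf v_1}c_2})<({_{1\mathbf v_1}y})$.

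The paper resolves this by a case split you are missing. If $\{x,y\}\ne\{x_1^{(k)},x_2^{(k)}\}$ for all $k$, then some ${_{2\mathbf u}y_j}$ lies between ${_{2\mathbf u}x}$ and ${_{2\mathbf u}y}$ inside $\mathbf r$, and the five-variable projections $\mathbf u(c_1,s_j,t,x,y_j)$, $\mathbf u(c_1,c_2,s_j,t,y_j)$, $\mathbf u(c_2,s_j,t,y,y_j)$ match the isoterm $xytzsxzy$ (or its renaming $yxtzsxzy$), which does the job. But if $\{x,y\}=\{x_1^{(k)},x_2^{(k)}\}$ for some $k$, no such $y_j$ exists and the isoterm route fails; instead one observes that $\mathbf u_{\{x,c_1\}}$, $\mathbf u_{\{c_2,y\}}$ and $\mathbf u_{\{x,y\}}$ each coincide (up to renaming) with a word in $\mathscr W_n$, and three separate applications of Lemma~\ref{lem:u_C} then force the three inequalities $({_{1\mathbf v_1}x})<({_{1\mathbf v_1}c_1})$, $({_{1\mathbf v_1}c_2})<({_{1\mathbf v_1}y})$, $({_{1\mathbf v_1}c_1})<({_{1\mathbf v_1}c_2})$. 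Your proposal contains neither the case split nor this triple use of Lemma~\ref{lem:u_C}, and the ``auxiliary simple variable lying between the relevant blocks'' does not exist in the second case.
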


\begin{proof}
In view of Proposition~\ref{prop:deduction}, there is some finite sequence $\mathbf u = \mathbf v_0, \mathbf v_1, \ldots, \mathbf v_m = \mathbf v$ of distinct words such that each identity $\mathbf v_i \approx \mathbf v_{i+1}$ either holds in $\mathbb M(\mathscr W_n)$ or is directly deducible from some identity in~$\mathsf{Id}(\upsilon_{\mathscr W_n})$.
We will use induction on $m$.

\smallskip

\textbf{Induction base:} $m=1$. 
If $\mathbf u=\mathbf v_0 \approx \mathbf v_1=\mathbf v$ holds in $\mathbb M(\mathscr W_n)$, then the required claim follows from Lemma~\ref{lem:M(W)}.
If $\mathbf u=\mathbf v_0 \approx \mathbf v_1=\mathbf v$ is directly deducible from some identity in~$\mathsf{Id}(\upsilon_{\mathscr W_n})$, then the condition of the lemma implies that every factor of length $>1$ of $\mathbf u$ has exactly one occurrence in $\mathbf u$ and $({_{1\mathbf u}a_1})<({_{1\mathbf u}c_1})<({_{1\mathbf u}b_n})$.
Then we can apply Lemma~\ref{lem:u_C}, yielding that $\mathbf v_{\{c_1,c_2\}}=\mathbf w_\zeta$.

\smallskip

\textbf{Induction step:} $m>1$. 
First, notice that, as in the induction base, $(\mathbf v_1)_{\{c_1,c_2\}}=\mathbf w_\zeta$ by Lemmas~\ref{lem:M(W)} and~\ref{lem:u_C}.
Since $xzytxy$ is an isoterm for the variety $\mathbb M(\mathscr W_n)\{\mathsf{Id}(\upsilon_{\mathscr W_n})\}$ by Lemma~\ref{lem:isotrems-for-M(W_n)Id(U_n)}, it is easy to show that $\mathsf{occ}_{c_1}(\mathbf v_1)=\mathsf{occ}_{c_2}(\mathbf v_1)=2$ and the variables ${_{2\mathbf v_1}c_1}$ and ${_{2\mathbf v_1}c_2}$ are adjacent to ${_{2\mathbf v_1}y}$ and ${_{2\mathbf v_1}x}$ in $\mathbf v_1$, respectively. 
Assume first that $\{x,y\}=\{x_1^{(k)},x_2^{(k)}\}$ for some $k\in\{1,2,\dots,n\}$.
Evidently, the words $\mathbf u_{\{x,c_1\}}$ and $\mathbf u_{\{c_2,y\}}$ coincide (up to renaming of variables) with $\mathbf w_\zeta$, while $\mathbf u_{\{x_1,x_2\}}$ coincides (up to renaming of variables) with $\mathbf w_{\bar{\zeta}}$, where $\bar{\zeta}:=(\zeta_1,\dots,\zeta_{k-1},\zeta_k^2,\zeta_{k+1},\dots,\zeta_n)$.
Hence if $\mathbf u \approx \mathbf v_1$ holds in $\mathbb M(\mathscr W_n)$, then ${_{1\mathbf v_1}x}\,{_{1\mathbf v_1}c}_1\,{_{1\mathbf v_1}c}_2\,{_{1\mathbf v_1}y}$ is a factor of $\mathbf v_1$ by Lemma~\ref{lem:M(W)}; if $\mathbf u \approx \mathbf v_1$ is directly deducible from some identity in~$\mathsf{Id}(\upsilon_{\mathscr W_n})$, then we apply Lemma~\ref{lem:u_C} three times, yielding that $({_{1\mathbf v_1}x})<({_{1\mathbf v_1}c}_1)<({_{1\mathbf v_1}c}_2)<({_{1\mathbf v_1}y})$ and so ${_{1\mathbf v_1}x}\,{_{1\mathbf v_1}c}_1\,{_{1\mathbf v_1}c}_2\,{_{1\mathbf v_1}y}$ is a factor of $\mathbf v_1$ again.
Assume now that $\{x,y\}\ne\{x_1^{(k)},x_2^{(k)}\}$ for all $k=1,2,\dots,n$.
In this case, there exists $j\in\{0,1,\dots,n\}$ such that $_{2\mathbf u}y_j$ lies between $_{2\mathbf u}x$ and $_{2\mathbf u}y$ in $\mathbf u$.
Then the words $\mathbf u(c_1,s_j,t,x,y_j)$, $\mathbf u(c_1,c_2,s_j,t,y_j)$ and $\mathbf u(c_2,s_j,t,y,y_j)$ coincide (up to renaming of variables) with either $xysztxzy$ or $yxsztxzy$. 
Since the latter two words are isoterms for $\mathbb M(\mathscr W_n)\{\mathsf{Id}(\upsilon_{\mathscr W_n})\}$ by Lemma~\ref{lem:isotrems-for-M(W_n)Id(U_n)}, we have $\mathbf u(c_1,s_j,t,x,y_j)=\mathbf v_1(c_1,s_j,t,x,y_j)$, $\mathbf u(c_1,c_2,s_j,t,y_j)=\mathbf v_1(c_1,c_2,s_j,t,y_j)$ and $\mathbf u(c_2,s_j,t,y,y_j)=\mathbf v_1(c_2,s_j,t,y,y_j)$.
It follows that $({_{1\mathbf v_1}x})<({_{1\mathbf v_1}c}_1)<({_{1\mathbf v_1}c}_2)<({_{1\mathbf v_1}y})$.
We see that ${_{1\mathbf v_1}x}\,{_{1\mathbf v_1}c}_1\,{_{1\mathbf v_1}c}_2\,{_{1\mathbf v_1}y}$ is a factor of $\mathbf v_1$ in any case.
Thus, we can apply the induction assumption, yielding that $\mathbf v_{\{c_1,c_2\}}=\mathbf w_\zeta$ as required.
\end{proof}

For any $n,m,k\ge1$ and $\rho\in S_{n+m+k}$, we define the words:
\begin{align*}
\mathbf c_{n,m,k}[\rho]&:=\biggl(\prod_{i=1}^n z_it_i\biggr)xyt\biggl(\prod_{i=n+1}^{n+m} z_it_i\biggr)x\biggl(\prod_{i=1}^{n+m+k} z_{i\rho}\biggr)y\biggl(\prod_{i=n+m+1}^{n+m+k} t_iz_i\biggr),\\[-3pt]
\mathbf c_{n,m,k}^\prime[\rho]&:=\biggl(\prod_{i=1}^n z_it_i\biggr)yxt\biggl(\prod_{i=n+1}^{n+m} z_it_i\biggr)x\biggl(\prod_{i=1}^{n+m+k} z_{i\rho}\biggr)y\biggl(\prod_{i=n+m+1}^{n+m+k} t_iz_i\biggr).
\end{align*}
Let $\mathbb O$ denote the variety defined by the first four identities in~\eqref{eq:five identities} together with all the identities of the form
\[
\mathbf c_{n,m,k}[\rho]\approx\mathbf c_{n,m,k}^\prime[\rho]
\]
with $n,m,k\ge1$ and $\rho\in S_{n+m+k}$.
An \textit{island} of a word $\mathbf w$ is a maximal factor of $\mathbf w$ that consists of only the second occurrences of variables whose first occurrences lie in the same block of $\mathbf w$. 
The next statement follows from the dual to Lemma~3.12 in~\cite{Gusev-Vernikov-21}.

\begin{lemma}
\label{lem:swapping}
If $\mathbf w:=\mathbf p\,{_{2\mathbf w}x}\,{_{2\mathbf w}y}\,\mathbf q$ and the variables ${_{2\mathbf w}x}$ and ${_{2\mathbf w}y}$ lie in the same island of $\mathbf w$, then $\mathbb O$ satisfies the identity $\mathbf w\approx\mathbf p\,yx\,\mathbf q$.\qed
\end{lemma}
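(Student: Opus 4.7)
My plan is to prove this swap-identity by mirroring, in the dual direction, the argument of the cited Lemma~3.12 of~\cite{Gusev-Vernikov-21}. Write $\mathbf{w}=\mathbf{a}\,B\,\mathbf{b}\,I\,\mathbf{c}$, where $B$ is the block of $\mathbf{w}$ containing $_{1\mathbf w}x$ and $_{1\mathbf w}y$ (which exists precisely because the adjacent second occurrences lie in a common island), $I$ is the island of $\mathbf{w}$ containing the factor ${_{2\mathbf w}x}\,{_{2\mathbf w}y}$, and $\mathbf{b}$ is whatever separates $B$ from $I$. By definition of island, $\mathbf{b}$ consists only of simple variables of $\mathbf{w}$, of second occurrences of variables from blocks strictly preceding $B$, and of first occurrences of multiple variables whose second occurrences appear strictly after $I$.

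The first step is a canonicalization using only the first four identities in~\eqref{eq:five identities}. Using $x^2\approx x^3$ and $x^2y\approx yx^2$ to move squares to the right, $xyxzx\approx x^2yz$ to collapse interleavings within a block, and $xzxyty\approx xzyxty$ to permute first occurrences inside a common block, I would reduce $\mathbf{w}$ to a form in which $_{1\mathbf w}x$ and $_{1\mathbf w}y$ sit adjacently inside $B$ (in the order $xy$, forced to match the order of their second occurrences), and the multiple variables whose second occurrences populate $\mathbf{b}$ and $I$ are partitioned into the three contiguous groups indexed by $1\le i\le n$, $n<i\le n+m$, $n+m<i\le n+m+k$ in the scheme defining $\mathbf{c}_{n,m,k}[\rho]$.

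Once $\mathbf{w}$ is in this canonical form, the factor containing the pair $_{1\mathbf w}x\,_{1\mathbf w}y$ in $B$, the separating segment $\mathbf{b}$, and the island $I$ is visibly a substitution instance of $\mathbf{c}_{n,m,k}[\rho]$ (the simple variables $t, t_i$ of the scheme absorb the simple variables of $\mathbf{b}$, the permutation $\rho$ records the order of second occurrences inside $I$). Applying the defining identity $\mathbf{c}_{n,m,k}[\rho]\approx\mathbf{c}'_{n,m,k}[\rho]$ swaps the first-occurrence pair $xy\mapsto yx$; reversing the canonicalization then produces, in $\mathbf{w}$, the desired swap of the second-occurrence pair $_{2\mathbf w}x\,_{2\mathbf w}y\mapsto yx$, as required. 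The observation that this yields a swap of second occurrences rather than first is precisely the "duality" being invoked: the same substitution instance of the scheme $\mathbf{c}_{n,m,k}[\rho]\approx\mathbf{c}'_{n,m,k}[\rho]$ can be read as swapping either the $xy$ in the block $B$ or, viewed from the perspective of the island $I$, the adjacent second occurrences that flank the middle block in the scheme.

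The main obstacle is the bookkeeping in the canonicalization: showing that the first four identities really suffice to permute first occurrences within a block into any order demanded by some $\rho\in S_{n+m+k}$, and simultaneously to rearrange $\mathbf{b}$ into the required alternating pattern $\prod z_it_i$. This is exactly the content of the cited Lemma~3.12 of~\cite{Gusev-Vernikov-21} in the first-occurrence setting; the dual version needed here is obtained by interchanging the roles of first and second occurrences throughout, which is legitimate because both the hypotheses (island versus co-island) and the defining identities of $\mathbb{O}$ are invariant under this duality.
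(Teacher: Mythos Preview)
The paper does not give an independent proof of this lemma at all: it simply records that the statement is the left--right mirror of Lemma~3.12 in~\cite{Gusev-Vernikov-21}. Here ``dual'' means word reversal; the variety treated in that external lemma is the mirror of $\mathbb O$ (equivalently, the defining identities of $\mathbb O$ are the reversals of the identities used there), so reversing every word throughout the proof of Lemma~3.12 yields the present statement verbatim. No new argument is needed.

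Your attempt is more ambitious---a self-contained derivation inside $\mathbb O$---but it has a genuine gap. The defining identities $\mathbf c_{n,m,k}[\rho]\approx\mathbf c'_{n,m,k}[\rho]$ differ in exactly one place: the adjacent pair $xy$ versus $yx$, and in both words this pair consists of the \emph{first} occurrences of $x$ and $y$ (the second occurrences of $x$ and $y$ sit on either side of the block $\prod z_{i\rho}$ and are unchanged). Hence your plan---canonicalize, match the first-occurrence pair in $B$ to the $xy$ of the scheme, apply the identity, de-canonicalize---produces the word with \emph{first} occurrences of $x,y$ swapped, not second occurrences. Your sentence ``the same substitution instance \ldots\ can be read as swapping either the $xy$ in the block $B$ or \ldots\ the adjacent second occurrences'' is not correct: there is a single swap, in a single position, and it is the first-occurrence pair. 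The appeal to ``duality'' in your final paragraph does not repair this; it would require proving that $\mathbb O$ is closed under word reversal (in particular, that the reversed $\mathbf c$-family is derivable in $\mathbb O$), which is precisely the nontrivial content you are trying to establish and cannot be assumed.

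There is also a smaller issue in the canonicalization step. The fourth identity $xzxyty\approx xzyxty$ swaps a \emph{second} occurrence of one variable with a \emph{first} occurrence of another when they are adjacent; it does not by itself permute two first occurrences inside a block, so the claimed reduction putting ${_{1\mathbf w}x}$ and ${_{1\mathbf w}y}$ adjacent in $B$ does not follow from the first four identities alone.
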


If $xy$ is an isoterm for a variety $\mathbb V$, then it is easy to see that every identity of $\mathbb V$ is of the form
\begin{equation}
\label{eq:..t_iu_i..=..t_iv_i..}
\mathbf u_0\biggl(\prod_{i=1}^m t_i\mathbf u_i\biggr) \approx \mathbf v_0 \biggl(\prod_{i=1}^m t_i\mathbf v_i\biggr),
\end{equation}
where $\mathsf{sim}(\mathbf u)=\mathsf{sim}(\mathbf v) = \{t_1,t_2, \dots, t_m\}$ for some $m \ge 0$.
For each $i=0,1,\dots, m$, we say the blocks $\mathbf u_i$ and $\mathbf v_i$ are \textit{corresponding}.
An identity of the form~\eqref{eq:..t_iu_i..=..t_iv_i..} with $\mathsf{sim}(\mathbf u)=\mathsf{sim}(\mathbf v) = \{t_1,t_2, \dots, t_m\}$ is \textit{linear-balanced} if, for any $i=0,1,\dots, m$, the corresponding blocks $\mathbf u_i$ and $\mathbf v_i$ are linear words depending on the same variables. 
A linear-balanced identity $\mathbf u \approx \mathbf v$ is \textit{reduced} if all corresponding blocks of $\mathbf u$ and $\mathbf v$ are of the form $\mathbf a\mathbf c$ and $\mathbf b\mathbf c$, where $\mathbf a$ and $\mathbf b$ consist of the first occurrences of variables in $\mathbf u$ and $\mathbf v$, respectively, while $\mathbf c$ consists of the second occurrences of variables in both $\mathbf u$ and $\mathbf v$. 
Evidently, if $\mathbf u \approx \mathbf v$ is a reduced identity, then every variable occurs in both $\mathbf u$ and $\mathbf v$ at most twice.

\begin{lemma}
\label{lem:O}
Each variety in the interval $[\mathbb M(xzytxy),\mathbb O]$ may be defined within $\mathbb O$ by a set of reduced identities.
\end{lemma}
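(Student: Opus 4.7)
The plan is to show that any identity $\mathbf u\approx\mathbf v$ of $\mathbb V$ is equivalent, modulo $\mathbb O$, to a reduced identity of $\mathbb V$; the collection of all reduced identities satisfied by $\mathbb V$ then axiomatizes $\mathbb V$ within $\mathbb O$. Because $\mathbb M(xzytxy)\subseteq\mathbb V$, Lemma~\ref{lem:M(W)} yields that $xzytxy$ and its factor $xy$ are isoterms for $\mathbb V$, so $\mathbf u$ and $\mathbf v$ share the same sequence of simple variables and $\mathbf u\approx\mathbf v$ takes the block form~\eqref{eq:..t_iu_i..=..t_iv_i..}.

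The reduction proceeds in three stages. In the first stage I use the $\mathbb O$-axioms $x^2\approx x^3$, $x^2y\approx yx^2$ and $xyxzx\approx x^2yz$ to eliminate every intra-block repetition of a multiple variable: $x^2\approx x^3$ caps exponents, $x^2y\approx yx^2$ makes squares central and absorbable, and $xyxzx\approx x^2yz$ collapses a pair of separated within-block repetitions of a variable into a square that the previous two identities can then handle. The output is a linear-balanced identity $\mathbf u_1\approx\mathbf v_1$. In the second stage I apply the fourth $\mathbb O$-axiom $xzxyty\approx xzyxty$ with the substitution $\varphi(x)=\alpha$, $\varphi(y)=\beta$, $\varphi(z)$ equal to the factor of $\mathbf u_1$ lying between the two occurrences of $\alpha$, and $\varphi(t)$ equal to the factor between the two occurrences of $\beta$. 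This swaps any adjacent pair (second-$\alpha$)(first-$\beta$) in a block into (first-$\beta$)(second-$\alpha$), and iterating pushes every first occurrence to the left of every second occurrence in each block, producing $\mathbf u_2\approx\mathbf v_2$. In the third stage I apply Lemma~\ref{lem:swapping} inside each island to order its second occurrences canonically (say lexicographically).

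It remains to verify that the resulting identity $\mathbf u'\approx\mathbf v'$ is reduced, i.e.\ that the second-occurrence suffixes of corresponding blocks actually coincide. Their contents match (linear-balance), and within each island the internal ordering is canonical, so the only possible discrepancy is two multiple variables $\alpha,\beta$ (with first occurrences in earlier blocks $j_A,j_B$ respectively) whose second occurrences in some common block appear in the order $\ldots\alpha\ldots\beta\ldots$ in $\mathbf u'$ but $\ldots\beta\ldots\alpha\ldots$ in $\mathbf v'$. The substitution that retains only $\alpha$, $\beta$, and the two simple variables immediately following blocks $j_A$ and $j_B$, sending every other variable to the empty word, would then reduce $\mathbf u'\approx\mathbf v'$ to a nontrivial identity of the form $xzytxy\approx xzytyx$, contradicting that $xzytxy$ is an isoterm for $\mathbb V$. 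So no such discrepancy exists and $\mathbf u'\approx\mathbf v'$ is reduced.

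I expect the first stage to be the main technical obstacle: verifying that the three listed axioms genuinely suffice to eliminate every intra-block repetition is a delicate, essentially normal-form-theoretic bookkeeping argument. The second stage is a careful single application of a defining identity of $\mathbb O$, and the third stage cleanly combines Lemma~\ref{lem:swapping} with the isoterm-based island-order comparison described above.
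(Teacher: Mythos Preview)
Your three-stage plan matches the paper's, and stages~2 and~3 together with your final isoterm check are essentially what the paper does. The genuine gap is in stage~1. First, ``linear-balanced'' is strictly weaker than what you need: a variable $x$ may occur once in each of three distinct blocks, giving a linear-balanced word with $\mathsf{occ}_x(\mathbf u)=3$, yet a reduced identity requires every variable to occur at most twice. Your intra-block elimination never touches such an $x$, and your stages~2 and~3 (and the isoterm argument) tacitly assume exactly two occurrences. Second, even restricted to the intra-block case, the three axioms you list do not suffice: if $x$ occurs exactly twice in $\mathbf u$ with both occurrences in a single block, separated by multiple variables, then $xyxzx\approx x^2yz$ is unavailable (only two $x$'s are present) and $x^2y\approx yx^2$ is unavailable (no adjacent square), so you cannot collapse the repetition with those tools alone.

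The paper resolves both issues simultaneously and by a different mechanism. It collects into a set $\mathscr B$ every multiple variable that either has more than two occurrences or has both occurrences in one block, and then, invoking auxiliary results from \cite{Gusev-Vernikov-18,Gusev-Vernikov-21} that use the full axiom system of $\mathbb O$ (not merely the first three or four identities in~\eqref{eq:five identities}), proves that $\mathbb O$ satisfies $\mathbf u\approx b_1^2\cdots b_r^2\,\mathbf u_{\mathscr B}$ and the analogous identity for $\mathbf v$. Since the square prefix is identical on both sides, $\mathbf u\approx\mathbf v$ is equivalent within $\mathbb O$ to $\mathbf u_{\mathscr B}\approx\mathbf v_{\mathscr B}$, in which every remaining multiple variable occurs exactly twice and in two different blocks. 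From that point your stages~2 and~3 go through, and indeed coincide with the paper's argument.
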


\begin{proof}
We need to show that an arbitrary identity $\mathbf u\approx \mathbf v$ of $\mathbb M(xzytxy)$ is equivalent within $\mathbb O$ to a reduced identity.
Let
\begin{align*}
&\mathscr A:=\{x\in\mathsf{mul}(\mathbf u)\mid \mathbf u(xyx)=xyx \text{ for some } y\in\mathsf{sim}(\mathbf u)\}\, \text{ and } \\ 
&\mathscr B:=\mathsf{mul}(\mathbf u)\setminus\mathscr A=\{x\in\mathsf{mul}(\mathbf u)\mid \mathsf{occ}_x(\mathbf u)>2 \text{ or } {_{1\mathbf u}x}\text{ and } {_{2\mathbf u}x} \text{ lie in the same block of } \mathbf u \}.
\end{align*}
Since the word $xyx$ is an isoterm for $\mathbb M(xzytxy)$, it is routine to show that
\begin{align*}
&\mathscr A=\{x\in\mathsf{mul}(\mathbf v)\mid \mathbf v(xyx)=xyx \text{ for some } y\in\mathsf{sim}(\mathbf v)\}\, \text{ and } \\ 
&\mathscr B=\mathsf{mul}(\mathbf v)\setminus\mathscr A=\{x\in\mathsf{mul}(\mathbf v)\mid \mathsf{occ}_x(\mathbf v)>2 \text{ or } {_{1\mathbf v}x}\text{ and } {_{2\mathbf v}x} \text{ lie in the same block of } \mathbf v \}.
\end{align*}
Let $\mathscr B=\{b_1,b_2,\dots,b_r\}$.
Arguments similar to those of the proof of Lemma~3.11 in~\cite{Gusev-Vernikov-21} imply that the identity
\[
\biggl(\prod_{i=1}^n z_it_i\biggr) x \biggl(\prod_{i=1}^{n+m} z_{i\rho}\biggr) x \biggl(\prod_{i=n+1}^{n+m} t_iz_i\biggr) \approx \biggl(\prod_{i=1}^n z_it_i\biggr) x^2 \biggl(\prod_{i=1}^{n+m} z_{i\rho}\biggr) \biggl(\prod_{i=n+1}^{n+m} t_iz_i\biggr) 
\] 
is satisfied by $\mathbb O$ for any $n,m\ge1$ and $\rho\in S_{n+m}$.
Then, by Lemma~4.5 in~\cite{Gusev-Vernikov-18}, the variety $\mathbb O$ satisfies the identities $\mathbf u\approx b_1^2\cdots b_r^2\mathbf u_{\mathscr B}$ and $\mathbf v\approx b_1^2\cdots b_r^2\mathbf v_{\mathscr B}$.
Hence $\mathbb O\{\mathbf u\approx \mathbf v\}=\mathbb O\{\mathbf u_{\mathscr B}\approx \mathbf v_{\mathscr B}\}$.
The identity $\mathbf u_{\mathscr B}\approx \mathbf v_{\mathscr B}$ is linear-balanced and every variable occurs in $\mathbf u_{\mathscr B}$ and $\mathbf v_{\mathscr B}$ at most twice.
Further, the fourth identity in~\eqref{eq:five identities} allows us to swap the first and the second occurrences of two multiple variables whenever these occurrences are adjacent to each other. 
In view of this fact, the variety $\mathbb O$ satisfies the identities $\mathbf u_{\mathscr B}\approx\mathbf w_1$ and $\mathbf v_{\mathscr B}\approx\mathbf w_2$ for some words $\mathbf w_1$ and $\mathbf w_2$ such that each block of $\mathbf w_1$ or $\mathbf w_2$ is a product of two words consisting of the first and the second occurrences of variables, respectively.
This means that $\mathbf w_i=\mathbf a_0^{(i)}\mathbf c_0^{(i)}t_1\mathbf a_1^{(i)}\mathbf c_1^{(i)}\cdots t_k\mathbf a_k^{(i)}\mathbf c_k^{(i)}$, where $\mathsf{sim}(\mathbf w_i)=\{t_1,t_2,\dots,t_k\}$ and, for any $j=0,1,\dots,k$, the word $\mathbf a_j^{(i)}$ [respectively, $\mathbf c_j^{(i)}$] consists of the first [second] occurrences of variables in $\mathbf w_i$.
Clearly, $\mathbf c_j^{(i)}$ can be represented as a product of some islands $\mathbf c_{j1}^{(i)}, \mathbf c_{j2}^{(i)},\dots, \mathbf c_{jr_j^{(i)}}^{(i)}$ of $\mathbf w_i$.
Since $\mathbf w_1\approx \mathbf w_2$ holds in $\mathbb M(xzytxy)$, it is easy to deduce from Lemma~\ref{lem:M(W)} that $r_j:=r_j^{(1)}=r_j^{(2)}$ and $\mathsf{con}(\mathbf c_{j\ell}^{(1)})=\mathsf{con}(\mathbf c_{j\ell}^{(2)})$ for any $j=0,1,\dots,k$ and $\ell=1,2,\dots,r_j$.
Now Lemma~\ref{lem:swapping} applies, yielding that $\mathbb O$ satisfies $\mathbf w_1\approx \mathbf w_1^\prime$, where $\mathbf w_1^\prime:=\mathbf a_0^{(1)}\mathbf c_0^{(2)}t_1\mathbf a_1^{(1)}\mathbf c_1^{(2)}\cdots t_k\mathbf a_k^{(1)}\mathbf c_k^{(2)}$.
Clearly, the identity $\mathbf w_1^\prime\approx \mathbf w_2$ is reduced and $\mathbb O\{\mathbf u\approx \mathbf v\}=\mathbb O\{\mathbf w_1^\prime\approx \mathbf w_2\}$ as required.
\end{proof}

We call an identity $\mathbf w\approx\mathbf w^\prime$ 1-\textit{invertible} if $\mathbf w=\mathbf a\, xy\,\mathbf b$ and $\mathbf w^\prime=\mathbf a\, yx\,\mathbf b$ for some $\mathbf a,\mathbf b\in\mathscr X^\ast$ and $x,y\in\mathsf{con}(\mathbf a\mathbf b)$. 
Let $k>1$. 
An identity $\mathbf w\approx\mathbf w^\prime$ is called $k$-\textit{invertible} if there is a sequence of words $\mathbf w=\mathbf w_0,\mathbf w_1,\dots,\mathbf w_k=\mathbf w^\prime$ such that the identity $\mathbf w_i\approx\mathbf w_{i+1}$ is 1-invertible for each $i=0,1,\dots,k-1$ and $k$ is the least number with such a property. 
For convenience, we will call the trivial identity 0-\textit{invertible}.

\smallskip

For the rest of this section, the mapping $\Phi\colon \mathfrak{Eq}(\mathscr W_n) \to [\mathbb M(\mathscr W_n)\{\mathsf{Id}(\upsilon_{\mathscr W_n})\},\mathbb M(\mathscr W_n)]$ given by 
\[
\Phi(\pi) := \mathbb M(\mathscr W_n)\{\mathsf{Id}(\pi)\}
\] 
is shown to be an anti-isomorphism.
The proof of Proposition~\ref{prop:mapping} is thus complete.

\subsection*{The mapping $\Phi$ is injective}

Suppose that $\Phi(\pi) = \Phi(\rho)$ for some $\pi,\rho \in\mathfrak{Eq}(\mathscr W_n)$, so that $\mathbb M(\mathscr W_n)\{\mathsf{Id}(\pi)\} = \mathbb M(\mathscr W_n)\{\mathsf{Id}(\rho)\}$.
If $(\mathbf u,\mathbf v) \in \rho$, then the variety $\mathbb M(\mathscr W_n)\{\mathsf{Id}(\pi)\}$ satisfies the identity $\mathbf u \approx \mathbf v$, whence $(\mathbf u,\mathbf v) \in \pi$ by Corollary~\ref{cor:FIC(M(W_n)Id(pi))-class}.
Therefore the inclusion $\rho \subseteq \pi$ holds; the reverse inclusion $\rho \supseteq \pi$ holds by a symmetrical argument, thus $\pi = \rho$.

\subsection*{The mapping $\Phi$ is surjective}

It suffices to show that for any variety $\mathbb V$ from the interval $[\mathbb M(\mathscr W_n)\{\mathsf{Id}(\upsilon_{\mathscr W_n})\},\mathbb M(\mathscr W_n)]$, there exists some $\pi \in \mathfrak{Eq}(\mathscr W_n)$ such that $\Phi(\pi) = \mathbb V$.
Since $\Phi(\varepsilon_{\mathscr W_n}) = \mathbb M(\mathscr W_n)\{ \mathsf{Id}(\varepsilon_{\mathscr W_n}) \} = \mathbb M(\mathscr W_n)$, where $\varepsilon_{\mathscr W_n}$ is the equality relation on $\mathscr W_n$, suppose that $\mathbb V \neq \mathbb M(\mathscr W_n)$.
Then there exists a nontrivial set~$\Sigma$ of identities such that $\mathbb V = \mathbb M(\mathscr W_n)\{\Sigma\}$; by the inclusions $\mathbb M(xzytxy)\subseteq\mathbb M(\mathscr W_n)\{\mathsf{Id}(\upsilon_{\mathscr W_n})\}\subseteq\mathbb M(\mathscr W_n)\subseteq\mathbb O$ and Lemma~\ref{lem:O}, the identities in~$\Sigma$ can be chosen to be reduced.
It is shown below that any identity $\mathbf u \approx \mathbf v$ in~$\Sigma$ is equivalent within $\mathbb M(\mathscr W_n)$ to a subset of~$\mathsf{Id}(\upsilon_{\mathscr W_n})$.
By Lemma~2.2 in~\cite{Gusev-Lee-20}, there exists some $\pi \in \mathfrak{Eq}(\mathscr W_n)$ such that $\mathbb V= \mathbb M(\mathscr W_n)\{\mathsf{Id}(\pi)\}$, so that $\Phi(\pi) = \mathbb V$ as required. 

Since the identity $\mathbf u \approx \mathbf v$ is reduced (and so linear-balanced), this identity is $r$-invertible for some $r\ge0$. 
We will use induction by $r$.

\smallskip

\textbf{Induction base}: $r=0$.
Then $\mathbf u=\mathbf v$, whence $\mathbb M(\mathscr W_n)\{\mathbf u\approx\mathbf v\}=\mathbb M(\mathscr W_n)\{\emptyset\}$.

\smallskip

\textbf{Induction step}: $r>0$.
If $\mathbf u\approx\mathbf v$ holds in $\mathbb M(\mathscr W_n)$, then $\mathbb M(\mathscr W_n)\{\mathbf u\approx\mathbf v\}=\mathbb M(\mathscr W_n)\{\emptyset\}$.
So, we may further assume that $\mathbf u \approx \mathbf v$ is violated by $\mathbb M(\mathscr W_n)$.
Then there is a substitution $\psi\colon \mathscr X \to \mathbb M(\mathscr W_n)$ such that $\psi(\mathbf u)\ne \psi(\mathbf v)$ in $\mathbb M(\mathscr W_n)$.
This is only possible when $\psi(\mathbf u)$ or $\psi(\mathbf v)$, say $\psi(\mathbf u)$, is a non-empty factor of some word $\mathbf w_\xi$ in $\mathscr W_n$. 
According to Lemma~\ref{lem:three-isotrems}(iii), every proper factor of $\mathbf w_\xi$ is an isoterm for $\mathbb M(\mathscr W_n)\{\mathsf{Id}(\upsilon_{\mathscr W_n})\}$.
Hence $\mathbf w_\xi = \psi(\mathbf u)$.
Clearly, $\psi(\mathbf v)$ represents a non-empty word, which does not equal to $\psi(\mathbf u)$.
In view of Lemma~\ref{lem:FIC(M(W_n)Id(U_n))-class}, $\psi(\mathbf v)=\mathbf w_\eta$ for some $\eta\in S_2^n\setminus\{\xi\}$.
Let $\mathscr V:=\{x\in \mathscr X\mid \psi(x)\ne1\}$.
Clearly, $\mathbf w_\xi= \psi(\mathbf u)=\psi(\mathbf u(\mathscr V))$ and $\mathbf w_\eta= \psi(\mathbf v)=\psi(\mathbf v(\mathscr V))$.

Notice that every factor of length $>1$ of $\mathbf w_\xi$ has exactly one occurrence in $\mathbf w_\xi$.
It follows that $\psi(v)$ is a variable for any $v\in\mathscr V\cap\mathsf{mul}(\mathbf u)$.
Let us now consider an arbitrary variable $c\in \mathscr V\cap\mathsf{sim}(\mathbf u)$.
If $\psi(c)$ is not a variable, then $\psi_c(\mathbf u(\mathscr V))$ is obtained from $\mathbf w_\xi$ by replacing some factor of length $>1$ with the variable $c$, where $\psi_c\colon \mathscr X\to \mathscr X^\ast$ is the substitution defined by
\[
\psi_c(v):=
\begin{cases}
\psi(v)&\text{if }v\ne c,\\
c&\text{if }v= c.
\end{cases}
\]
According to Lemma~\ref{lem:three-isotrems}(ii), the word $\psi_c(\mathbf u(\mathscr V))$ is an isoterm for $\mathbb M(\mathscr W_n)\{\mathsf{Id}(\upsilon_{\mathscr W_n})\}$ contradicting the fact that $\psi(\mathbf u(\mathscr V))\approx \psi(\mathbf v(\mathscr V))$ is a nontrivial identity of $\mathbb M(\mathscr W_n)\{\mathsf{Id}(\upsilon_{\mathscr W_n})\}$.
Therefore, $\psi(c)$ is a variable.
Further, if $\psi(c)\in \mathsf{mul}(\mathbf w_\xi)$, then $\psi_c(\mathbf u(\mathscr V))$ is obtained from $\mathbf w_\xi$ by replacing some occurrence of the multiple variable $\psi(c)$ with the variable $c$.
In view of Lemma~\ref{lem:three-isotrems}(i), the word $\psi_c(\mathbf u(\mathscr V))$ is an isoterm for $\mathbb M(\mathscr W_n)\{\mathsf{Id}(\upsilon_{\mathscr W_n})\}$ contradicting the fact that $\psi(\mathbf u(\mathscr V))\approx \psi(\mathbf v(\mathscr V))$ is a nontrivial identity of $\mathbb M(\mathscr W_n)\{\mathsf{Id}(\upsilon_{\mathscr W_n})\}$ again.
Therefore, $\psi(c)\in\mathsf{sim}(\mathbf w_\xi)$.
Since the identity $\mathbf w_\xi\approx \mathbf w_\eta$ is reduced, $\psi(\mathbf v_1)$ and $\psi(\mathbf v_2)$ cannot coincide with each other for distinct $\mathbf v_1,\mathbf v_2\in\mathscr V$.
Therefore, $\mathbf u(\mathscr V)$ and $\mathbf v(\mathscr V)$ coincide (up to renaming of variables) with $\mathbf w_\xi$ and $\mathbf w_\eta$, respectively.
We may assume without any loss that $\mathbf u(\mathscr V)=\mathbf w_\xi$ and $\mathbf v(\mathscr V)=\mathbf w_\eta$.

Let $\mathscr V^\prime:=\mathscr V\cap\mathsf{mul}(\mathbf u)$. 
For any $c\in \mathscr V^\prime$, let $\check{\mathbf c}$ denote the island of $\mathbf u$ containing ${_{2\mathbf u}c}$. 
Consider arbitrary $x,y\in\mathscr V^\prime$ such that ${_{2\mathbf w_\xi}x}\, {_{2\mathbf w_\xi}y}$ is a factor of $\mathbf w_\xi$.
Clearly, ${_{1\mathbf u}x}$ and ${_{1\mathbf u}y}$ lie in different blocks of $\mathbf u$, whence $\mathsf{con}(\check{\mathbf x})\cap\mathsf{con}(\check{\mathbf y})=\emptyset$.
Denote by $\mathbf c$ the factor of $\mathbf u$ lying between the factors $\check{\mathbf x}$ and $\check{\mathbf y}$.
Assume that $\mathbf c$ is non-empty. 
Corollary~\ref{cor:-ix-1h-iy} together with the fact that $\mathbf u(\mathscr V)=\mathbf w_\xi$ and $\mathbf v(\mathscr V)=\mathbf w_\eta$ imply that $\mathsf{con}(\mathbf c)\subseteq\mathsf{mul}(\mathbf u)$.
Since the identity $\mathbf u\approx \mathbf v$ is reduced, this implies that $\mathbf c$ consists of the second occurrences of variables in $\mathbf u$.
Let $c_1$ denote the first variable of $\mathbf c$.
The variables ${_{1\mathbf u}c_1}$ and ${_{1\mathbf u}x}$ do not lie in the same block  of $\mathbf u$ because ${_{1\mathbf u}c_1}$ belongs to the island $\check{\mathbf x}$ otherwise.
Therefore, there is $h\in\mathsf{sim}(\mathbf u)$ such that ${_{1\mathbf u}h}$ lies between ${_{1\mathbf u}x}$ and ${_{1\mathbf u}c_1}$ in $\mathbf u$.
If the variables ${_{1\mathbf u_1}c_1}$ and ${_{1\mathbf u_1}x}$ lie in the same block of $\mathbf u_1:=\mathbf u(\mathscr V\cup\{c_1\})$, then taking into account Corollary~\ref{cor:-ix-1h-iy} and the fact that $\mathbf u(\mathscr V)=\mathbf w_\xi$ and $\mathbf v(\mathscr V)=\mathbf w_\eta$, we conclude that $h\notin \mathscr V$ and, in the word $\mathbf u_2:=\mathbf u(\mathscr V\cup\{h,c_1\})$, the variable ${_{1\mathbf u_2}h}$ does not lie between ${_{1\mathbf u_2}a_1}$ and ${_{1\mathbf u_2}b_n}$.
Hence ${_{1\mathbf u_2}c_1}$ forms a block of $\mathbf u_2$ contradicting Lemma~\ref{lem:u_{c,h}}.
Therefore, ${_{1\mathbf u_1}c_1}$ and ${_{1\mathbf u_1}x}$ lie in different blocks of $\mathbf u_1$.
Then ${_{1\mathbf u_1}y}$ and ${_{1\mathbf u_1}c_1}$ lie in the same block of $\mathbf u_1$ by Lemma~\ref{lem:2x-2c-2y}.
By similar arguments we can show that if $c_2$ is the last variable of $\mathbf c$, then ${_{1\mathbf u_3}x}$ and ${_{1\mathbf u_3}c_2}$ lie in the same block of $\mathbf u_3:=\mathbf u(\mathscr V\cup\{c_2\})$ (and so $c_1\ne c_2$). 
This implies that the word $\mathbf u_4:=\mathbf u(\mathscr V\cup\{c_1,c_2\})$ contains the factor ${_{2\mathbf u_4}x}\,{_{2\mathbf u_4}c}_1\,{_{2\mathbf u_4}c}_2\,{_{2\mathbf u_4}y}$, while ${_{1\mathbf u_4}c_1}$ and ${_{1\mathbf u_4}c_2}$ lie in the same blocks as ${_{1\mathbf u_4}y}$ and ${_{1\mathbf u_4}x}$ in $\mathbf u_4$, respectively.
This contradicts Lemma~\ref{lem:2x-2c1-2c2-2y} because $\mathbf v(\mathscr V)=\mathbf w_\xi$ and $\mathbf v(\mathscr V)=\mathbf w_\eta$.
Therefore, the word $\mathbf c$ must be empty.
Since the variables $x$ and $y$ are arbitrary, we have proved that the word
\[
\mathbf r:=\check{\mathbf b}\,\check{\mathbf  y}_0\,\biggl(\prod_{i=1}^n \check{\mathbf x}_1^{(i)}\,\check{\mathbf z}_i \check{\mathbf a}_i\check{\mathbf z}_i^{\prime}\check{\mathbf b}_i\check{\mathbf z}_i^{\prime\prime} \,\check{\mathbf x}_2^{(i)}\, \check{\mathbf  y}_i\biggr)\,\check{\mathbf a}
\]
forms a factor of $\mathbf u$.

Further, for any $c\in \mathscr V^\prime\setminus\{a,b\}$, let $\hat{\mathbf c}$ denote the minimal factor of $\mathbf u$ containing all first occurrences of variables in $\mathsf{con}(\check{\mathbf c})$. 
Consider an arbitrary variable $x\in\mathscr V^\prime\setminus\{a,b\}$.
Let $d$ denote the last variable of $\hat{\mathbf x}$.
By the definition of $\hat{\mathbf x}$, we have $d\in\mathsf{con}(\check{\mathbf x})$.
Consider an arbitrary variable $e\in\mathsf{con}(\hat{\mathbf x})\setminus\mathsf{con}(\check{\mathbf x})$ such that some occurrence of $e$ lies between ${_{1\mathbf u}}x$ and ${_{1\mathbf u}}d$ in $\mathbf u$. 
Since the identity $\mathbf u\approx \mathbf v$ is reduced, this occurrence of $e$ must be the first one in $\mathbf u$.
By the definition of the island $\check{\mathbf x}$, the variable $e$ is multiple in $\mathbf u$.
Denote by $y$ the variable different from $x$ that is adjacent to the second occurrence  of $d$ in the word $\mathbf u_5:=\mathbf u(\mathscr V\cup\{d\})$.
Then, since $x\in \mathscr V^\prime\setminus\{a,b\}$ and the word $\mathbf u$ is block-linear, we have $y\in\mathscr V^\prime$ and either ${_{2\mathbf u_5}}x\,{_{2\mathbf u_5}}d\,{_{2\mathbf u_5}}y$ or ${_{2\mathbf u_5}}y\,{_{2\mathbf u_5}}d\,{_{2\mathbf u_5}}x$ is a factor of $\mathbf u_5$.
Further, since ${_{1\mathbf u_5}}x$ and ${_{1\mathbf u_5}}y$ lie in different blocks of $\mathbf u_5$, the variable ${_{1\mathbf u_5}}d$ is not adjacent to the variable ${_{1\mathbf u_5}}y$ in $\mathbf u_5$ as well. 
If $e\in \mathscr V$, then ${_{1\mathbf u_5}}d$ is not adjacent to ${_{1\mathbf u_5}}x$ in $\mathbf u_5$ because $({_{1\mathbf u}}x)<({_{1\mathbf u}}e)<({_{1\mathbf u}}d)$.
Then $\mathbf v(\mathscr V)=\mathbf w_\xi$ by Lemma~\ref{lem:2x-2c-2y}(i) contradicting $\mathbf v(\mathscr V)=\mathbf w_\eta$.
Therefore, $e\notin \mathscr V$.
Since the variable $e$ is arbitrary, we have proved that there are no variables in $\mathscr V$ lying between ${_{1\mathbf u}}x$ and ${_{1\mathbf u}}d$ in $\mathbf u$.

Suppose that $x=b_n$. 
In this case, ${_{2\mathbf u}}d$ lies between ${_{2\mathbf u}}z_n^\prime$ and ${_{2\mathbf u}}z_n^{\prime\prime}$ in $\mathbf u$, while ${_{2\mathbf u}}e$ does not.
It follows from the fact that $\mathbf u(\mathscr V)=\mathbf w_\xi$ and $\mathbf v(\mathscr V)=\mathbf w_\eta$ and Lemma~\ref{lem:2x-2c-2y}(i) that either $({_{2\mathbf u}}e)<({_{2\mathbf u}}y_0)$ or $({_{2\mathbf u}}y_n)<({_{2\mathbf u}}e)$.
It is easy to see that the word $\mathbf u_6:=\mathbf u((\mathscr V\setminus\{x\})\cup\{d\})$ coincides (up to renaming variables) with $\mathbf u(\mathscr V)=\mathbf w_\xi$.
Since the word $xzytxy$ and so the word $xzytyx$ are isoterms for $\mathbb M(\mathscr W_n)\{\mathsf{Id}(\upsilon_{\mathscr W_n})\}$ by Lemma~\ref{lem:isotrems-for-M(W_n)Id(U_n)}, we have $({_{2\mathbf v}}z_n^\prime)<({_{2\mathbf v}}d)<({_{2\mathbf v}}z_n^{\prime\prime})$.
Further, since $\mathbf u(b_{n-1},d,s_{n-1},t,y_{n-1})=b_{n-1}ds_{n-1}y_{n-1}tb_{n-1}y_{n-1}d$, we can apply Lemma~\ref{lem:isotrems-for-M(W_n)Id(U_n)} again, yielding that $\mathbf v(b_{n-1},d,s_{n-1},t,y_{n-1})=b_{n-1}ds_{n-1}y_{n-1}tb_{n-1}y_{n-1}d$.
Hence $({_{1\mathbf v}}b_{n-1})<({_{1\mathbf v}}d)$.
It follows that the word $\mathbf v((\mathscr V\setminus\{x\})\cup\{d\})$ coincides (up to renaming variables) with $\mathbf v(\mathscr V)=\mathbf w_\eta$.
However, since ${_{1\mathbf u_7}}b_{n-1}\,{_{1\mathbf u_7}}e\,{_{1\mathbf u_7}}d$ is a factor of $\mathbf u_7:=\mathbf u((\mathscr V\setminus\{x\})\cup\{d,e\})$ and either $({_{2\mathbf u_7}}e)<({_{2\mathbf u_7}}y_0)$ or $({_{2\mathbf u_7}}y_n)<({_{2\mathbf u_7}}e)$, Lemma~\ref{lem:1x-1c-1y} implies that $\mathbf v((\mathscr V\setminus\{x\})\cup\{d\})$ must coincide (up to renaming variables) with $\mathbf w_\xi$, a contradiction.

Suppose now that $x\ne b_n$.
Denote by $z$ the variable that directly follows ${_{1\mathbf w_\xi}}x$ in $\mathbf w_\xi$.
In view of the above, $({_{1\mathbf u}}d)<({_{1\mathbf u}}z)$, whence ${_{1\mathbf u_8}}x\,{_{1\mathbf u_8}}e\,{_{1\mathbf u_8}}z$ is a factor of the word $\mathbf u_8:=\mathbf u(\mathscr V\cup\{e\})$.
Clearly, ${_{2\mathbf u_8}}e$ is not adjacent to ${_{2\mathbf u_8}}x$ in $\mathbf u_8$.
Then Lemma~\ref{lem:1x-1c-1y} and the fact that $\mathbf u(\mathscr V)=\mathbf w_\xi$ and $\mathbf v(\mathscr V)=\mathbf w_\eta$ imply that ${_{2\mathbf u_8}}e$ is adjacent to ${_{2\mathbf u_8}}z$ in $\mathbf u_8$ contradicting Lemma~\ref{lem:1x-1c1-1c2-1y}.

Thus, we have proved that if $e$ is a variable lying between ${_{1\mathbf u}}x$ and ${_{1\mathbf u}}d$ in $\mathbf u$, then $e\in \mathsf{con}(\check{\mathbf x})$.
By similar arguments we can show that if $d^\prime$ is the first variable of $\hat{\mathbf x}$, then every variable lying between ${_{1\mathbf u}}d^\prime$ and ${_{1\mathbf u}}x$ in $\mathbf u$ must belong to $\mathsf{con}(\check{\mathbf x})$.
Therefore, we have proved that $\mathsf{con}(\hat{\mathbf c})=\mathsf{con}(\check{\mathbf c})$ for any $c\in\mathscr V^\prime\setminus\{a,b\}$.

Now consider arbitrary $x,y\in\mathscr V^\prime\setminus\{a,b\}$ such that ${_{1\mathbf w_\xi}x}\, {_{1\mathbf w_\xi}y}$ is a factor of $\mathbf w_\xi$.
Denote by $\mathbf c$ the factor of $\mathbf u$ lying between the factors $\hat{\mathbf x}$ and $\hat{\mathbf y}$.
Assume that $\mathbf c$ is non-empty. 
Then we can take $c\in\mathsf{con}(\mathbf c)$.
Corollary~\ref{cor:-ix-1h-iy} and the fact that $\mathbf u(\mathscr V)=\mathbf w_\xi$ and $\mathbf v(\mathscr V)=\mathbf w_\eta$ imply that $c\in\mathsf{mul}(\mathbf u)$.
Clearly, the variable ${_{2\mathbf u}c}$ does not occur in the islands $\check{\mathbf x}$ and $\check{\mathbf y}$ of $\mathbf u$.
It follows that the second occurrence of $c$ is not adjacent to the second occurrences of $x$ and $y$ in $\mathbf u(\mathscr V\cup\{c\})$ contradicting Lemma~\ref{lem:1x-1c-1y}.
Therefore, the word $\mathbf c$ must be empty.
Since the variables $x$ and $y$ are arbitrary, we have proved that the word 
\[
\mathbf h:=\biggl(\prod_{i=1}^n\hat{\mathbf a}_i\biggr)\,\hat{\mathbf a}\, \biggl(\prod_{i=1}^n\hat{\mathbf x}_{1\xi_i}^{(i)}\hat{\mathbf x}_{2\xi_i}^{(i)}\biggr)\,\hat{\mathbf b}\,\biggl(\prod_{i=1}^n\hat{\mathbf b}_i\biggr)
\]
forms a factor of $\mathbf u$, where $\hat{\mathbf a}$ [respectively, $\hat{\mathbf b}$] denote the factor of $\mathbf u$ lying between the factors $\hat{\mathbf a}_n$ and $\hat{\mathbf x}_{1\xi_1}^{(1)}$  [respectively, $\hat{\mathbf x}_{2\xi_n}^{(2)}$ and $\hat{\mathbf b}_1$]. 
Evidently, $a\in\mathsf{con}(\hat{\mathbf a})$ and $b\in\mathsf{con}(\hat{\mathbf b})$.

Consider an arbitrary variable $c\in\mathsf{con}(\hat{\mathbf a})\setminus \mathsf{con}(\check{\mathbf a})$.
It follows from Corollary~\ref{cor:-ix-1h-iy} that $c\in\mathsf{mul}(\mathbf u)$.
Further, since $\mathsf{con}(\hat{\mathbf a}_n)=\mathsf{con}(\check{\mathbf a}_n)$ and $\mathsf{con}(\hat{\mathbf x}_{1\xi_1}^{(1)})=\mathsf{con}(\check{\mathbf x}_{1\xi_1}^{(1)})$, Lemma~\ref{lem:1x-1c-1y} implies that the second occurrence of $c$ is adjacent to the second occurrence of $a$ in $\mathbf u(\mathscr V\cup\{c\})$.
By the definition of the island $\check{\mathbf a}$, either ${_{2\mathbf u}c}$ and ${_{2\mathbf u}a}$ lie in different blocks of $\mathbf u$ or ${_{2\mathbf u}c}$ and ${_{2\mathbf u}a}$ lie in the same block of $\mathbf u$ but in different islands of this block.
If ${_{2\mathbf u}c}$ and ${_{2\mathbf u}a}$ lie in different blocks of $\mathbf u$, then there is $h\in\mathsf{sim}(\mathbf u)$ such that $({_{2\mathbf u}a})<({_{1\mathbf u}h})<({_{2\mathbf u}c})$ contradicting Lemma~\ref{lem:u_{c,h}} because the second occurrence of $c$ in the word $\mathbf u(\mathscr V\cup\{c,h\})$ forms a block in this word.
If ${_{2\mathbf u}c}$ and ${_{2\mathbf u}a}$ lie in the same block of $\mathbf u$ but in different islands of this block, then there is $c_1\in \mathsf{mul}(\mathbf u)$ such that $({_{2\mathbf u}a})<({_{2\mathbf u}c_1})<({_{2\mathbf u}c})$ and ${_{1\mathbf u}c_1}$ do not lie in the block of $\mathbf u$ containing ${_{1\mathbf u}a}$ and ${_{1\mathbf u}c}$.
Since the identity $\mathbf u((\mathscr V\setminus\{a\})\cup\{c\})\approx \mathbf v((\mathscr V\setminus\{a\})\cup\{c\})$ coincides (up to renaming variables) with $\mathbf w_\xi\approx \mathbf w_\eta$ and ${_{2\mathbf u_9}y_n}\,{_{2\mathbf u_9}c_1}\,{_{2\mathbf u_9}c}$ is a factor of the word $\mathbf u_9:=\mathbf u((\mathscr V\setminus\{a\})\cup\{c,c_1\})$, Lemma~\ref{lem:2x-2c-2y}(iii) implies that ${_{1\mathbf u_9}c_1}$ is adjacent to ${_{1\mathbf u_9}y_n}$ in $\mathbf u_9$ contradicting Lemma~\ref{lem:2x-2c1-2c2-2y}.
Therefore, $\mathsf{con}(\hat{\mathbf a})\subseteq \mathsf{con}(\check{\mathbf a})$.
By similar arguments we can show that $\mathsf{con}(\hat{\mathbf b})\subseteq \mathsf{con}(\check{\mathbf b})$.

In view of the above, there are words $\mathbf t$, $\mathbf t_i$, $\mathbf t_i^\prime$, $\mathbf t_i^{\prime\prime}$ and $\mathbf s_i$ such that the word $\mathbf p\mathbf h\mathbf q\mathbf r$ is a factor of $\mathbf u$, where
\[
\mathbf p:=\biggl(\prod_{i=1}^n \hat{\mathbf z}_i\mathbf t_i\biggr)\biggl(\prod_{i=1}^n\hat{\mathbf z}_i^{\prime}\mathbf t_i^{\prime}\biggr)\biggl(\prod_{i=1}^n\hat{\mathbf z}_i^{\prime\prime}\mathbf t_i^{\prime\prime}\biggr),\ \  
\mathbf q:=\biggl(\prod_{i=0}^n \mathbf s_i\hat{\mathbf y}_i\biggr)\mathbf t.
\]
By similar arguments one can show that the word $\mathbf v$ contains a factor
\[
\bar{\mathbf p}\cdot \biggl(\prod_{i=1}^n \hat{\bar{\mathbf a}}_i\biggr)\, \biggl(\prod_{i=1}^n \hat{\bar{\mathbf x}}_{1\eta_i}^{(i)} \hat{\bar{\mathbf x}}_{2\eta_i}^{(i)}\biggr)\,\biggl(\prod_{i=1}^n \hat{\bar{\mathbf b}}_i\biggr) \cdot\bar{\mathbf q}\bar{\mathbf r}
\]
with 
\[
\bar{\mathbf p}:=\biggl(\prod_{i=1}^n \hat{\bar{\mathbf z}}_i\bar{\mathbf t}_i\biggr)\biggl(\prod_{i=1}^n\hat{\bar{\mathbf z}}_i^{\prime}\bar{\mathbf t}_i^{\prime}\biggr)\biggl(\prod_{i=1}^n\hat{\bar{\mathbf z}}_i^{\prime\prime}\bar{\mathbf t}_i^{\prime\prime}\biggr),
\bar{\mathbf q}:=\biggl(\prod_{i=0}^n \bar{\mathbf s}_i\hat{\bar{\mathbf y}}_i\biggr)\bar{\mathbf t},\,
\bar{\mathbf r}:=\check{\bar{\mathbf b}}\check{\bar{\mathbf y}}_0\biggl(\prod_{i=1}^n \check{\bar{\mathbf x}}_1^{(i)}\,\check{\bar{\mathbf z}}_i \check{\bar{\mathbf a}}_i\check{\bar{\mathbf z}}_i^{\prime}\check{\bar{\mathbf b}}_i\check{\bar{\mathbf z}}_i^{\prime\prime} \,\check{\bar{\mathbf x}}_2^{(i)}\, \check{\bar{\mathbf y}}_i\biggr)\check{\bar{\mathbf a}}
\]
such that 
\begin{itemize}
\item for any $c\in\mathscr V^\prime$, the word $\check{\bar{\mathbf c}}$ is the island of $\mathbf v$ containing ${_{2\mathbf v}c}$;
\item $\mathsf{con}(\hat{\bar{\mathbf c}})=\mathsf{con}(\check{\bar{\mathbf c}})$ for any $c\in\mathscr V^\prime\setminus\{a,b\}$;
\item $c\in\mathsf{con}(\hat{\bar{\mathbf c}})\subseteq\mathsf{con}(\check{\bar{\mathbf c}})$ and for any $c\in\{a,b\}$.
\end{itemize}
Since the identity $\mathbf u\approx \mathbf v$ is reduced, $\mathsf{con}(\check{\mathbf c})=\mathsf{con}(\check{\bar{\mathbf c}})$ for any $c\in\mathscr V^\prime$.
Hence $\mathsf{con}(\hat{\mathbf c})=\mathsf{con}(\hat{\bar{\mathbf c}})$ for any $c\in\mathscr V^\prime\setminus\{a,b\}$.
Further, one can deduce from Lemma~\ref{lem:FIC(M(W_n)Id(U_n))-class} that $\mathsf{con}(\hat{\mathbf c})=\mathsf{con}(\hat{\bar{\mathbf c}})$ for any $c\in\{a,b\}$.

Clearly, $\mathbf u=\mathbf a\cdot\mathbf p\mathbf h\mathbf q\mathbf r\cdot\mathbf b$ for some $\mathbf a,\mathbf b\in\mathscr X^\ast$.
Define $\mathbf w :=\mathbf a\cdot \mathbf p\tilde{\mathbf h}\mathbf q\mathbf r\cdot \mathbf b$, where
\[
\tilde{\mathbf h}: =\biggl(\prod_{i=1}^n \hat{\mathbf a}_i\biggr)\,\hat{\mathbf a}\, \biggl(\prod_{i=1}^n \hat{\mathbf x}_{1\eta_i}^{(i)} \hat{\mathbf x}_{2\eta_i}^{(i)}\biggr)\,\hat{\mathbf b}\,\biggl(\prod_{i=1}^n \hat{\mathbf b}_i\biggr).
\]
Since $\mathsf{con}(\hat{\bar{\mathbf c}})=\mathsf{con}(\hat{\mathbf c})$ for any $c\in \mathscr V^\prime$ and the identity $\mathbf u\approx \mathbf v$ is reduced, the identity $\mathbf w\approx \mathbf v$ is $(r-r^\prime)$-invertible with
\[
r^\prime=\sum_{i\in\{j\mid \xi_j\ne\eta_j\}}|\hat{\mathbf x}_{1\xi_i}^{(i)}|\cdot|\hat{\mathbf x}_{2\xi_i}^{(i)}|>0.
\]
Clearly, the identity $\mathbf w\approx \mathbf v$ is reduced.
So, we can apply the induction assumption,  yielding that $\mathbb M(\mathscr W_n)\{\mathbf w\approx\mathbf v\}=\mathbb M(\mathscr W_n)\{\Psi\}$ for some $\Psi\subseteq \mathsf{Id}(\upsilon_{\mathscr W_n})$.
Further, $\mathbb M(\mathscr W_n)\{\mathbf w_\xi\approx\mathbf w_\eta\}$ satisfies the identities
\begin{align*}
\mathbf u&=\mathbf a\cdot\mathbf p\mathbf h\mathbf q\mathbf r\cdot\mathbf b\\
&\approx \mathbf a\cdot\mathbf p\mathbf h\mathbf q\tilde{\mathbf r}\cdot\mathbf b&&\text{by Lemma~\ref{lem:swapping}}\\
&\approx \mathbf a\cdot\mathbf p\tilde{\mathbf h}\mathbf q\tilde{\mathbf r}\cdot\mathbf b&&\text{by $\mathbf w_\xi\approx\mathbf w_\eta$}\\
&\approx \mathbf a\cdot\mathbf p\tilde{\mathbf h}\mathbf q\mathbf r\cdot\mathbf b&&\text{by Lemma~\ref{lem:swapping}},\\
&=\mathbf w,
\end{align*}
where
\[
\tilde{\mathbf r} :=(\check{\mathbf b})_{\mathsf{con}(\hat{\mathbf b})}\hat{\mathbf b}\hat{\mathbf  y}_0\,\biggl(\prod_{i=1}^n \hat{\mathbf x}_1^{(i)}\,\hat{\mathbf z}_i \hat{\mathbf a}_i\hat{\mathbf z}_i^{\prime}\hat{\mathbf b}_i\hat{\mathbf z}_i^{\prime\prime} \,\hat{\mathbf x}_2^{(i)}\, \hat{\mathbf  y}_i\biggr)\,\hat{\mathbf a}(\check{\mathbf a})_{\mathsf{con}(\hat{\mathbf a})}.
\]
Since $\mathbf w_\xi\approx\mathbf w_\eta$ is a consequence of $\mathbf u\approx \mathbf v$, this implies that
\[
\mathbb M(\mathscr W_n)\{\mathbf u\approx\mathbf v\}= \mathbb M(\mathscr W_n)\{\mathbf u\approx\mathbf w\approx\mathbf v\}=\mathbb M(\mathscr W_n)\{\mathbf w_\xi\approx\mathbf w_\eta,\,\Psi\}.
\]

\subsection*{The mapping $\Phi$ is an anti-isomorphism}

Let $\pi,\rho \in  \mathfrak{Eq}(\mathscr W_n)$.
If $\pi \subseteq \rho$, then the inclusion $\mathbb M(\mathscr W_n)\{\mathsf{Id}(\rho)\} \subseteq \mathbb M(\mathscr W_n)\{\mathsf{Id}(\pi)\}$ holds, so that $\Phi(\rho) \subseteq \Phi(\pi)$.
Conversely, assume the inclusion $\Phi(\rho) \subseteq \Phi(\pi)$, so that $\mathbb M(\mathscr W_n)\{\mathsf{Id}(\rho)\} \subseteq \mathbb M(\mathscr W_n)\{\mathsf{Id}(\pi)\}$.
Then for any $(\mathbf u,\mathbf v) \in \pi$, the identity $\mathbf u \approx \mathbf v$ is satisfied by $\mathbb M(\mathscr W_n)\{\mathsf{Id}(\rho)\}$, whence $(\mathbf u,\mathbf v) \in \rho$ by Corollary~\ref{cor:FIC(M(W_n)Id(pi))-class}.
Therefore $\pi \subseteq \rho$.\qed

\section{Some open problems} 
\label{sec:problems}

\subsection{Monoids of order at least six}

In the present article, we show that the 6-element monoids $A_2^1$ and $B_2^1$ generate finitely universal varieties.
We do not know any of 6-element monoids distinct from $A_2^1$ and $B_2^1$ generating varieties with this property.
Thus, the following question is relevant.

\begin{question}
\label{que:six-el}
Is there a 6-element monoid distinct from $A_2^1$ and $B_2^1$ generating a finitely universal variety?
\end{question}

As we have mentioned above, the varieties $\mathbb B_2^1$ and $\mathbb A_2^1$ are non-finitely based. 
The following question is still open.

\begin{question}
\label{que:fb-fu}
What is the least order of a finitely based monoid that generates a finitely universal variety?
\end{question}

It is shown in \cite{Lee-Li-11} that every monoid of order six distinct from $A_2^1$ and $B_2^1$ generates a finitely based variety.
In view of this result, the affirmative answer to Question~\ref{que:six-el} provides a solution to Question \ref{que:fb-fu}.

\subsection{Lattice universal varieties}

Here we remind an open question from~\cite{Gusev-Lee-20} and~~\cite{Gusev-Lee-Vernikov-22}.
It follows from~\cite{Pudlak-Tuma-80} that a variety $\mathbb V$ is finitely universal if and only if for all sufficiently large $n \ge 1$, the lattice $\mathfrak{Eq}(\{1,2,\dots,n\})$ is anti-isomorphic to some sublattice of $\mathfrak L(\mathbb V)$.
In the present article, finitely universal varieties $\mathbb V$ of monoids are exhibited with the stronger property that for all sufficiently large $n \ge 1$, the lattice $\mathfrak{Eq}(\{1,2,\ldots,n\})$ is anti-isomorphic to some subinterval of $\mathfrak L(\mathbb V)$.
A yet even stronger property that a variety $\mathbb V$ can satisfy is when the lattice $\mathfrak{Eq}(\{1,2,3,\dots\})$ is anti-isomorphic to some subinterval of $\mathfrak L(\mathbb V)$; following~\cite{Shevrin-Vernikov-Volkov-09}, such a variety is said to be \textit{lattice universal}.
Lattice universal varieties of semigroups have been found in~\cite{Burris-Nelson-71a} and~\cite{Jezek-76}; it is natural to question if a variety of monoids can also satisfy this property.

\begin{question}[{\!\cite[Question~6.5]{Gusev-Lee-20}}; see also {\cite[Question~4.11b)]{Gusev-Lee-Vernikov-22}}]
\label{que:lu}
Is there a variety of monoids that is lattice universal?
\end{question}

Notice that, for locally finite varieties the answer to Question~\ref{que:lu} is negative.
This immediately follows from the following three folkloric facts: the subvariety lattice of an arbitrary locally finite variety is algebraic; the lattice $\mathfrak{Eq}(\{1,2,3,\dots\})$ is not coalgebraic; an interval of an algebraic lattice is again algebraic.

\subsection*{Acknowledgments.} The author thanks Edmond W.H. Lee for several comments and suggestions for improving the manuscript and many discussions.

\small

\end{document}